\DeclareMathOperator*{\re}{Re}
\DeclareMathOperator*{\im}{Im}
\DeclareMathOperator*{\tr}{Tr}
\DeclareMathOperator*{\res}{Res}
\journal{Journal of \LaTeX\ Templates}
\makeatletter \@addtoreset{equation}{section}
\newtheorem{thm}{Theorem}[section]
\newtheorem{prop}[thm]{Proposition}
\theoremstyle{definition}
\newtheorem{rem}[thm]{Remark}
\newtheorem{RHP}[thm]{RH Problem}
\renewcommand{\baselinestretch}{1.25}
\begin{document}

\begin{frontmatter}

\title{On long-time asymptotics to the nonlocal Lakshmanan -Porsezian-Daniel equation with step-like initial data \tnoteref{mytitlenote}}
\tnotetext[mytitlenote]{
Corresponding author.\\
\hspace*{3ex}\emph{E-mail addresses}: sftian@cumt.edu.cn,
shoufu2006@126.com (S. F. Tian) }

\author{Wen-yu zhou, Shou-Fu Tian$^{*}$ and Xiao-fan Zhang}
\address{School of Mathematics, China University of Mining and Technology,  Xuzhou 221116, People's Republic of China}

\begin{abstract}
In this work, the nonlinear steepest descent method is employed to study the long-time asymptotics of the integrable nonlocal Lakshmanan-Porsezian-Daniel (LPD) equation with a step-like initial data: $q_{0}(x)\rightarrow0$ as $x\rightarrow-\infty$ and $q_{0}(x)\rightarrow A$ as $x\rightarrow+\infty$, where $A$ is an arbitrary positive constant. Firstly, we develop a matrix Riemann-Hilbert (RH) problem to represent the Cauchy problem of LPD equation. To remove the influence of singularities in this RH problem, we introduce the Blaschke-Potapov (BP) factor, then the original RH problem can be transformed into a regular RH problem which can be solved by the parabolic cylinder functions. Besides, under the nonlocal condition with symmetries $x\rightarrow-x$ and $t\rightarrow t$, we give the asymptotic analyses at $x>0$ and $x<0$, respectively. Finally, we derive the long-time asymptotics of the solution $q(x,t)$ corresponding to the complex case of three stationary phase points generated by phase function.
\end{abstract}

\begin{keyword}
Nonlocal LPD equation, Step-like initial data,  Riemann-Hilbert Problem, Long-time asymptotics, Nonlinear steepest descent method.
\end{keyword}

\end{frontmatter}

\tableofcontents

\section{Introduction}
As we know, the research of nonlinear partial differential equation is a significant part of nonlinear science. Many physical phenomena can be characterized by nonlinear waves. Therefore, many experts and scholars have done a series of valuable works on these nonlinear systems. For example, some nonlinear equations such as the nonlinear Schr\"{o}dinger (NLS) equation \cite{Biondini-2014}, modified Korteweg-de Vries (mKdV) equation \cite{Ji-Zhu-2017} and Sasa-Satsuma equation \cite{Geng-2018} have many important applications. To further study the effects of higher-order perturbations, many modified and generalized NLS equations have drawn attention \cite{Clarkson-1990, Brizhik-2003, Tian-JDE-2017, Tian-PAMS-2018}. One of the integrable systems with higher-order dispersion and nonlinear terms is the Lakshmanan-Porsezian-Daniel (LPD) equation \cite{LPD-1988} takes the form as follows
\begin{align}\label{LPD}
q_t(x,t)+\frac{1}{2}iq_{xx}(x,t)-iq^2(x,t)q(x,t)-\gamma H[q(x,t)]=0,
\end{align}
with
\begin{align*}
H[q(x,t)]&=-iq_{xxxx}(x,t)+6iq(x,t)q_{x}^2(x,t)+4iq(x,t)q_{x}(x,t)q_{x}(x,t)\\
&+8iq^{2}(x,t)q_{xx}(x,t)+2iq^2(x,t)q_{xx}(x,t)-6iq^2(x,t)q^3(x,t),
\end{align*}
where $\gamma$ is an arbitrary positive real parameter and the subscripts represent partial differentiations.

Recently, integrable nonlocal equations have attracted extensive attention. Unlike classical local equations, the potential induced by the nonlinear term of the nonlocal equation is Parity-Time (PT) symmetry. It was first introduced by Ablowitz and Musslimani to study the nonlocal NLS equation
\begin{align}\label{NLS}
iq_t(x,t)=q_{xx}(x,t)\pm 2q(x,t)\overline{q}(-x,t)q(x,t)=0,
\end{align}
which possesses a Lax pair and a infinite number of conservation laws \cite{Ablowitz-Musslimani-2013}. $\overline{q}(-x,t)$ denotes the complex conjugation of $q(-x,t)$. The PT symmetry is a special reduction from the famous AKNS system as $r(x,t)=\overline{q}(-x,t)$. It is worth noting that the equation with PT symmetry is invariant under parity P and time reversal T. Its space reversal operation is defined by $x\rightarrow-x$, and its time reversal operation is defined by $t\rightarrow-t$ \cite{Bender-Boettcher-1998}. Compared with the classical local integrable equations, since the nonlocal nonlinear term $\overline{q}(-x,t)$ replaces $\overline{q}(x,t)$, there generate many important properties which have profound significances in mathematics, physics and classical optics. At present, many nonlocal equations have been proposed, such as nonlocal NLS equation \cite{Fokas-2016}, nonlocal mKdV equation \cite{Liang-Zhou-2017}, nonlocal Sine-Gordon equation \cite{Ablowitz-Feng-2018}, nonlocal Camassa-Holm equation \cite{Heredero-Reyes-2009} and so on.

In this work, we further study the integrable nonlocal LPD equation
\begin{align}\label{e1}
\left\{
     \begin{aligned}
&q_{t}(x,t)+\frac{1}{2}iq_{xx}(x,t)-iq^{2}(x,t)r(x,t)-\gamma H[q(x,t)]=0,& \quad &x\in\mathbb{R},t>0,\\
&q(x,0)=q_{0}(x),& \quad &x\in\mathbb{R},
     \end{aligned}
\right.
\end{align}
with
\begin{align*}
H[q(x,t)]&=-iq_{xxxx}(x,t)+6ir(x,t)q_{x}^2(x,t)+4iq(x,t)q_{x}(x,t)r_{x}(x,t)\\
&+8ir(x,t)q(x,t)q_{xx}(x,t)+2iq^2(x,t)r_{xx}(x,t)-6ir^2(x,t)q^3(x,t),
\end{align*}
where the $\gamma$ is defined same as above. In addition, we have $r(x,t)=\sigma \overline{q}(-x,t),\sigma=\pm1$. When $\sigma=1$, $r(x,t)=\overline{q}(-x,t)$ corresponds to the defocusing case. When $\sigma=-1$, $r(x,t)=-\overline{q}(-x,t)$ corresponds to the focusing case. We mainly pay close attention to the latter case.
Besides, we study the initial value problem for nonlocal LPD equation with a step-like initial data
\begin{align}\label{e2,3}
&q_{0}(x)\rightarrow 0, \quad x\rightarrow -\infty,\\
&q_{0}(x)\rightarrow A, \quad x\rightarrow +\infty,
\end{align}
where A is an arbitrary constant which is always positive and the solution $q(x,t)$ of equation \eqref{e1} also satisfies boundary conditions as follows, where $t$ is a positive value
\begin{subequations}\label{e4}
\begin{align}
&q(x,t)=o(1),  \quad\quad x\rightarrow -\infty, \label{e4a}\\
&q(x,t)=A+o(1),   \quad x\rightarrow +\infty. \label{e4b}
\end{align}
\end{subequations}

The LPD equation was first proposed by Lakshmanan, Porsezian and Daniel \cite{LPD-1988}. They considered the one-dimensional classical Heisenberg ferromagnetic spin system and applied a multiple-scaling method to find the perturbed soliton solution of the non-integrable case. Besides, it can be a model to describe the propagation and interaction of the ultrashort pulses in high-speed optical fiber transmission system and has many other extensive applications. Recently, many scolars promote it and discover a lot of interesting properties and phenomena. The Lax pair and an infinite number of conservation laws have been given \cite{Zhang-2009}. At the same time, the integrability was proved and the multi-soliton solutions were generated in a recursive manner by using the Darboux transformation. The breathers and rogue waves solutions for the LPD equation have been given \cite{Guo-2013,Wang-2012}. The dynamical behavior of the rational soliton solutions and self-potential function of integrable nonlocal LPD equation were obtained by using the degenerate Darboux \cite{Liu-2016}. Through using the Darboux transformations, the localized wave solutions of the nonlocal LPD equation have been studied \cite{Yang-2020}. the initial-boundary value problem of LPD equation on the half-line, which has the physical applications in Heisenberg ferromagnetic spin has been analyzed \cite{Wang-Liu-2020}. At the same time, the soliton solution obtained by inverse scattering transform (IST) has also been given \cite{Xun-2020}. The long-time asymptotic behavior of the LPD equation has been investigated through the nonlinear steepest descent analysis in the Schwartz space \cite{Wang-Liu-2022}. By the nonlinear steepest descent method of Deift and Zhou, the explicit long-time asymptotic formula of the nonlocal LPD equation has been derived \cite{Peng-Chen-2023}. The LPD equation has many applications in nonlinear optics and physics, so it has great research value and significance.

The study of asymptotic solutions to nonlinear dispersion equations is a hot topic. It was first brought into forcing with the IST method by Manakov \cite{Manakov-1974}. After that, Zakharov and Manakov gave the large-time asymptotic solutions of the NLS equation with decaying initial value by this method \cite{Zakharov-Manakov-1976}. Subsequently, Deift and Zhou, inspired by this work, developed a nonlinear steepest descent method to obtain the long-time asymptotic behaviors of the solution for the mKdV equation through simplified the original Riemann-Hilbert (RH) problem to a model that the solution can be calculated by parabolic cylinder functions \cite{Deift-Zhou-1993}. Through simplifying the original oscillation RH problem into the form which can be solved by some deformations, then using the nonlinear steepest descent method, we can analyze the long-time asymptotic behavior of the solution to the integrable equation. There are many properties about the large-time asymptotic behavior by this method were obtained \cite{Xu-Fan-2015,Rybalko-2019}.

As we know, the higher-order NLS equations with non-zero boundary conditions have been studied \cite{Chen-2019}. Since the LPD equation is a special case of the fifth-order NLS equation, its results can be obtained by simplification.
The long-time asymptotics for the nonlocal NLS equation with decaying boundary conditions and step-like initial data have been studied \cite{Rybalko-JMP-2019,Rybalko-JDE-2021}. Moreover, they also present the long-time asymptotics for a one-parameter family curved wedges and a family of nonlocal NLS equation with step-like initial data \cite{Rybalko-SAPM-2021,Rybalko-CMP-2021,Rybalko-JMPAG-2021}.
In addition, the long-time asymptotics for the nonlocal mKdV equation with step-like initial data have been investigated \cite{Xu-Fan-2023}.
The mKdV equation \cite{Minakov-2011,Grava-Minakov-2020,Liu-2021,Xu-Fan-2022}, Camassa-Holm equation \cite{Minakov-2015, Yang-Fan-2022, Minakov-2016} and DNLS equation with step-like initial data have been researched \cite{Xu-Fan-2013}.

It can be seen that there are many literatures study the asymptotic behaviors of integrable systems, especially for equations with step-like type initial values. However, there are relatively little researches about the asymptotic solutions of nonlocal equations with step-like initial data. We will conduct our research on the basis of literatures \cite{Wang-Liu-2022,Peng-Chen-2023}.
In our work, we employ the nonlinear steepest decent method to study the long-time behavior of the nonlocal LPD equation with step-like initial data.
It should be noted that the LPD equation with step-like initial value conditions which are not decaying as $x\rightarrow\infty$ and will produce singularity at the point $\xi=0$. Besides, for the nonlocal LPD equation, we need to consider its special symmetry and the impact on the construction of asymptotic solutions. Moreover, because the phase function $\theta(\xi,\mu)$ of nonlocal LPD equation has a high-order about $\xi$, which corresponding to the complex case of three stationary phase points generated by phase function. This will bring certain difficulties to the analysis process and make the construction of asymptotic solutions more complex.\\

The frame of the work is arranged as: In section 2, We have performed spectral analysis on the nonlocal LPD equation and give the analytic, symmetric and asymptotic properties of the eigenfunctions and scattering data. In addition to this, we mention the special case of the scattering matrix under pure-step initial data condition and construct the RH problem.
In section 3, Through the nonlinear steepest descent method, we construct the long-time asymptotics of $q(x,t)$. In section 3.1, we decompose the jump matrix $J(x,t,\xi)$ into the matrices consist by the upper triangle and lower triangle. In section 3.2, we perform the second RH deformation to transform the contour and make the jump matrices decline to identity $I$ for the large-$t$. In section 3.3, we introduce the BP factor to transform the RH problem into a regular RH problem. Then the rough estimate about $q(x,t)$ is obtained. In section 3.4, we give the local models near the saddle points and solve them by the parabolic cylinder functions. In section 3.5, by the Beals-coifman theory, we gain the error analysis of regular RH problem. Subsequently, the long-time asymptotics of the solutions of LPD equation at cases $x>0$ and $x<0$ are attained, respectively.

\section{Spectral analysis and the RH problem}
This section aims to conduct spectral analyses of eigenfunctions and scattering data, which involve in analytic, symmetric and asymptotic properties.

To derive the nonlocal LPD equation \eqref{e1}, we consider the following Lax pair
\begin{align}\label{q1}
\left\{
     \begin{aligned}
     \phi_{x}&=M\phi, \quad M=-i\xi\sigma_{3}+Q,\\
     \phi_{t}&=N\phi, \quad N=i\xi^2\sigma_{3}-\xi Q+\frac{1}{2}V+\gamma V_{P},
     \end{aligned}
\right.
\end{align}
with
\begin{equation}
\hspace{0.5cm}Q=\left(\begin{array}{cc}
0 & q(x,t)   \\
-\bar{q}(-x,t) & 0
\end{array}\right),\hspace{0.5cm}V=\left(\begin{array}{cc}
-iq(x,t)\bar{q}(-x,t) & -iq_{x}(x,t)  \\
-i\bar{q}_{x}(-x,t) &iq(x,t)\bar{q}(-x,t)
\end{array}\right),\nonumber
\end{equation}
\begin{equation}
\hspace{0.5cm}V_{P}=\left(\begin{array}{cc}
iA_{P}(x,t) & B_{P}(x,t)   \\
-C_{P}(x,t) & -iA_{P}(x,t)
\end{array}\right),\hspace{0.5cm}\sigma_{3}=\left(\begin{array}{cc}
1 & 0  \\
0 & -1
\end{array}\right),\nonumber
\end{equation}
\begin{equation}
\begin{split}
A_{P}(x,t)=&-8\xi^4+4\bar{q}(-x,t)q(x,t)\xi^2+2i\bar{q}(-x,t)q_{x}(x,t)\xi
+2iq(x,t)\bar{q}(-x,t)\xi\\
&-3q^2(x,t)\bar{q}^2(-x,t)+q_{x}(x,t)\bar{q}_{x}(-x,t)-q(x,t)\bar{q}(-x,t)
-\bar{q}(-x,t)q_{xx}(x,t),
\end{split}
\nonumber
\end{equation}
\begin{equation}
\begin{split}
B_{P}(x,t)= &8q(x,t)\xi^3+4iq_{x}(x,t)\xi^2-2q_{xx}(x,t)\xi-4\bar{q}(-x,t)q^2(x,t)
-iq_{xxx}(x,t)\\
&-6iq(x,t)\bar{q}(-x,t)q_{x}(x,t),
\end{split}
\nonumber
\end{equation}
\begin{equation}
\begin{split}
C_{P}(x,t)=&-8\bar{q}(-x,t)\xi^3-4i\bar{q}_{x}(-x,t)\xi^2-2\bar{q}_{xx}(-x,t)\xi
-4\bar{q}^2(-x,t)q(x,t)\xi\\
&+i\bar{q}_{xxx}(-x,t)+6i\bar{q}(-x,t)\bar{q}_{x}(-x,t)q(x,t),
\end{split}
\nonumber
\end{equation}
where $\phi(x,t,\xi)$ is a $2\times2$ matrix-valued function, the potential function $q(x,t)$ is a complex function and $\xi\in\mathbb{C}$ is a spectra parameter. According to the compatibility condition $M_{t}-N_{x}+[M,N]=0$, we can get the LPD equation \eqref{e1}.

Considering the asymptotic spectral problem as $x\rightarrow\pm\infty$ of Lax pair \eqref{q1}, we have
\begin{align}\label{q2}
\left\{
     \begin{aligned}
     \phi_{\pm x}&=M_{\pm}(\xi)\phi_{\pm}, \quad M_{\pm}(\xi)=\displaystyle\lim_{x\rightarrow\pm\infty}M(x,t,\xi)
     =-i\xi\sigma_{3}+Q_{\pm},\\
     \phi_{\pm t}&=N_{\pm}(\xi)\phi_{\pm}, \quad N_{\pm}(\xi)=\displaystyle\lim_{x\rightarrow\pm\infty}N(x,t,\xi)
     =(-\xi+8\xi^3\gamma)M_{\pm}(\xi),
     \end{aligned}
\right.
\end{align}
with
\begin{equation}\label{q3}
\hspace{0.5cm} Q_{+}=\left(\begin{array}{cc}
0 & A   \\
0 & 0
\end{array}\right),\hspace{0.5cm}\ Q_{-}=\left(\begin{array}{cc}
0 & 0  \\
-A & 0
\end{array}\right).
\end{equation}
Then the Jost eigenfunctions $\phi_{\pm}(x,t,\xi)$ are defined as follows
\begin{equation}\label{q4}
\phi_{\pm}(x,t,\xi)\rightarrow
L_{\pm}(\xi)e^{-i\xi\sigma_{3}x+i(\xi^2-8\xi^4\gamma)\sigma_{3}t},
\quad x\rightarrow\pm\infty,
\end{equation}
where $L_{\pm}$ are matrices to make $M_{\pm}$ satisfy the following diagonalization
\begin{equation}\label{q5}
\hspace{0.5cm} L_{+}(\xi)=\left(\begin{array}{cc}
1 & \frac{A}{2i\xi}   \\
0 & 1
\end{array}\right),\hspace{0.5cm}\ L_{-}(\xi)=\left(\begin{array}{cc}
1 & 0  \\
\frac{A}{2i\xi} & 1
\end{array}\right),
\end{equation}
\begin{equation}\label{q6}
M_{\pm}L_{\pm}=L_{\pm}(-i\xi\sigma_{3}),
\quad N_{\pm}L_{\pm}=L_{\pm}(i\xi^2\sigma_{3}-8i\xi^4\gamma\sigma_{3}).
\end{equation}

Next, we consider the new matrix spectral functions $\psi_{\pm}(x,t,\xi)$
\begin{equation}\label{q07}
\phi_{\pm}(x,t,\xi)=\psi_{\pm}(x,t,\xi)e^{-i\xi\sigma_{3}x+i(\xi^2-8\xi^4\gamma)\sigma_{3}t},
\end{equation}
and
\begin{equation}\label{q8}
\psi_{\pm}(x,t,\xi)\rightarrow L_{\pm}(\xi), \quad x\rightarrow\pm\infty.
\end{equation}

Then we introduce the equivalent lax pair to \eqref{q1}
\begin{align}\label{q9}
\left\{
     \begin{aligned}
     (&L_{\pm}^{-1}\psi_{\pm})_{x}-i\xi[L_{\pm}^{-1}\psi_{\pm},\sigma_{3}]
     =L_{\pm}^{-1}\Delta Q_{\pm}\psi_{\pm},\\
     (&L_{\pm}^{-1}\psi_{\pm})_{t}+(i\xi^2-8i\xi^4\gamma)[L_{\pm}^{-1}\psi_{\pm},\sigma_{3}]
     =L_{\pm}^{-1}\Delta U_{\pm}\psi_{\pm},
     \end{aligned}
\right.
\end{align}
where \ $U=N-(i\xi^2\sigma_{3}-8i\xi^{4}\gamma\sigma_{3})$, \ $\Delta X_{\pm}=X-X_{\pm}$, \ $[\wedge,\sigma_{3}]=\wedge\sigma_{3}-\sigma_{3}\wedge$.

By choosing particular paths, both $\psi_{-}(x,t,\xi)$ and $\psi_{+}(x,t,\xi)$ can be uniquely determined by the following Volterra integral equations
\begin{align}\label{q10}
\left\{
     \begin{aligned}
     \psi_{-}(x,t,\xi)&=L_{-}(\xi)+\int_{-\infty}^{x}G_{-}(x,y,t,\xi)
     (Q(y,t)-Q_{-}(y,t))\psi_{-}(x,t,\xi)e^{i\xi(x-y)\sigma_{3}}dy,\\
     \psi_{+}(x,t,\xi)&=L_{+}(\xi)+\int_{\infty}^{x}G_{+}(x,y,t,\xi)
     (Q(y,t)-Q_{+}(y,t))\psi_{+}(x,t,\xi)e^{i\xi(x-y)\sigma_{3}}dy,
     \end{aligned}
\right.
\end{align}
where $G_{\pm}(x,y,t,\xi)=\phi_{\pm}(x,t,\xi)[\phi_{\pm}(y,t,\xi)]^{-1}
=L_{\pm}(x,t,\xi)e^{-i\xi(x-y)\sigma_{3}}L_{\pm}^{-1}(y,t,\xi)$.

Since $\phi_{\pm}(x,t,\xi)$ are the solutions of lax pair \eqref{q1}, which are systems of first-order linear homogeneous equation. There exists a matrix $S(\xi)$ independent of variable $x$ and $t$
\begin{align}\label{q11}
\phi_{-}(x,t,\xi)=\phi_{+}(x,t,\xi)S(\xi),\quad \xi\in \mathbb{R}\backslash{\left\{0\right\}}.
\end{align}
Substitute \eqref{q07} into \eqref{q11}
\begin{align}\label{q12}
\psi_{-}(x,t,\xi)=\psi_{+}(x,t,\xi)e^{-i\theta\sigma_{3}}S(\xi)e^{i\theta\sigma_{3}},\quad \xi\in \mathbb{R}\backslash{\left\{0\right\}},
\end{align}
where $\theta=\xi x-(\xi^{2}-8\xi^{4}\gamma)t)$ and the matrix is defined as follows
\begin{align}\label{q13}
\hspace{0.5cm} S(\xi)=\left(\begin{array}{cc}
s_{11}(\xi) & s_{12}(\xi)   \\
s_{21}(\xi) & s_{22}(\xi)
\end{array}\right).
\end{align}

\begin{prop}\label{pr1}
Matrices $\psi_{\pm}(x,t,\xi)$ and scattering data of $S(\xi)$ satisfy the following symmetry relations
\begin{enumerate}[(i)]
\item With regard to $\psi_{\pm}(x,t,\xi)$,
\begin{align}\label{q14}
\sigma_{1}\overline{\psi_{-}(-x,t,-\overline{\xi})}\sigma_{1}=\psi_{+}(x,t,\xi),\quad \xi\in\mathbb{R}\backslash{\left\{0\right\}}.
\end{align}
\item With regard to $s_{ij}(\xi)$, $i,j=1,2$,
\begin{align}\label{q15}
\begin{split}
s_{ij}(\xi)&=\overline{s_{ij}(-\overline{\xi})},\quad i=j, \\
s_{ij}(\xi)&=-\overline{s_{ij}(-\overline{\xi})},\quad i\neq j,
\end{split}
\end{align}
\end{enumerate}
where
$\sigma_{1}=\left(
          \begin{matrix}
            0 & 1\\
            1 & 0\\
          \end{matrix}
          \right)$.
\end{prop}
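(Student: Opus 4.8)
The plan is to obtain both (i) and (ii) from a single involutive symmetry of the Lax pair \eqref{q1}, transporting it first to the Jost eigenfunctions and then to the scattering matrix. I would introduce the involution $\mathcal{T}[\phi](x,t,\xi):=\sigma_{1}\overline{\phi(-x,t,-\overline{\xi})}\sigma_{1}$ and verify that it maps solutions of \eqref{q1} to solutions. The engine of the computation is the single identity $\sigma_{1}\sigma_{3}\sigma_{1}=-\sigma_{3}$ together with the behaviour of $Q$, $V$, $V_{P}$ under $(x,\xi)\mapsto(-x,-\overline{\xi})$ followed by conjugation and $\sigma_{1}$-sandwiching. The off-diagonal entries of $Q$, namely $q(x,t)$ and $-\overline{q}(-x,t)$, are precisely arranged so that $Q(-x,t)$ returns to the correct structure after conjugating and swapping rows and columns; I would check
\[
\sigma_{1}\overline{M(-x,t,-\overline{\xi})}\sigma_{1}=-M(x,t,\xi),
\]
and the analogous identity for $N$ using the explicit polynomial-in-$\xi$ entries of $V$ and $V_{P}$. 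The extra overall sign is exactly what the chain-rule factor $\partial_x(-x)=-1$ needs, so that $\mathcal{T}[\phi]$ solves the same $x$- and $t$-equations as $\phi$.

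For part (i) I would argue by uniqueness. Both $\sigma_{1}\overline{\psi_{-}(-x,t,-\overline{\xi})}\sigma_{1}$ and $\psi_{+}(x,t,\xi)$ satisfy the $+$ Volterra equation \eqref{q10}, so it suffices to match their normalizations as $x\to+\infty$. Since $-x\to-\infty$ there, $\mathcal{T}$ applied to $\phi_-$ inherits the clean limit of $\phi_-$ at $-\infty$; the oscillatory factor is invariant because, for $\xi\in\mathbb{R}$, the simultaneous reflection $x\mapsto-x,\ \xi\mapsto-\xi$ fixes $e^{-i\xi\sigma_{3}x+i(\xi^{2}-8\xi^{4}\gamma)\sigma_{3}t}$ while conjugation combined with $\sigma_{1}\sigma_{3}\sigma_{1}=-\sigma_{3}$ undoes itself; and the constant factors match by the direct check $\sigma_{1}\overline{L_{-}(-\overline{\xi})}\sigma_{1}=L_{+}(\xi)$ from \eqref{q5}. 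Uniqueness of the Volterra solution then gives \eqref{q14}.

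For part (ii) I would lift (i) to the $\phi$'s and insert it into the scattering relation \eqref{q11}. Because $\mathcal{T}$ is an involution ($\sigma_{1}^{2}=I$ and the double reflection/conjugation is trivial), (i) yields both $\mathcal{T}[\phi_{-}]=\phi_{+}$ and $\mathcal{T}[\phi_{+}]=\phi_{-}$. Writing $\phi_{-}=\phi_{+}S$, replacing $(x,\xi)\mapsto(-x,-\overline{\xi})$, conjugating and sandwiching by $\sigma_{1}$, and using these two identities produces a matrix identity for $S(\xi)$ in terms of $\sigma_{1}\overline{S(-\overline{\xi})}\sigma_{1}$. Combining this with $\det S\equiv1$ (which follows from $\operatorname{tr}M=0$ and $\det L_{\pm}=1$, so that $\det\phi_{\pm}=1$ by Abel's identity) and reading off the diagonal and off-diagonal entries gives the componentwise relations \eqref{q15}.

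The genuinely delicate bookkeeping, and where I expect the main obstacle, is twofold. First, one must confirm that the $t$-operator $N$ enjoys the same involution as $M$, so that the \emph{full} Jost solution—not merely the $x$-equation—is preserved; this requires tracking how $\xi\mapsto-\overline{\xi}$ acts on the even and odd powers of $\xi$ appearing in $A_{P},B_{P},C_{P}$ and checking the sign of each against $\sigma_{1}\sigma_{3}\sigma_{1}=-\sigma_{3}$. Second, the three sign flips (reflection, conjugation, $\sigma_{1}$-conjugation) in the oscillatory exponentials and in the constant matrices must be shown to cancel, which is the step most prone to error; I would organize everything around the parity of each power of $\xi$ and the single relation $\sigma_{1}\sigma_{3}\sigma_{1}=-\sigma_{3}$. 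A final point of care is the restriction to $\xi\in\mathbb{R}\backslash\{0\}$, which reflects the pole of $L_{\pm}(\xi)$ and of the scattering relation at $\xi=0$.
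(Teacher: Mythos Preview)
Your proposal is correct and follows essentially the same route as the paper: verify the involution on the Lax pair via $\sigma_{1}\overline{Q(-x,t)}\sigma_{1}=-Q(x,t)$, transport it to the Jost functions by matching boundary behaviour, and then read off the $S$-matrix symmetry $\sigma_{1}\overline{S(-\overline{\xi})^{-1}}\sigma_{1}=S(\xi)$ from the scattering relation together with $\det S=1$. Your write-up is considerably more explicit than the paper's (which simply asserts that the verification is easy), in particular your care about the $t$-part $N$ and the normalization check $\sigma_{1}\overline{L_{-}(-\overline{\xi})}\sigma_{1}=L_{+}(\xi)$, but the underlying argument is the same.
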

\begin{proof}
 About $(i)$, using Lax pair \eqref{q9} and relation $\sigma_{1}\overline{Q(-x,t)}\sigma_{1}=-Q(x,t)$ we can easily verify that matrices $\psi_{\pm}(x,t,\xi)$ satisfy the symmetry relation $\sigma_{1}\overline{\psi_{-}(-x,t,-\overline{\xi})}\sigma_{1}=\psi_{+}(x,t,\xi)$.

About $(ii)$, using relation \eqref{q07}, the symmetry relation of $\phi_{\pm}(x,t,\xi)$ can be easily acquired
\begin{align}\label{q16}
\sigma_{1}\overline{\phi_{-}(-x,t,-\overline{\xi})}\sigma_{1}=\phi_{+}(x,t,\xi),\quad \xi\in\mathbb{R}\backslash{\left\{0\right\}}.
\end{align}
Then using \eqref{q11}, we get the symmetry relation of matrix $S(\xi)$
\begin{align}\label{q17}
\sigma_{1}\overline{{S(-\overline{\xi})}^{-1}}\sigma_{1}=S(\xi), \quad \xi\in\mathbb{R}\backslash{\left\{0\right\}}.
\end{align}

Based on \eqref{q17}, we have the symmetry relations $s_{11}(\xi)=\overline{s_{11}(-\overline{\xi})}$, $s_{22}(\xi)=\overline{s_{22}(-\overline{\xi})}$ and $s_{21}(\xi)=-\overline{s_{12}(-\overline{\xi})}$.
Then we redefine the matrix $S(\xi)$ as follows
\begin{align}\label{q18}
S(\xi)=\left(
          \begin{matrix}
            a_{1}(\xi) & b(\xi)\\
            -\overline{b(-\overline{\xi})} & a_{2}(\xi)\\
          \end{matrix}
          \right),\quad \xi\in\mathbb{R}\backslash{\left\{0\right\}}.
\end{align}
\end{proof}

\begin{prop}\label{pr2}
Matrices $\psi_{\pm}(x,t,\xi)$ and scattering data of $S(\xi)$ satisfy the following analytic relations
\begin{enumerate}[(i)]
\item $\psi_{-}^{(1)}$ and $\psi_{+}^{(2)}$ are analytic in $\xi\in\mathbb{C}^{+}$ and continuous in $\overline{\mathbb{C}^{+}}\backslash{\left\{0\right\}}$; $\psi_{-}^{(2)}$ and $\psi_{+}^{(1)}$ are analytic in $\xi\in\mathbb{C}^{-}$ and continuous in $\overline{\mathbb{C}^{-}}$.
\item $a_{1}(\xi)$ is analytic in $\xi\in\mathbb{C}^{+}$ and continuous in $\overline{\mathbb{C}^{+}}\backslash{\left\{0\right\}}$; $a_{2}(\xi)$ is analytic in $\xi\in\mathbb{C}^{-}$ and continuous in $\overline{\mathbb{C}^{-}}$; $b(\xi)$ is continuous in $\xi\in\mathbb{R}$.
\end{enumerate}
where $\mathbb{C}^{+}=\left\{{\xi\in\mathbb{C}\mid Im\xi>0}\right\}$ and $\mathbb{C}^{-}=\left\{{\xi\in\mathbb{C}\mid Im\xi<0}\right\}$ stand for the upper half plane and the lower half plane of the complex plane, respectively. $\psi_{\pm}^{(k)}$ denotes the $k$-th column of $\psi_{\pm}$.
\end{prop}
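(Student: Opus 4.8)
The plan is to read off claim (i) directly from the Volterra integral equations \eqref{q10}, decomposed column by column, and then to propagate the conclusions to the scattering coefficients in claim (ii) through Wronskian representations. First I would diagonalize the kernel: writing $G_{\pm}$ with its conjugating factor $e^{-i\xi(x-y)\sigma_{3}}(\,\cdot\,)e^{i\xi(x-y)\sigma_{3}}$, a short computation shows that in the column equations the only $\xi$-dependence that can grow is a single scalar exponential, namely $e^{2i\xi(x-y)}$ in the first-column equation and $e^{-2i\xi(x-y)}$ in the second-column equation. The decisive input is the sign of $x-y$ fixed by the domain of integration: for $\psi_{-}$ one integrates over $(-\infty,x]$ so $x-y\ge0$, whereas for $\psi_{+}$ one integrates over $[x,+\infty)$ so $x-y\le0$. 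Since $|e^{2i\xi(x-y)}|=e^{-2(\mathrm{Im}\,\xi)(x-y)}$, combining this with the sign constraints shows that $\psi_{-}^{(1)}$ and $\psi_{+}^{(2)}$ have bounded kernels precisely in $\mathbb{C}^{+}$, while $\psi_{-}^{(2)}$ and $\psi_{+}^{(1)}$ have bounded kernels precisely in $\mathbb{C}^{-}$.

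With the exponential bounded, I would solve each equation by successive approximations. Using the step-like hypothesis — $Q-Q_{-}$ integrable at $-\infty$ and $Q-Q_{+}$ integrable at $+\infty$ — together with the uniform bound on the kernel, the resulting Neumann series converges absolutely and uniformly on compact subsets of the relevant half-plane; as each term is analytic in $\xi$, the uniform-limit theorem gives analyticity of the sum, which is claim (i). Continuity up to $\mathbb{R}$ follows by dominated convergence, with one caveat: the normalizations $L_{\pm}$ in \eqref{q5} carry the factor $A/(2i\xi)$, singular at $\xi=0$. This factor sits in the lower entry of $L_{-}^{(1)}$ and the upper entry of $L_{+}^{(2)}$ — exactly the two columns analytic in $\mathbb{C}^{+}$ — so only $\psi_{-}^{(1)}$ and $\psi_{+}^{(2)}$ inherit the pole, which forces the exclusion of $\{0\}$ from their continuity set, whereas $\psi_{-}^{(2)}$ and $\psi_{+}^{(1)}$ have regular $L_{\pm}$ entries and stay continuous on all of $\overline{\mathbb{C}^{-}}$.

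For claim (ii) I would first note that $M=-i\xi\sigma_{3}+Q$ is trace-free, so by Abel's identity $\det\psi_{\pm}\equiv\det L_{\pm}=1$. Rewriting \eqref{q11} column-wise with $S(\xi)$ as in \eqref{q18} and taking $2\times2$ determinants then expresses the scattering entries as Wronskians of Jost columns, with the exponential prefactors of \eqref{q07} cancelling between a first and a second column: $a_{1}(\xi)=\det\!\big(\psi_{-}^{(1)},\psi_{+}^{(2)}\big)$, $a_{2}(\xi)=\det\!\big(\psi_{+}^{(1)},\psi_{-}^{(2)}\big)$, and $b(\xi)=\det\!\big(\psi_{-}^{(2)},\psi_{+}^{(2)}\big)$. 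Claim (i) now propagates immediately: $a_{1}$ pairs two $\mathbb{C}^{+}$-analytic columns and is therefore analytic in $\mathbb{C}^{+}$ and continuous in $\overline{\mathbb{C}^{+}}\backslash\{0\}$, $a_{2}$ pairs two $\mathbb{C}^{-}$-analytic columns and is analytic in $\mathbb{C}^{-}$ and continuous in $\overline{\mathbb{C}^{-}}$, while $b$ pairs columns analytic in opposite half-planes, so it has no common domain of analyticity and is merely continuous on $\mathbb{R}$.

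The main obstacle I anticipate is not the exponential bookkeeping but the analysis near $\xi=0$: establishing uniform convergence of the Neumann series on the semi-infinite domains under merely step-like, non-decaying data, and tracking precisely how the $A/(2i\xi)$ singularity of $L_{\pm}$ filters through the integral equations into the individual columns and then into $a_{1}$. Controlling the boundary values as $\xi\to0$, and confirming that the excluded set is genuinely $\{0\}$ and nothing larger, is where the step-like structure makes the estimates more delicate than in the classical decaying case.
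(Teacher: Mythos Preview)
Your proposal is correct and follows essentially the same route as the paper, only with far more detail: the paper's proof of (i) is the single sentence ``using Volterra integral \eqref{q10}, the analytical relations of $\psi_{\pm}(x,t,\xi)$ can be easily obtained,'' and for (ii) it invokes $\tr M=\tr N=0$ with Abel's formula to get $\det\psi_{\pm}=1$ and then the Wronskian representations \eqref{q20}, exactly as you do. The one cosmetic difference is the formula for $b$: the paper writes $b(\xi)=\mathrm{Wr}\big(\psi_{+}^{(1)},\psi_{-}^{(1)}\big)$, whereas you use $\det\big(\psi_{-}^{(2)},\psi_{+}^{(2)}\big)$; both pair a $\mathbb{C}^{-}$-column with a $\mathbb{C}^{+}$-column and lead to the same continuity conclusion on the real line (away from $0$).
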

\begin{proof}
About $(i)$, using Volterra integral \eqref{q10}, the analytical relations of $\psi_{\pm}(x,t,\xi)$ can be easily obtained.

About $(ii)$, according to the Lax pair \eqref{q1} and Abel formula, we have $\tr(M)=\tr(N)=0$, then we can easily verify $(\det\psi_{\pm})_{x}=(\det\psi_{\pm})_{t}=0$. Therefore $\det(\psi_{\pm})$ have nothing to do with variables $x$ and $t$, which means $\det(\phi_{\pm})=\det(\psi_{\pm})=1$. From \eqref{q11}, we also have $\det S(\xi)=1$.

By \eqref{q12}, we have the wronskian representations of the scattering coefficients $a_{1}(\xi)$, $a_{2}(\xi)$ and $b(\xi)$
\begin{align}\label{q20}
\begin{split}
\left\{
     \begin{aligned}
     a_{1}(\xi)&=Wr(\psi_{-}^{(1)}(0,0,\xi),\psi_{+}^{(2)}(0,0,\xi)),\quad \xi\in \overline{\mathbb{C}^{+}}\backslash{\left\{0\right\}},\\
     a_{2}(\xi)&=Wr(\psi_{+}^{(1)}(0,0,\xi),\psi_{-}^{(2)}(0,0,\xi)),\quad \xi\in \overline{\mathbb{C}^{-}},\\
     b(\xi)&=Wr(\psi_{+}^{(1)}(0,0,\xi),\psi_{-}^{(1)}(0,0,\xi)),\quad \xi\in \mathbb{R},
     \end{aligned}
\right.
\end{split}
\end{align}
according to the analytical relations of $\psi_{\pm}(x,t,\xi)$, we get the item $(ii)$.
\end{proof}

\begin{prop}\label{pr3}
Matrices $\psi_{\pm}(x,t,\xi)$ and scattering data of $S(\xi)$ satisfy the following asymptotic properties
\begin{enumerate}[(i)]
\item As $\xi\rightarrow\infty$,
\begin{align}\label{q21}
\begin{split}
\left\{
     \begin{aligned}
     \psi_{-}^{(1)}(x,t,\xi)=
     \left(
     \begin{matrix}
     1\\
     0
     \end{matrix}
     \right)
     +O(\xi^{-1}),\quad
     \psi_{+}^{(2)}(x,t,\xi)=
     \left(
     \begin{matrix}
     0\\
     1
     \end{matrix}
     \right)
     +O(\xi^{-1}),\quad \xi\in\mathbb{C}^{+},\\
     \psi_{-}^{(2)}(x,t,\xi)=
     \left(
     \begin{matrix}
     0\\
     1
     \end{matrix}
     \right)
     +O(\xi^{-1}),\quad
     \psi_{+}^{(1)}(x,t,\xi)=
     \left(
     \begin{matrix}
     1\\
     0
     \end{matrix}
     \right)
     +O(\xi^{-1}),\quad \xi\in\mathbb{C}^{-},
     \end{aligned}
\right.
\end{split}
\end{align}
\vspace{-1mm}
\begin{align}\label{q22}
\begin{split}
\left\{
     \begin{aligned}
     a_{j}(\xi)&=1+O(\xi^{-1}),j=1,2,\quad \xi\in\overline{\mathbb{C}^{\pm}},\\
     b(\xi)&=O(\xi^{-1}),\quad \xi\in\mathbb{R}.
     \end{aligned}
\right.
\end{split}
\end{align}
\vspace{-1mm}
\item As $\xi\rightarrow0$,
\vspace{-1mm}
\begin{align}\label{q23}
\begin{split}
\left\{
     \begin{aligned}
     \psi_{-}^{(1)}(x,t,\xi)&=\frac{1}{\xi}
     \left(
     \begin{matrix}
     f_{1}(x,t)\\
     f_{2}(x,t)
     \end{matrix}
     \right)
     +O(1),\quad
     \psi_{-}^{(2)}(x,t,\xi)=\frac{2i}{A}
     \left(
     \begin{matrix}
     f_{1}(x,t)\\
     f_{2}(x,t)
     \end{matrix}
     \right)
     +O(\xi),\\
     \psi_{+}^{(1)}(x,t,\xi)&=-\frac{2i}{A}
     \left(
     \begin{matrix}
     \overline{f_{2}}(-x,t)\\
     \overline{f_{1}}(-x,t)
     \end{matrix}
     \right)
     +O(\xi),\quad
     \psi_{+}^{(2)}(x,t,\xi)=-\frac{1}{\xi}
     \left(
     \begin{matrix}
     \overline{f_{2}}(-x,t)\\
     \overline{f_{1}}(-x,t)
     \end{matrix}
     \right)
     +O(1),
     \end{aligned}
\right.
\end{split}
\end{align}
\vspace{-1mm}
\begin{align}\label{q24}
\begin{split}
\left\{
     \begin{aligned}
     a_{1}(\xi)&=\frac{A^{2}a_{2}(0)}{4\xi^{2}}+O(\xi^{-1}),\quad \xi\in\overline{\mathbb{C}^{+}},\\
     b(\xi)&=\frac{A a_{2}(0)}{2i\xi}+O(1),\quad \xi\in\mathbb{R},
     \end{aligned}
\right.
\end{split}
\end{align}
where $f_{1}(x,t)$, $f_{2}(x,t)$ can be solved by the following Volterra integral equations
\begin{align}\label{q25}
\begin{split}
\left\{
     \begin{aligned}
     f_{1}(x,t)&=\int_{-\infty}^{x}q(y,t)f_{2}(y,t)dy,\\
     f_{2}(x,t)&=\frac{A}{2i}+\int_{-\infty}^{x}(-\overline{q(-y,t)}+A)f_{1}(y,t)dy.
     \end{aligned}
\right.
\end{split}
\end{align}
\vspace{-2mm}
\end{enumerate}
\end{prop}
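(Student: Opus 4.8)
The statement splits into two independent regimes, $\xi\to\infty$ and $\xi\to0$, and in each regime the eigenfunction estimates feed directly into the scattering data through the Wronskian representations \eqref{q20}. The plan is therefore: first establish the column asymptotics of $\psi_{\pm}$ from the Volterra equations \eqref{q10}, and then read off $a_1,a_2,b$ by inserting those asymptotics into \eqref{q20} at $(x,t)=(0,0)$. The regime $\xi\to\infty$ is routine; the regime $\xi\to0$ is the substantive part, because the normalizing matrices $L_{\pm}(\xi)$ in \eqref{q5} carry a simple pole at $\xi=0$, which is exactly the source of the $\xi^{-1}$ and $\xi^{-2}$ singularities appearing in \eqref{q23}--\eqref{q24}.

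For (i), as $\xi\to\infty$ one has $L_{\pm}(\xi)\to I$ since the off-diagonal entry $A/(2i\xi)\to0$. Writing the Neumann series for \eqref{q10}, the leading term is $L_{\pm}(\xi)\to I$ and the integral term is $O(\xi^{-1})$: the kernel carries the oscillatory factor $e^{\pm i\xi(x-y)}$, and a single integration by parts against the smooth, integrable matrix $Q-Q_{\pm}$ produces the $\xi^{-1}$ gain. This gives \eqref{q21} columnwise, i.e. $\psi_{-}^{(1)},\psi_{+}^{(1)}\to(1,0)^{T}$ and $\psi_{-}^{(2)},\psi_{+}^{(2)}\to(0,1)^{T}$. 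Substituting into \eqref{q20}, $a_1=Wr(\psi_{-}^{(1)},\psi_{+}^{(2)})\to Wr((1,0)^{T},(0,1)^{T})=1$ and likewise $a_2\to1$, while $b=Wr(\psi_{+}^{(1)},\psi_{-}^{(1)})\to Wr((1,0)^{T},(1,0)^{T})=0$; the $O(\xi^{-1})$ corrections are inherited from the columns, yielding \eqref{q22}.

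For (ii), I would posit the Laurent ansatz $\psi_{-}^{(1)}(x,t,\xi)=\xi^{-1}v_{-1}(x,t)+v_0(x,t)+O(\xi)$, forced by the pole of $L_{-}^{(1)}(\xi)=(1,A/(2i\xi))^{T}$, substitute it into \eqref{q10}, and expand $L_{-}$, $L_{-}^{-1}$ and the exponentials about $\xi=0$. Matching the $\xi^{-1}$ coefficient reduces the Volterra equation to the closed coupled system \eqref{q25}, whose unique solution is $v_{-1}=(f_1,f_2)^{T}$; the boundary data $f_1(-\infty)=0$, $f_2(-\infty)=A/(2i)$ come from $\lim_{\xi\to0}\xi L_{-}^{(1)}(\xi)=(0,A/(2i))^{T}$. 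For the regular column $\psi_{-}^{(2)}$, the key observation is that $\xi L_{-}^{(1)}(\xi)$ and $(A/(2i))L_{-}^{(2)}(\xi)$ agree at $\xi=0$ (both equal $(0,A/(2i))^{T}$); uniqueness of the Volterra solution then propagates this proportionality to all $x$, giving $\psi_{-}^{(2)}(x,t,0)=(2i/A)(f_1,f_2)^{T}$ and hence the second line of \eqref{q23}. The $\psi_{+}$ formulas require no new work: applying the symmetry $\sigma_{1}\overline{\psi_{-}(-x,t,-\overline{\xi})}\sigma_{1}=\psi_{+}(x,t,\xi)$ of \eqref{q14} to the two $\psi_{-}$ expansions, and using that $A$ is real so that $\overline{2i/A}=-2i/A$ and $\overline{1/(-\overline{\xi})}=-1/\xi$, turns them into precisely the stated expansions for $\psi_{+}^{(1)}$ and $\psi_{+}^{(2)}$ with entries $\overline{f_1}(-x,t),\overline{f_2}(-x,t)$.

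Finally, I would obtain the scattering asymptotics \eqref{q24} by inserting the small-$\xi$ columns into \eqref{q20} at the origin. For $a_1=Wr(\psi_{-}^{(1)},\psi_{+}^{(2)})$ both arguments carry a $\xi^{-1}$ pole, so the determinant is $\xi^{-2}\bigl(|f_2(0,0)|^2-|f_1(0,0)|^2\bigr)+O(\xi^{-1})$; for $b=Wr(\psi_{+}^{(1)},\psi_{-}^{(1)})$ one argument is $O(1)$ and one has a simple pole, producing $-\tfrac{2i}{A\xi}\bigl(|f_2(0,0)|^2-|f_1(0,0)|^2\bigr)+O(1)$. To rewrite these in the stated form I would evaluate $a_2(0)=Wr(\psi_{+}^{(1)},\psi_{-}^{(2)})$ using the regular $\xi\to0$ limits of both columns, which gives $a_2(0)=\tfrac{4}{A^2}\bigl(|f_2(0,0)|^2-|f_1(0,0)|^2\bigr)$; substituting $|f_2(0,0)|^2-|f_1(0,0)|^2=\tfrac{A^2}{4}a_2(0)$ turns the two expressions into $a_1(\xi)=\tfrac{A^2a_2(0)}{4\xi^2}+O(\xi^{-1})$ and $b(\xi)=\tfrac{Aa_2(0)}{2i\xi}+O(1)$. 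The main obstacle is the small-$\xi$ expansion itself: because $L_{-}$, $L_{-}^{-1}$ and the Green kernel $G_{-}$ are each singular at $\xi=0$, extracting the finite residue vector $(f_1,f_2)^{T}$ requires tracking the cancellations between these singular factors and verifying that the two columns genuinely share this vector, and the $O(\cdot)$ remainders must be controlled uniformly in $x$ using the regularity of $q$ together with its non-decaying $x\to+\infty$ boundary behavior.
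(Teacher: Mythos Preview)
Your proposal is correct and follows essentially the same route as the paper: posit the Laurent ansatz for $\psi_{-}$ at $\xi=0$, substitute into the Volterra equation \eqref{q10} and match coefficients to obtain \eqref{q25}, observe that the leading vectors of the two columns of $\psi_{-}$ are proportional (you phrase this via uniqueness of the Volterra solution with proportional data, the paper phrases it by writing out both pairs of integral equations and noting they coincide up to the factor $2i/A$), transfer to $\psi_{+}$ via the symmetry \eqref{q14}, and then read off \eqref{q24} from the Wronskians \eqref{q20} after identifying $|f_{2}(0,0)|^{2}-|f_{1}(0,0)|^{2}=\tfrac{A^{2}}{4}a_{2}(0)$. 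For part (i) your Neumann-series/integration-by-parts justification is actually more informative than the paper's brief appeal to $\det\psi_{\pm}=1$, but the content is the same.
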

\begin{proof}
About $(i)$, because the determinant of $\psi_{\pm}$ is equal to one, we can get the columns of $\psi_{\pm}$ represented as \eqref{q21}. Then substituting \eqref{q21} into \eqref{q20}, we have \eqref{q22}.

About $(ii)$, tacking advantage of \eqref{q10}, we can assume
\begin{align}\label{q26}
\begin{split}
\left\{
     \begin{aligned}
     \psi_{-}^{(1)}(x,t,\xi)&=\frac{1}{\xi}
     \left(
     \begin{matrix}
     f_{1}(x,t)\\
     f_{2}(x,t)
     \end{matrix}
     \right)
     +O(1),\quad
     \psi_{-}^{(2)}(x,t,\xi)=
     \left(
     \begin{matrix}
     \tilde{f}_{1}(x,t)\\
     \tilde{f}_{2}(x,t)
     \end{matrix}
     \right)
     +O(\xi),\\
     \psi_{+}^{(1)}(x,t,\xi)&=
     \left(
     \begin{matrix}
     \tilde{g}_{1}(x,t)\\
     \tilde{g}_{2}(x,t)
     \end{matrix}
     \right)
     +O(\xi),\quad
     \psi_{+}^{(2)}(x,t,\xi)=\frac{1}{\xi}
     \left(
     \begin{matrix}
     g_{1}(x,t)\\
     g_{2}(x,t)
     \end{matrix}
     \right)
     +O(1),
     \end{aligned}
\right.
\end{split}
\end{align}
where some $f_{j}(x,t)$, $\tilde{f}_{j}(x,t)$, $g_{j}(x,t)$ and $\tilde{g}_{j}(x,t)$ $(j=1,2)$ are undetermined.

By the symmetry relation \eqref{q14}, we have
\begin{equation}\label{q27}
\left(\begin{array}{cc}
g_{1}(x,t)\\
g_{2}(x,t)
\end{array}\right)
=
\left(\begin{array}{cc}
-\overline{f_{2}}(-x,t)\\
-\overline{f_{1}}(-x,t)
\end{array}\right), \quad
\left(\begin{array}{cc}
\tilde{g}_{1}(x,t)\\
\tilde{g}_{2}(x,t)
\end{array}\right)
=
\left(\begin{array}{cc}
\overline{\tilde{f}_{2}}(-x,t)\\
\overline{\tilde{f}_{1}}(-x,t)
\end{array}\right).
\end{equation}

Submitting $\psi_{-}(x,t,\xi)=
\left(
\begin{matrix}
\frac{1}{\xi}f_{1}(x,t) & \tilde{f_{1}}(x,t)\\
\frac{1}{\xi}f_{2}(x,t) & \tilde{f_{2}}(x,t)
\end{matrix}
\right) $
into Volterra integral \eqref{q10} and letting $\xi\rightarrow0$, these undetermined equations take the forms
\begin{subequations}
\begin{align}
&f_{1}(x,t)=\int_{-\infty}^{x}q(y,t)f_{2}(y,t)dy,\label{q28a}\\
&\tilde{f_{1}}(x,t)=\int_{-\infty}^{x}q(y,t)\tilde{f_{2}}(y,t)dy,\label{q28b}\\
&f_{2}(x,t)=\frac{A}{2i}+
\int_{-\infty}^{x}(-\overline{q(-y,t)}+A)f_{1}(y,t)dy,\label{q28c}\\
&\tilde{f_{2}}(x,t)=1+
\int_{-\infty}^{x}(-\overline{q(-y,t)}+A)\tilde{f_{1}}(y,t)dy.\label{q28d}
\end{align}
\end{subequations}
Then we have
\begin{equation}\label{q29}
\left(\begin{array}{cc}
\tilde{f_{1}}(x,t)\\
\tilde{f_{2}}(x,t)
\end{array}\right)
=\frac{2i}{A}
\left(\begin{array}{cc}
f_{1}(x,t)\\
f_{2}(x,t)
\end{array}\right).
\end{equation}

Using the relations of \eqref{q29} and \eqref{q27}, the asymptotic properties of\eqref{q23} can be defined. Thus we can only use functions $f_{1}(x,t)$ and $f_{2}(x,t)$ to describe matrices $\psi_{\pm}(x,t,\xi)$.

It is going to be similar to $(i)$, where we submit \eqref{q23} into \eqref{q20}
\begin{subequations}
\begin{align}
a_{1}(\xi)&=\frac{1}{\xi^{2}}(\left|f_{2}(0,0)\right|^{2}-\left|f_{1}(0,0)\right|^{2})
+O(\xi^{-1}),\label{q30a}\\
a_{2}(\xi)&=\frac{4}{A^{2}}(\left|f_{2}(0,0)\right|^{2}-\left|f_{1}(0,0)\right|^{2})
+O(\xi),\label{q30b}\\
b(\xi)&=-\frac{2i}{\xi A}(\left|f_{2}(0,0)\right|^{2}-\left|f_{1}(0,0)\right|^{2})
+O(1).\label{q30c}
\end{align}
\end{subequations}
\end{proof}
\begin{rem}\label{re1}
In the case of pure-step initial data, that is, when
\begin{align}\label{q31}
q_{0}(x)=q_{0A}(x):=
\left\{
       \begin{aligned}
       0, \quad x<0,\\
       A, \quad x>0,
       \end{aligned}
\right.
\end{align}
the scattering matrix $S(\xi)$ can be expressed as follows
\begin{equation}\label{q32}
S(\xi)=[\phi_{+}(0,0,\xi)]^{-1}\phi_{-}(0,0,\xi)=
\left(\begin{array}{cc}
1+\frac{A^{2}}{4\xi^{2}} & -\frac{A}{2i\xi}\\
\frac{A}{2i\xi} & 1
\end{array}\right).
\end{equation}

It can be seen that in this case $a_{1}(\xi)$ has a single, simple zero $\xi=\frac{A}{2}i$ in the upper half-plane and $a_{2}(\xi)$ has no zeros in the lower half-plane.
\end{rem}

According to scattering relation \eqref{q12} and Proposition \ref{pr2}, the piece-wise meromorphic matrices can be defined as follows
\begin{align}\label{u1}
M(x,t;\xi)=\left\{ \begin{array}{ll}
M_{+}(x,t,\xi)=\left(  \frac{\psi_{-}^{(1)}(x,t;\xi)}{a_{1}(\xi)}, \psi_{+}^{(2)}(x,t;\xi)\right),   &\text{as } \xi\in \mathbb{C}^{+},\\[12pt]
M_{-}(x,t,\xi)=\left( \psi_{+}^{(1)}(x,t;\xi),\frac{\psi_{-}^{(2)}(x,t;\xi)}{a_{2}(\xi)}\right)  , &\text{as }\xi\in \mathbb{C}^{-},\\
\end{array}\right.
\end{align}
with
\begin{equation}\label{u2}
M_{+}(x,t,\xi)=M_{-}(x,t,\xi)J(x,t,\xi), \quad \xi\in\mathbb{R}\backslash{\left\{0\right\}},
\end{equation}
where the jump matrix
\begin{equation}\label{u3}
J(x,t,\xi)=
\left(\begin{array}{cc}
1+\frac{b(\xi)\overline{b(-\bar{\xi})}}{a_{1}(\xi)a_{2}(\xi)} & -\frac{b(\xi)}{a_{2}(\xi)}e^{-2i(\xi x-\xi^{2}t+8\xi^{4}\gamma t)}\\
-\frac{\overline{b(-\bar{\xi})}}{a_{1}(\xi)}e^{2i(\xi x-\xi^{2}t+8\xi^{4}\gamma t)} & 1
\end{array}\right).
\end{equation}
Now we define
\begin{equation}\label{r1}
r_{1}(\xi)=\frac{\overline{b(-\bar{\xi})}}{a_{1}(\xi)},\quad
r_{2}(\xi)=\frac{b(\xi)}{a_{2}(\xi)},
\end{equation}
then we have $1+r_{1}(\xi)r_{2}(\xi)=\frac{1}{a_{1}(\xi)a_{2}(\xi)}$ for $\xi\in\mathbb{R}\backslash\left\{0\right\}$. From the symmetry relations of $a_{j}(\xi)$, $(j=1,2)$ and $b(\xi)$, we have $\overline{r_{1}(-\bar{\xi})}=r_{1}(\xi)$ and $\overline{r_{2}(-\bar{\xi})}=r_{2}(\xi)$.

Looking back at \eqref{q24}, when $\xi\rightarrow0$, the different behaviors of the two cases $a_{2}(0)=0$ and $a_{2}(0)\neq 0$ make $P(x,t,\xi)$ qualitatively different. The case $a_{2}(0)\neq 0$ contains pure-step initial data in remark \ref{re1}, where $a_{1}(\xi)$ has a single, simple zero located on the imaginary axis in $\mathbb{C}^{+}$, and $a_{2}(\xi)$ has no zero in $\mathbb{C}^{-}$. Since small (in the $L^{1}$ norm) perturbations of the pure-step initial data preserve these properties, we will concentrate on the following two cases

Case1: $a_{1}(\xi)$ has a simple and pure imaginary zero in $\overline{\mathbb{C}^{+}}\backslash{\left\{0\right\}}$ at $\xi=i\xi_{1}$ with $\xi_{1}>0$ and $a_{2}(\xi)$ has no zero in $\overline{\mathbb{C}^{-}}$.

Case2: $a_{1}(\xi)$ has a simple and pure imaginary zero in $\overline{\mathbb{C}^{+}}\backslash{\left\{0\right\}}$ at $\xi=i\xi_{1}$ with $\xi_{1}>0$ and $a_{2}(\xi)$ has a zero in $\overline{\mathbb{C}^{-}}$ at $\xi=0$. Thus we assume that $\dot{a}_{2}(0)\neq 0$ and $a_{11}:=\displaystyle\lim_{\xi \to 0}\xi a_{1}(\xi)\neq 0$.
\begin{prop}\label{pro1}
In order to get the zero $\xi$ of $a_{1}(\xi)$, we need to calculate $\xi_{1}$ for both cases
\begin{enumerate}[(i)]
\item In case1,
\begin{equation}\label{u4}
\xi_{1}=\frac{A}{2}\exp\left\{-\frac{1}{2\pi i}v.p.\int_{-\infty}^{+\infty}
\frac{\ln\frac{\vartheta^{2}}{\vartheta^{2}+1}
(1-b(\vartheta)\overline{b(-\vartheta)})}{\vartheta}d\vartheta\right\}.
\end{equation}
\item In case2,
\begin{equation}\label{u5}
\xi_{1}=A\frac{\sqrt{(\re b(0))^{2}+F_{2}^{2}}-\re b(0)}{2F_{1}F_{2}},
\end{equation}
where
\begin{equation}\label{u6}
F_{1}=\exp\left\{\frac{1}{2\pi i}v.p.\int_{-\infty}^{+\infty}\frac{
\ln(1-b(\vartheta)\overline{b(-\vartheta)})}{\vartheta}d\vartheta\right\},\quad
F_{2}=\exp\left\{\frac{1}{2}\ln(1-\left|b(0)\right|^{2})\right\}.
\end{equation}
\end{enumerate}
\end{prop}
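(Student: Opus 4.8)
The plan is to convert the statement into a scalar (conjugation) Riemann--Hilbert problem for $a_1$ on the real line, and to use the fact that, once the singularity at the origin and the single zero at $i\xi_1$ are stripped off by explicit rational factors, the remaining function is forced to be rational by Liouville's theorem. First I would record the algebraic identity coming from $\det S(\xi)=1$ together with the form \eqref{q18} of $S$, namely
\[
a_1(\vartheta)a_2(\vartheta)=1-b(\vartheta)\overline{b(-\vartheta)}=:J_0(\vartheta),\qquad \vartheta\in\mathbb{R}\setminus\{0\},
\]
so that $a_1$ (analytic in $\mathbb{C}^+$, $\to1$ at $\infty$) and $a_2$ (analytic in $\mathbb{C}^-$, $\to1$ at $\infty$) are the two boundary pieces of a single sectionally analytic object with multiplicative jump $J_0$. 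The whole point is that the location $i\xi_1$ of the zero, and the behaviour at $\xi=0$ dictated by \eqref{q24}, are encoded in the singular part of this scalar problem.

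In Case 1 one has $a_2(0)\neq0$, and by \eqref{q24} $a_1$ has a double pole while $J_0(\vartheta)\sim \tfrac{A^2a_2(0)^2}{4\vartheta^2}$ as $\vartheta\to0$; I would therefore regularize the jump by the even factor $\tfrac{\vartheta^2}{\vartheta^2+1}$, which cancels this $\vartheta^{-2}$ blow-up, leaves the value at $\infty$ untouched, and (being odd after division by $\vartheta$) contributes nothing to a principal-value integral. Set
\[
\delta(\xi)=\exp\left\{\frac{1}{2\pi i}\int_{-\infty}^{+\infty}\frac{\ln\!\big[J_0(s)\tfrac{s^2}{s^2+1}\big]}{s-\xi}\,ds\right\},
\]
which solves $\delta_+=\delta_-\,J_0\tfrac{s^2}{s^2+1}$, is analytic and non-vanishing in $\mathbb{C}^\pm$, and tends to $1$. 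Writing $\tfrac{s^2}{s^2+1}=\tfrac{s}{s+i}\cdot\tfrac{s}{s-i}$ and distributing the two factors to the upper and lower half-planes, I would verify that
\[
F(\xi):=
\begin{cases}
\dfrac{a_1(\xi)}{\delta(\xi)}\,\dfrac{\xi}{\xi+i}, & \xi\in\mathbb{C}^+,\\
\dfrac{1}{\delta(\xi)a_2(\xi)}\,\dfrac{\xi-i}{\xi}, & \xi\in\mathbb{C}^-,
\end{cases}
\]
has no jump across $\mathbb{R}\setminus\{0\}$, carries a simple pole at $0$, a simple zero at $i\xi_1$, and tends to $1$; Liouville then forces $F(\xi)=\tfrac{\xi-i\xi_1}{\xi}$. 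Matching the residue at $0$ computed from the $\mathbb{C}^+$ side via \eqref{q24} against the residue $-i\xi_1$ of $\tfrac{\xi-i\xi_1}{\xi}$ gives $\xi_1=\tfrac{A^2a_2(0)}{4\,\delta_+(0)}$, and evaluating $\delta_+(0)$ by the Sokhotski--Plemelj formula (principal value plus half the jump, with $\big[J_0\tfrac{s^2}{s^2+1}\big]\big|_{0}=\tfrac{A^2a_2(0)^2}{4}$, $a_2(0)>0$) produces exactly \eqref{u4}.

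In Case 2 the origin is benign for the jump: since $a_2$ has a simple zero and $b(0)$ is finite, $J_0(0)=1-|b(0)|^2$ is finite, so no regularizing factor is needed and the same construction with $\delta(\xi)=\exp\{\tfrac1{2\pi i}\int\tfrac{\ln J_0(s)}{s-\xi}ds\}$ yields $\delta_+(0)=F_1F_2$ directly, with $F_1,F_2$ as in \eqref{u6}. Now $a_1$ has only a simple pole at $0$ and $a_2$ a simple zero, so $F:=a_1/\delta$ in $\mathbb{C}^+$ and $1/(a_2\delta)$ in $\mathbb{C}^-$ is again rational, equal to $\tfrac{\xi-i\xi_1}{\xi}$, giving $a_1(\xi)=\delta(\xi)\tfrac{\xi-i\xi_1}{\xi}$ and the residue identity $\xi_1F_1F_2=i\,a_{11}$, where $a_{11}=\lim_{\xi\to0}\xi a_1(\xi)$. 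This is a single equation in the two unknowns $\xi_1$ and $a_{11}$, and the second, independent relation is the delicate ingredient: I would extract it from a local analysis at $\xi=0$, expanding the Jost solutions in \eqref{q23} one order further through the Volterra equations \eqref{q10}, inserting them into the Wronskian representations \eqref{q20} of $a_1$ and $b$, and combining $\det S=1$ with the symmetry \eqref{q15} (which already shows $a_{11}$ is purely imaginary). This leads to $|b(0)+2i\,a_{11}/A|=1$, equivalently the quadratic $\xi_1^2+\tfrac{A\,\re b(0)}{F_1F_2}\,\xi_1-\tfrac{A^2}{4F_1^2}=0$, whose positive root is precisely \eqref{u5}.

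The main obstacle is exactly this second relation in Case 2: the residue $a_{11}$ and the value $b(0)$ both live at the next order of the small-$\xi$ expansion, where the leading Wronskians cancel (this cancellation is the defining feature $|f_1(0,0)|=|f_2(0,0)|$ of Case 2), so one must carry the Jost expansions one step beyond \eqref{q23} and carefully track the cross term that generates $\re b(0)$ and the $A$-dependence. A secondary technical point, needed in both cases to license the Liouville step, is to check that the winding number of the jump is zero, so that $\delta$ has no spurious zeros and $F$ carries no poles or zeros beyond those displayed explicitly.
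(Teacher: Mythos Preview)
Your proposal is correct and follows essentially the same route as the paper. Both arguments set up the scalar Riemann--Hilbert problem $a_1a_2=1-b(\cdot)\overline{b(-\cdot)}$ on $\mathbb{R}$, strip off the explicit rational factors carrying the zero at $i\xi_1$ and the singularity at $0$, solve the remaining regular problem by the Plemelj formula, and then match the behaviour at $\xi=0$. The paper packages this by writing $\hat a_1=e^{T}$, $\hat a_2=e^{-T}$ and comparing two asymptotic formulas for $a_1$ near $0$ (obtaining $\xi_1^2$), whereas you assemble a single sectionally analytic $F$ and invoke Liouville to identify it with $\tfrac{\xi-i\xi_1}{\xi}$, then read off $\xi_1$ from a residue; these are the same computation in different clothing. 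In Case~2 your second relation $|b(0)+2ia_{11}/A|=1$ is algebraically equivalent to the paper's pair $a_{11}=-\tfrac{A^2}{4}\dot a_2(0)+iA\,\re b(0)$ and $1-|b(0)|^2=a_{11}\dot a_2(0)$ (eliminate $\dot a_2(0)$ and use that $a_{11}$ is purely imaginary), and the paper derives this pair exactly as you anticipated, by pushing the Jost expansions one order beyond \eqref{q23} and reading off the subleading Wronskians.
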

\begin{proof}
About $(i)$, to construst a scalar RH problem, which satisfies analytic and has no zeros in $\overline{\mathbb{C}^{+}}$ and $\overline{\mathbb{C}^{-}}$, we make transformations on $a_{1}(\xi)$ and $a_{2}(\xi)$ as follows
\begin{equation}\label{u7}
\hat{a}_{1}(\xi)=a_{1}(\xi)\frac{\xi^{2}}{(\xi-i\xi_{1})(\xi+i)},\quad
\hat{a}_{2}(\xi)=a_{2}(\xi)\frac{\xi-i\xi_{1}}{\xi-i},
\end{equation}
with
\begin{equation}\label{u9}
\hat{a}_{1}(\xi)\rightarrow1,\quad \hat{a}_{2}(\xi)\rightarrow1,\quad \xi\rightarrow\infty.
\end{equation}

Since $\hat{a}_{1}(\xi)$ and $\hat{a}_{2}(\xi)$ have no zeros in $\overline{\mathbb{C}^{+}}$ and $\overline{\mathbb{C}^{-}}$ and this RH problem is regular, therefore by using Sokhotski-Plemelj formula, we has the unique solution
\begin{equation}\label{u10}
\hat{a_{1}}(\xi)=e^{T(\xi)},\quad \hat{a_{2}}(\xi)=e^{-T(\xi)},
\end{equation}
with
\begin{equation}\label{u11}
T(\xi)=\frac{1}{2\pi i}\int_{-\infty}^{+\infty}\frac{\ln\frac{\vartheta^{2}}{\vartheta^{2}+1}
(1-b(\vartheta)\overline{b(-\vartheta)})}{\vartheta-\xi}d\vartheta,
\end{equation}
and
\begin{equation}\label{u12}
T(+i0)+T(-i0)=\frac{1}{\pi i}v.p.\int_{-\infty}^{+\infty}\frac{\ln\frac{\vartheta^{2}}{\vartheta^{2}+1}
(1-b(\vartheta)\overline{b(-\vartheta)})}{\vartheta}d\vartheta.
\end{equation}
When $\xi\rightarrow0$, we have
\begin{equation}\label{u13}
a_{1}(\xi)=\frac{\xi_{1}e^{T(+i0)}}{\xi^{2}}(1+o(\xi)),\quad
a_{2}(\xi)=\frac{1}{\xi_{1}}e^{-T(-i0)}.
\end{equation}
On the other hand,
\begin{equation}\label{u14}
a_{1}(\xi)=\frac{A^{2}a_{2}(0)}{4\xi^{2}}(1+o(\xi))
=\frac{A^{2}}{4\xi^{2}\cdot\xi_{1}}e^{-T(-i0)}(1+o(\xi)).
\end{equation}
Combining equations \eqref{u13} and \eqref{u14}, we have
\begin{equation}\label{u15}
\frac{4\xi_{1}^{2}}{A^{2}}=e^{-[T(+i0)+T(-i0)]}.
\end{equation}
Then $\xi_{1}$ can be solved as follows
\begin{equation}\label{u16}
\xi_{1}=\frac{A}{2}\exp\left\{-\frac{1}{2\pi i}v.p.\int_{-\infty}^{+\infty}\frac{\ln\frac{\vartheta^{2}}{\vartheta^{2}+1}
(1-b(\vartheta)\overline{b(-\vartheta)})}{\vartheta}d\vartheta\right\}.
\end{equation}

About $(ii)$, rewriting \eqref{q21} and using the symmetry relation \eqref{u14}, we have
\begin{subequations}
\begin{align}
\psi_{-}^{(1)}(x,t,\xi)&=\frac{1}{\xi}
     \left(
     \begin{matrix}
     f_{1}(x,t)\\
     f_{2}(x,t)
     \end{matrix}
     \right)
     +\left(
     \begin{matrix}
     m_{1}(x,t)\\
     m_{2}(x,t)
     \end{matrix}
     \right)+O(\xi),\label{u17a}\\
\psi_{-}^{(2)}(x,t,\xi)&=\frac{2i}{A}
     \left(
     \begin{matrix}
     f_{1}(x,t)\\
     f_{2}(x,t)
     \end{matrix}
     \right)+\xi
     \left(
     \begin{matrix}
     n_{1}(x,t)\\
     n_{2}(x,t)
     \end{matrix}
     \right)
     +O(\xi^{2}),\label{u17b}\\
\psi_{+}^{(1)}(x,t,\xi)&=-\frac{2i}{A}
     \left(
     \begin{matrix}
     \overline{f_{2}}(-x,t)\\
     \overline{f_{1}}(-x,t)
     \end{matrix}
     \right)-\xi
     \left(
     \begin{matrix}
     \overline{n_{2}}(-x,t)\\
     \overline{n_{1}}(-x,t)
     \end{matrix}
     \right)
     +O(\xi^{2}),\label{u17c}\\
\psi_{+}^{(2)}(x,t,\xi)&=-\frac{1}{\xi}
     \left(
     \begin{matrix}
     \overline{f_{2}}(-x,t)\\
     \overline{f_{1}}(-x,t)
     \end{matrix}
     \right)+
     \left(
     \begin{matrix}
     \overline{m_{2}}(-x,t)\\
     \overline{m_{1}}(-x,t)
     \end{matrix}
     \right)
     +O(\xi).\label{u17d}
\end{align}
\end{subequations}
Then, according to \eqref{q20}, the scattering data has the forms
\begin{subequations}
\begin{align}
a_{1}(\xi)&=\frac{1}{\xi}(f_{1}\overline{m_{1}}
-\overline{f_{1}}m_{1}-f_{2}\overline{m_{2}}+
\overline{f_{2}}m_{2})\big|_{x,t=0}+O(1),\label{u18a}\\
a_{2}(\xi)&=\frac{2i\xi}{A}(f_{1}\overline{n_{1}}+\overline{f_{1}}n_{1}
-f_{2}\overline{n_{2}}-\overline{f_{2}}n_{2})\big|_{x,t=0}+O(\xi^{2}),\label{u18b}\\
b(\xi)&=[\frac{2i}{A}(\overline{f_{1}}m_{1}-\overline{f_{2}}m_{2})
+(f_{1}\overline{n_{1}}-f_{2}\overline{n_{2}})]\big|_{x,t=0}+O(\xi),\label{u18c}
\end{align}
\end{subequations}
there $\overline{n_{1}}(0,0)m_{1}(0,0)-\overline{n_{2}}(0,0)m_{2}(0,0)=0$ and
the functions $f$, $m$ and $n$ satisfy the relations
$\left|f_{2}(0,0)\right|^{2}-\left|f_{1}(0,0)\right|^{2}=0$, $\left|m_{2}(0,0)\right|^{2}-\left|m_{1}(0,0)\right|^{2}=0$ and $\left|n_{2}(0,0)\right|^{2}-\left|n_{1}(0,0)\right|^{2}=0$. Besides, we also have
\begin{equation}\label{u19}
a_{11}=\displaystyle\lim_{\xi \to 0}(\xi a_{1}(\xi))=f_{1}\overline{m_{1}}
-\overline{f_{1}}m_{1}-f_{2}\overline{m_{2}}+\overline{f_{2}}m_{2}.
\end{equation}
In view of \eqref{u18a}, \eqref{u18b} and \eqref{u18c},\eqref{u19} can be rewritten as
\begin{equation}\label{u20}
a_{11}=-\frac{A^{2}}{4}\dot{a_{2}}(0)+iA\cdot \re b(0).
\end{equation}

For another, similar to $(i)$, we also make transformations on $a_{1}(\xi)$ and $a_{2}(\xi)$
\begin{equation}\label{u21}
\tilde{a}_{1}(\xi)=a_{1}(\xi)\frac{\xi}{\xi-i\xi_{1}},\quad \tilde{a}_{2}(\xi)=a_{2}(\xi)\frac{\xi-i\xi_{1}}{\xi}.
\end{equation}
Then we have the relations
\begin{equation}\label{u22}
a_{1}(\xi)=\frac{\xi-i\xi_{1}}{\xi}\exp\left\{\frac{1}{2\pi i}\int_{-\infty}^{+\infty}\frac{\ln
(1-b(\vartheta)\overline{b(-\vartheta)})}{\vartheta-\xi}d\vartheta\right\},
\end{equation}
\begin{equation}\label{u23}
a_{2}(\xi)=\frac{\xi}{\xi-i\xi_{1}}\exp\left\{-\frac{1}{2\pi i}\int_{-\infty}^{+\infty}\frac{\ln
(1-b(\vartheta)\overline{b(-\vartheta)})}{\vartheta-\xi}d\vartheta\right\},
\end{equation}
and
\begin{equation}\label{u24}
a_{11}=-i\xi_{1}F_{2}F_{1},
\end{equation}
there $F_{1}$, $F_{2}$ are given by \eqref{u6}. Since the following relations exist
\begin{equation}\label{u25}
1-\left|b(0)\right|^{2}=\displaystyle\lim_{\xi \to 0}a_{1}(\xi)a_{2}(\xi)=a_{11}\dot{a_{2}}(0),
\end{equation}
then
\begin{equation}\label{u26}
\dot{a}_{2}(0)=a_{11}^{-1}(1-\left|b(0)\right|^{2})=\frac{i}{\xi_{1}}F_{1}^{-1}F_{2}.
\end{equation}

Combining with \eqref{u20}, \eqref{u24} and \eqref{u26}, we have
\begin{equation}\label{u27}
(4iF_{2}F_{1})\xi_{1}^{2}+(4iA\cdot Reb(0))\xi_{1}-iA^{2}F_{1}^{-1}F_{2}=0,
\end{equation}
with the solution
\begin{equation}\label{u28}
\xi_{1}=A\frac{\sqrt{(Reb(0))^{2}+F_{2}^{2}}-Reb(0)}{2F_{1}F_{2}}.
\end{equation}
\end{proof}

Considering the singularity condition of $a_{1}(\xi)$ and $\psi_{j}(x,t,\xi)$, $j=1,2$ at $\xi=0$, we define the asymptotic behaviors of $P(x,t,\xi)$ at $\xi=0$ for both cases
\begin{enumerate}[(i)]
\item In case 1,
\begin{subequations}
\begin{align}
M_{+}&=
     \left(
     \begin{matrix}
     \frac{4}{A^{2}a_{2}(0)}f_{1}(x,t) & -\overline{f_{2}}(-x,t)\\
     \frac{4}{A^{2}a_{2}(0)}f_{2}(x,t) & -\overline{f_{1}}(-x,t)
     \end{matrix}
     \right)
     (I+O(\xi))
     \left(
     \begin{matrix}
     \xi & 0\\
     0 & \frac{1}{\xi}
     \end{matrix}
     \right),\quad \xi\rightarrow+i0,\label{u29a}\\
M_{-}&=\frac{2i}{A}
     \left(
     \begin{matrix}
     -\overline{f_{2}}(-x,t) & \frac{f_{1}(x,t)}{a_{2}(0)}\\
     -\overline{f_{1}}(-x,t) & \frac{f_{2}(x,t)}{a_{2}(0)}
     \end{matrix}
     \right)+O(\xi),\quad \xi\rightarrow-i0.\label{u29b}
\end{align}
\end{subequations}
\item In case 2,
\begin{subequations}\label{u30}
\begin{align}
M_{+}&=
     \left(
     \begin{matrix}
     \frac{f_{1}(x,t)}{a_{11}(\xi)} & -\overline{f_{2}}(-x,t)\\
     \frac{f_{2}(x,t)}{a_{11}(\xi)} & -\overline{f_{1}}(-x,t)
     \end{matrix}
     \right)
     (I+O(\xi))
     \left(
     \begin{matrix}
     1 & 0\\
     0 & \frac{1}{\xi}
     \end{matrix}
     \right),\quad \xi\rightarrow+i0,\label{u30a}\\
M_{-}&=\frac{2i}{A}
     \left(
     \begin{matrix}
     -\overline{f_{2}}(-x,t) & \frac{f_{1}(x,t)}{\dot{a}_{2}(0)}\\
     -\overline{f_{1}}(-x,t) & \frac{f_{2}(x,t)}{\dot{a}_{2}(0)}
     \end{matrix}
     \right)
     (I+O(\xi))
     \left(
     \begin{matrix}
     1 & 0\\
     0 & \frac{1}{\xi}
     \end{matrix}
     \right),\quad \xi\rightarrow-i0.\label{u30b}
\end{align}
\end{subequations}
\end{enumerate}

We consider the residue condition for $M(x,t,\xi)$ at zero $\xi=i\xi_{1}$
\begin{equation}\label{u31}
\res_{\xi=i\xi_{1}}\left[M(x,t,\xi)\right]_{1}=\frac{r_{1}}{\dot{a}_{1}(i\xi_{1})}
e^{-2\xi_{1}x+2i\xi_{1}^{2}t+16i\xi_{1}^{4}\gamma t}\left[M(x,t,i\xi_{1})\right]_{2},
\end{equation}
with
\begin{equation}\label{u32}
\psi_{-}^{(1)}(x,t,i\xi_{1})=r_{1}\psi_{+}^{(1)}(x,t,i\xi_{1}), \quad \left|r_{1}\right|=1,
\end{equation}
and $r_{1}$ is a constant.
\begin{RHP}\label{rhp}
Find a piece-wise meromorphic matrix $M(x,t,\xi)$ such that
\begin{enumerate}[(i)]
\item Jump conditions:
The non-tangential limits $M_{\pm}(x,t,\xi)=M(x,t,\xi\pm i0)$ exist a.e. for $\xi\in\mathbb{R}$ such that $M(x,t,\cdot)-I\in L^{2}(\mathbb{R}\backslash \left[-\varepsilon,\varepsilon\right])$ for any $\varepsilon>0$ and $M_{\pm}(x,t,\xi)$ satisfy the condition
\begin{equation}\label{u33}
M_{+}(x,t,\xi)=M_{-}(x,t,\xi)J(x,t,\xi),\quad \xi\in\mathbb{R}\backslash{\left\{0\right\}},
\end{equation}
where the jump matrix $J(x,t,\xi)$ is given by (\ref{u3}) and the jump contour is shown in Figure 1, with $a_{j}(\xi)$, $j=1,2$, $b(\xi)$ are given in case 1 or case 2;
\item Normalization condition at $\xi=\infty$:
\begin{align}\label{u34}
M(x,t,\xi)=I+O(\frac{1}{\xi}),\quad \xi\rightarrow\infty;
\end{align}
\item Residue condition (\ref{u31}) with $\xi_{1}$ given in terms of $b(\xi)$ using (\ref{u4}) or (\ref{u5});
\item Singularity conditions at $\xi=0$: $M(x,t,\xi)$ satisfies (\ref{u29a}), (\ref{u29b}) or (\ref{u30a}), (\ref{u30b}).
\end{enumerate}
\end{RHP}

\begin{figure}[H]
      \centering
\begin{tikzpicture}[scale=1.2]
\draw[->,thick](-4,0)--(4,0)node[right]{ \textcolor{black}{$\mathbb{R}$}};
\draw[-,dashed](0,0)--(0,2);
\draw[fill] (-2,0) circle [radius=0.035]node[below]{$\lambda_{3}$};
\draw[fill] (0.6,0) circle [radius=0.035]node[below]{$\lambda_{2}$};
\draw[fill] (2,0) circle [radius=0.035]node[below]{$\lambda_{1}$};
\draw[fill,blue] (0,1) circle [radius=0.035]node[right]{$i\xi_{1}$};
\draw[fill,blue] (0,0) circle [radius=0.035];
\draw[fill] (0,0) node[below]{$0$};
\draw[->,thick](1,0)--(1.5,0);
\draw[->,thick](-1.5,0)--(-1,0);
\end{tikzpicture}
          \caption{ \footnotesize The jump contour $\mathbb{R}$ and singular points of RH problem for $M(x,t,\xi)$.}
\end{figure}
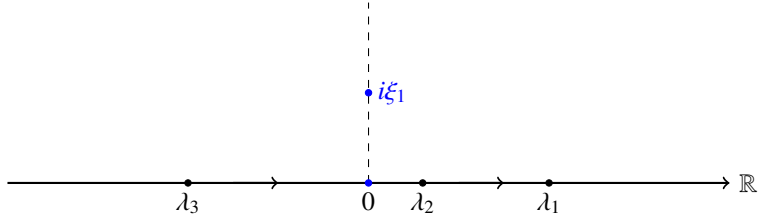

Assume that the RH problem (i)-(iv) has a solution $M(x,t,\xi)$. Then the solution of the initial value problem is given in terms of $M_{12}$ and $M_{21}$ as follows
\begin{equation}\label{u35}
q(x,t)=2i\displaystyle\lim_{\xi \to \infty}\xi M_{12}(x,t,\xi),
\end{equation}
\begin{equation}\label{u36}
q(-x,t)=-2i\displaystyle\lim_{\xi \to \infty}\xi \overline{M_{21}(x,t,\xi)}.
\end{equation}

\begin{prop}\label{prop1}
Suppose that $a_{1}(\xi)$, $a_{2}(\xi)$ and $b(\xi)$ satisfy the following conditions
\begin{enumerate}[(i)]
\item $a_{1}(\xi)$ and $a_{2}(\xi)$ are given by case 2;
\item $b(\xi)=0$ for all the $\xi\in\mathbb{R}$.
\end{enumerate}
Then $\xi_{1}$ is uniquely determined by $\xi_{1}=\frac{A}{2}$. The exact solution $q(x,t)$ of problem \eqref{e1} and \eqref{e4} is given by
\begin{equation}\label{a1}
q(x,t)=\frac{A}{1-e^{-Ax+\frac{i}{2}A^{2}t+iA^{4}\gamma t+i\alpha}},
\end{equation}
there $r_{1}=e^{i\alpha}$ with $\alpha\in\mathbb{R}$.
\end{prop}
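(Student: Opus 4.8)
The plan is to treat Proposition \ref{prop1} as the exactly solvable, reflectionless specialization of RH Problem \ref{rhp}. First I would pin down $\xi_1$. Substituting $b(\xi)\equiv0$ into the Case 2 formulas makes $\re b(0)=0$ and reduces the constants in \eqref{u6} to $F_1=F_2=1$, so the defining quadratic \eqref{u27} collapses to $\xi_1^2=A^2/4$; since $\xi_1>0$ this gives the unique value $\xi_1=\tfrac{A}{2}$. At the same time, with $b\equiv0$ the Cauchy integrals in \eqref{u22}--\eqref{u23} vanish, so $a_1$ and $a_2$ become the explicit rational functions $a_1(\xi)=(\xi-i\xi_1)/\xi$ and $a_2(\xi)=\xi/(\xi-i\xi_1)$. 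From these I read off the two constants that will enter the reconstruction: $\dot a_1(i\xi_1)=-i/\xi_1=-2i/A$ and $a_{11}=\lim_{\xi\to0}\xi a_1(\xi)=-i\xi_1=-\tfrac{i}{2}A$.

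Next I would reduce the RH problem to linear algebra. Since $b\equiv0$, the jump matrix \eqref{u3} is the identity, so $M(x,t,\xi)$ has no jump across $\mathbb R$ and is a single rational matrix with $\det M\equiv1$ and $M\to I$ at infinity. Its only singularities are a simple pole of the first column at $\xi=i\xi_1$ (the residue condition \eqref{u31}) and a simple pole of the second column at $\xi=0$ (the Case 2 singularity conditions \eqref{u30a}--\eqref{u30b}), while the first column is regular at $0$ and the second column is regular at $i\xi_1$. This forces the ansatz
\[
M(x,t,\xi)=I+\frac{1}{\xi-i\xi_{1}}\begin{pmatrix}\alpha_{1}&0\\\alpha_{2}&0\end{pmatrix}+\frac{1}{\xi}\begin{pmatrix}0&\beta_{1}\\0&\beta_{2}\end{pmatrix},
\]
with coefficients depending on $(x,t)$, and by \eqref{u35} the solution will be recovered as $q=2i\beta_1$.

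Then I would close and solve the system. With $\xi_1=A/2$ the exponent in \eqref{u31} is exactly $-Ax+\tfrac{i}{2}A^2t+iA^4\gamma t$, and the residue condition reads $(\alpha_1,\alpha_2)^{\mathrm T}=c\,[M(i\xi_1)]_2$ with $c=\tfrac{r_1}{\dot a_1(i\xi_1)}e^{-Ax+\frac{i}{2}A^2t+iA^4\gamma t}$ and $r_1=e^{i\alpha}$. For the origin I would use the nonlocal symmetry \eqref{q14}, which (matching \eqref{u30a} against \eqref{u30b} on $\mathbb R$) yields $f_1(x,t)=-\overline{f_2}(-x,t)$ and $f_2(x,t)=-\overline{f_1}(-x,t)$ and thereby collapses the singularity data to the clean relation $\res_{\xi=0}[M]_2=a_{11}[M(0)]_1$. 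These two vector conditions give four scalar equations for $\alpha_1,\alpha_2,\beta_1,\beta_2$; eliminating yields $\beta_1=(i\xi_1)^2/(c-i\xi_1)$, so that $q=2i\beta_1=A\,(1+2ic/A)^{-1}$. Inserting $\dot a_1(i\xi_1)=-2i/A$ turns $2ic/A$ into $-e^{-Ax+\frac{i}{2}A^2t+iA^4\gamma t+i\alpha}$, which is exactly \eqref{a1}; the boundary conditions \eqref{e4} then follow at once, since this exponential tends to $\infty$ as $x\to-\infty$ (giving $q\to0$) and to $0$ as $x\to+\infty$ (giving $q\to A$).

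The step I expect to be the main obstacle is the treatment of the singularity at $\xi=0$. Unlike the classical decaying problem, the step-like boundary data forces $M$ to carry a genuine pole at the origin, so it cannot simply be normalized away; the work is to show, using the symmetry \eqref{q14} together with the normalizations $\det M\equiv1$ and $|r_1|=1$, that the prescribed Case 2 data \eqref{u30a}--\eqref{u30b} is equivalent to a single residue condition at $0$ that is consistent from both half-planes. Once this equivalence is in place, the remaining computation is the short linear solve sketched above.
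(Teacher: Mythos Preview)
Your proposal is correct and follows essentially the same route as the paper: determine $\xi_1=A/2$ from the Case~2 formula with $b\equiv0$, observe that the jump is trivial so $M$ is rational with poles only at $i\xi_1$ (column~1) and $0$ (column~2), use the nonlocal symmetry to collapse the Case~2 singularity data at the origin into the single residue relation $\res_{\xi=0}[M]_2=\tfrac{A}{2i}[M(0)]_1$ (your coefficient $a_{11}=-iA/2$ agrees with this), and then solve the resulting $2\times2$ linear system to get $q=2i\beta_1=A/(1-e^{-Ax+\frac{i}{2}A^2t+iA^4\gamma t+i\alpha})$. The only cosmetic difference is that the paper builds the residue-at-$0$ constraint directly into its ansatz \eqref{u40} (parametrized by $f_1$), whereas you keep four unknowns and impose both residue conditions afterward; the algebra and the identified ``main obstacle'' are the same.
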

\begin{proof}
Since $b(\xi)=0$, then we can obtain $b(0)=0$. From case 2 of Proposition \ref{pro1}, it can be received that $\xi_{1}=\frac{A}{2}$, moreover,
\begin{align}\label{u37}
a_{1}(\xi)=\frac{\xi-\frac{A}{2}i}{\xi},\quad
a_{2}(\xi)=\frac{\xi}{\xi-\frac{A}{2}i},
\end{align}
and
\begin{align}\label{u38}
a_{11}=\frac{2i}{A},\quad
\dot{a}_{2}(0)=\frac{A}{2i}.
\end{align}

Based on the condition $b(\xi)=0$, it can be seen that $M(x,t,\xi)$ is a meromorphic function with the only pole at point $\xi=i\xi_{1}$. Then, comparing \eqref{u30a} and \eqref{u30b}, we can conclude that $f_{1}(x,t)=-\overline{f_{2}}(-x,t)$. Therefore, the singularity conditions \eqref{u30} convert into a residue condition
\begin{align}\label{u39}
\res_{\xi=0}\left[M(x,t,\xi)\right]_{2}=\frac{A}{2i}
\left[M(x,t,0)\right]_{1}.
\end{align}

Then considering the normalization condition (the item (ii) of RH problem \ref{rhp}) at $\xi=\infty$, we derive the following representation of $M(x,t,\xi)$
\begin{align}\label{u40}
M(x,t,\xi)=\left(\begin{array}{cc}
\frac{\xi+f_{1}(x,t)}{\xi-\frac{iA}{2}} & \frac{f_{1}(x,t)}{\xi}   \\
\frac{-\overline{f_{1}}(-x,t)}{\xi-\frac{iA}{2}} & \frac{\xi-\overline{f_{1}}(-x,t)}{\xi}
\end{array}\right).
\end{align}
Using the residue condition \eqref{u31} at $\xi=\frac{A}{2}i$, we obtain
\begin{align}\label{u41}
f_{1}(x,t)=\frac{A}{2i}\frac{1}{1-e^{-Ax+\frac{i}{2}A^{2}t+iA^{4}\gamma t+i\alpha}}.
\end{align}
Finally, by \eqref{u35}, we can get the one-soliton solution \eqref{a1}.
\end{proof}

\section{The long-time asymptotics}\quad

In this section, we consider the long-time asymptotic behaviors of the solution $q(x,t)$ to the nonlocal LPD equation. By utilizing the nonlinear steepest descent method, the original RH problem \ref{rhp} can be solved into an explicit problem by transformation. Owing to the equations \eqref{u35} and \eqref{u36}, it is enough that we only study the RH problem for case $x>0$. Let $\mu=\frac{x}{t}$ the phase function can be expressed as follows
\begin{align}\label{4-1}
\theta(\xi,\mu)=\xi\mu-\xi^{2}+8\xi^{4}\gamma.
\end{align}
Then the exponentials of jump matrices have the form $e^{it\theta(\xi)}=e^{t\varphi(\xi)}$. By calculation, it can be concluded that there are several cases to the roots for $\varphi^{\prime}(\xi)$:
\begin{enumerate}[(i)]
\item when $\mu^{2}>\frac{1}{27\gamma}$, there exists one real stationary point;
\item when $\mu^{2}=\frac{1}{27\gamma}$, there exist three real stationary points and two of them are equal;
\item when $\mu^{2}<\frac{1}{27\gamma}$, there exist three different real stationary points.
\end{enumerate}

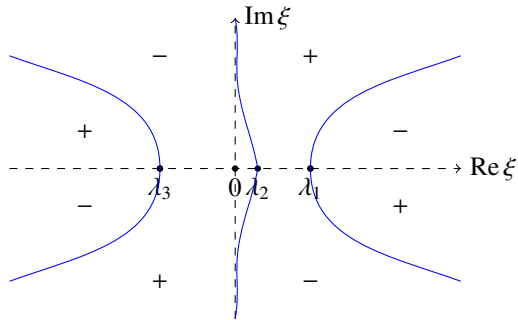
\begin{figure}[H]
      \centering
\begin{tikzpicture}[scale=1]
\draw[->,dashed](-3,0)--(3,0)node[right]{ \textcolor{black}{$\re \xi$}};
\draw[->,dashed](0,-2)--(0,2)node[right]{\textcolor{black}{$\im \xi$}};
\draw[fill] (-1,0) circle [radius=0.035]node[below]{$\lambda_{3}$};
\draw[fill] (1,0) circle [radius=0.035]node[below]{$\lambda_{1}$};
\draw[fill] (1,0) circle [radius=0.035];
\draw[fill] (-1,0) circle [radius=0.035];
\draw[fill] (0,0) circle [radius=0.035]node[below]{$0$};
\draw[fill] (0.3,0) circle [radius=0.035]node[below]{$\lambda_{2}$};
\draw [-, blue] (1,0) to [out=90,in=-160] (3,1.5);
\draw [-, blue] (1,0) to [out=-90,in=160] (3,-1.5);
\draw [-, blue] (-1,0) to [out=90,in=-20] (-3,1.5);
\draw [-, blue] (-1,0) to [out=-90,in=20] (-3,-1.5);
\draw [-, blue] (0.3,0) to [out=100,in=-80] (0.05,1);
\draw [-, blue] (0.3,0) to [out=-100,in=80] (0.05,-1);
\draw [-, blue] (0.05,1) to [out=100,in=-80] (0,2);
\draw [-, blue] (0.05,-1) to [out=-100,in=80] (0,-2);
\draw[fill] (-1,1.5) node{$-$};
\draw[fill] (-1,-1.5) node{$+$};
\draw[fill] (-2,0.5) node{$+$};
\draw[fill] (2.2,0.5) node{$-$};
\draw[fill] (2.2,-0.5) node{$+$};
\draw[fill] (-2,-0.5) node{$-$};
\draw[fill] (1,1.5) node{$+$};
\draw[fill] (1,-1.5) node{$-$};
\end{tikzpicture}
          \caption{ \footnotesize The signature table for $\re\varphi(\xi)$ in the complex $\xi$-plane.}
\end{figure}

In the following work, we consider the case $\mu^{2}<\frac{1}{27\gamma}$ and the three different real roots are as follows
\begin{align}\label{4-2}
\begin{split}
&\lambda_{1}=\frac{w^{2}}{4}\sqrt[3]{-\frac{\mu}{\gamma}+\frac{1}{\gamma}\sqrt{\mu^{2}-\frac{1}{27\gamma}}}
+\frac{w}{4}\sqrt[3]{-\frac{\mu}{\gamma}-\frac{1}{\gamma}\sqrt{\mu^{2}-\frac{1}{27\gamma}}},\\
&\lambda_{2}=\frac{w}{4}\sqrt[3]{-\frac{\mu}{\gamma}+\frac{1}{\gamma}\sqrt{\mu^{2}-\frac{1}{27\gamma}}}
+\frac{w^{2}}{4}\sqrt[3]{-\frac{\mu}{\gamma}-\frac{1}{\gamma}\sqrt{\mu^{2}-\frac{1}{27\gamma}}},\\
&\lambda_{3}=\frac{1}{4}\sqrt[3]{-\frac{\mu}{\gamma}+\frac{1}{\gamma}\sqrt{\mu^{2}-\frac{1}{27\gamma}}}
+\frac{1}{4}\sqrt[3]{-\frac{\mu}{\gamma}-\frac{1}{\gamma}\sqrt{\mu^{2}-\frac{1}{27\gamma}}},
\end{split}
\end{align}
with $w=\frac{-1+\sqrt{3}i}{2}$. When $\mu\in D_{1}=\left(\epsilon,\sqrt{\frac{1}{27\gamma}}-\epsilon\right)$ for any positive constant $\epsilon$, we have the relations $\lambda_{1}>0$, $\lambda_{2}>0$ and $\lambda_{3}<0$. The signature table for the distribution of $\re\varphi(\xi)$ in the complex $\xi$-plane is shown in Figure 2. in this case. We will focus this case in the following analysis. For case $\mu\in D_{2}=\left(-\sqrt{\frac{1}{27\gamma}}+\epsilon,-\epsilon\right)$, the similar discussion can be conduced, the only different is that the positive or negative sign of these three real roots are the exact opposite, and it will lead to
the singularity point zero located in the right of $\lambda_{1}$ point. This difference will be reflected in the discussion of singularity conditions at zero point in the after transformation. The result to the long-time asymptotics of the solution $q(x,t)$ will keep the same. When $\mu=0$, we have $\lambda_{3}=0$. Similar to the above discussion, we will also obtain the same asymptotic behavior, but here we fix $\mu=0$. \\

\subsection{Factorization of the jump matrix}\quad

Firstly, there are two types of triangular factorizations of jump matrix $J(x,t,\xi)$
\begin{align*}
J(x,t,\xi)=\left\{ \begin{aligned}
         &\left(\begin{array}{cc}
             1 & -r_{2}(\xi)e^{-2it\theta} \\
             0 & 1 \\
           \end{array}\right)\left(
                               \begin{array}{cc}
                                 1 & 0 \\
                                 -r_{1}(\xi)e^{2it\theta} & 1 \\
                               \end{array}
                             \right),\\
         &\left(\begin{array}{cc}
                    1 & 0 \\
                    -\frac{r_{1}(\xi)e^{2it\theta}}{1+r_{1}(\xi)r_{2}(\xi)} & 1 \\
                  \end{array}\right)\left(
                                      \begin{array}{cc}
                                        1+r_{1}(\xi)r_{2}(\xi) & 0 \\
                                        0 & \frac{1}{1+r_{1}(\xi)r_{2}(\xi)} \\
                                      \end{array}
                  \right)\left(
                  \begin{array}{cc}
                    1 & -\frac{r_{2}(\xi)e^{-2it\theta}}{1+r_{1}(\xi)r_{2}(\xi)} \\
                      0 & 1 \\
                       \end{array}
                        \right).
                          \end{aligned} \right.
\end{align*}
To get ride of the intermediate matrix of the second factorization, it is necessary to introduce a function $\delta(\xi,\mu)$ as the solution of the scalar RH problem:
\begin{enumerate}[(i)]
\item $\delta(\xi,\mu)$ is holomorphic for $\xi\in\mathbb{C}\backslash ((-\infty,\lambda_{3} ]\cup[\lambda_{2},\lambda_{1}])$,
\item $\delta_{+}(\xi,\mu)=\delta_{-}(\xi,\mu)(1+r_{1}(\xi)r_{2}(\xi))$,\quad
$\xi\in(-\infty,\lambda_{3})\cup(\lambda_{2},\lambda_{1})$,
\item $\delta(\xi,\mu)\rightarrow 1$,\quad $\xi\rightarrow\infty$.
\end{enumerate}
Using the plemelj formula, its solution can be written in the form of Cauchy-type integral
\begin{align}\label{4-4}
\delta(\xi,\mu)=\exp\left\{\frac{1}{2\pi i}\left(\int_{-\infty}^{\lambda_{3}}+\int_{\lambda_{2}}^{\lambda_{1}}\right)\frac{\ln(1+r_{1}(\zeta)r_{2}(\zeta))}{\zeta-\xi}d\zeta \right\}.
\end{align}
From the symmetry relations of $r_{1}(\xi)$ and $r_{2}(\xi)$, we have $\delta(\xi,\mu)=\overline{\delta(-\bar{\xi},\mu)}$.

Moreover,$\delta(\xi,\mu)$ can be written as
\begin{align}\label{4-5}
\begin{split}
\delta(\xi,\mu)&=(\xi-\lambda_{1})^{iv(\lambda_{1})}\left(\frac{\xi-\lambda_{2}}{\xi-\lambda_{3}} \right)^{-iv(\lambda_{1})}e^{\chi_{1}(\xi)},\\
&=(\xi-\lambda_{3})^{iv(\lambda_{3})}\left(\frac{\xi-\lambda_{2}}{\xi-\lambda_{1}} \right)^{-iv(\lambda_{2})}e^{\chi_{2}(\xi)},\\
&=(\xi-\lambda_{3})^{iv(\lambda_{3})}\left(\frac{\xi-\lambda_{2}}{\xi-\lambda_{1}} \right)^{-iv(\lambda_{3})}e^{\chi_{3}(\xi)},
\end{split}
\end{align}
with
\begin{align}\label{4-6}
\begin{split}
\chi_{1}(\xi)&=\frac{1}{2\pi i}\left[\int_{\lambda_{2}}^{\lambda_{3}}\ln\left(\frac{1+r_{1}(\zeta)r_{2}(\zeta)}{1+r_{1}(\lambda_{1})r_{2}(\lambda_{1})} \right)\frac{d\zeta}{\zeta-\xi}-\int_{-\infty}^{\lambda_{1}}\ln(\xi-\zeta)d\ln(1+r_{1}(\zeta)r_{2}(\zeta)) \right],\\
\chi_{2}(\xi)&=\frac{1}{2\pi i}\left[\int_{\lambda_{2}}^{\lambda_{1}}\ln\left(\frac{1+r_{1}(\zeta)r_{2}(\zeta)}{1+r_{1}(\lambda_{2})r_{2}(\lambda_{2})} \right)\frac{d\zeta}{\zeta-\xi}-\int_{-\infty}^{\lambda_{3}}\ln(\xi-\zeta)d\ln(1+r_{1}(\zeta)r_{2}(\zeta)) \right],\\
\chi_{3}(\xi)&=\frac{1}{2\pi i}\left[\int_{\lambda_{2}}^{\lambda_{1}}\ln\left(\frac{1+r_{1}(\zeta)r_{2}(\zeta)}{1+r_{1}(\lambda_{3})r_{2}(\lambda_{3})} \right)\frac{d\zeta}{\zeta-\xi}-\int_{-\infty}^{\lambda_{3}}\ln(\xi-\zeta)d\ln(1+r_{1}(\zeta)r_{2}(\zeta)) \right],\\
\end{split}
\end{align}
there $v(\lambda_{l})$ $(l=1,2,3)$ can be expressed as
\begin{align}\label{4-7}
v(\lambda_{l})
=-\frac{1}{2\pi}\ln|1+r_{1}(\lambda_{l})r_{2}(\lambda_{l})|-\frac{i}{2\pi}\Delta(\lambda_{l}),
\quad l=1,2,3,
\end{align}
\begin{align*}
\Delta(\lambda_{l})=\int_{-\infty}^{\lambda_{l}}d \arg(1+r_{1}(\zeta)r_{2}(\zeta)).
\end{align*}
By assuming that $\Delta(\xi)\in(-\pi,\pi)$ for $\xi\in\mathbb{R}$, we have $|\im v(\xi)|<\frac{1}{2}$. In this assumption, $\ln(1+r_{1}(\xi)r_{2}(\xi))$ is single-valued, and the singularities $\xi=\lambda_{l}$ $(l=1,2,3)$ of $\delta(\xi,\mu)$ are square integrable.

With $\delta(\xi,\mu)$ constructed in \eqref{4-4}, we define the new function
\begin{align}\label{4-8}
\widetilde{M}(x,t,\xi)=M(x,t,\xi)\delta^{-\sigma_{3}}(\xi,\mu),
\end{align}
therefore we can define the RH problem of the function $\widetilde{M}(x,t,\xi)$.
\begin{RHP}\label{rhp2}
Find a matrix function $\widetilde{M}(x,t,\xi)$ admits the following relations
\begin{enumerate}[(i)]
\item $\widetilde{M}(x,t,\xi)$ is meromorphic for $\xi\in\mathbb{C}\backslash\mathbb{R}$ and has a simple pole located at $\xi=i\xi_{1}$, $\xi_{1}>0$.
\item Jump conditions:
The non-tangential limits $\widetilde{M}(x,t,\xi)=\widetilde{M}(x,t,\xi\pm i0)$ exist a.e. for $\xi\in\mathbb{R}$ such that $\widetilde{M}(x,t,\cdot)-I\in L^{2}(\mathbb{R}\backslash \left[-\varepsilon,\varepsilon\right])$ for any $\varepsilon>0$ and $\widetilde{M}_{\pm}(x,t,\xi)$ satisfy the condition
\begin{equation}\label{4-9}
\widetilde{M}_{+}(x,t,\xi)=\widetilde{M}_{-}(x,t,\xi)\widetilde{J}(x,t,\xi),\quad \xi\in\mathbb{R}\backslash{\left\{0\right\}},
\end{equation}
where
\begin{align}\label{4-10}
\widetilde{J}=\left\{ \begin{aligned}
         &\left(\begin{array}{cc}
             1 & 0 \\
        -\frac{r_{1}(\xi)\delta_{-}^{-2}(\xi,\mu)}{1+r_{1}(\xi)r_{2}(\xi)}e^{2it\theta} & 1 \\
           \end{array}\right)\left(
                               \begin{array}{cc}
           1 & -\frac{r_{2}(\xi)\delta_{+}^{2}(\xi,\mu)}{1+r_{1}(\xi)r_{2}(\xi)}e^{-2it\theta} \\
        0 & 1 \\
                               \end{array}
                             \right),\quad
        \xi<\lambda_{3},\lambda_{2}<\xi<\lambda_{1},\\
         &\left(\begin{array}{cc}
                    1 & -r_{2}(\xi)\delta^{2}(\xi,\mu)e^{-2it\theta} \\
                    0 & 1 \\
                  \end{array}\right)\left(
                  \begin{array}{cc}
                    1 & 0 \\
                    -r_{1}(\xi)\delta^{-2}(\xi,\mu)e^{2it\theta} & 1 \\
                       \end{array}
                        \right),\quad \xi>\lambda_{1},\lambda_{3}<\xi<\lambda_{2}.
                          \end{aligned} \right.
\end{align}
\item Normalization condition at $\xi=\infty$:
\begin{align}\label{4-11}
\widetilde{M}(x,t,\xi)=I+O(\frac{1}{\xi}),\quad \xi\rightarrow\infty.
\end{align}
\item Residue condition:
\begin{equation}\label{4-12}
\res_{\xi=i\xi_{1}}\left[\widetilde{M}(x,t,\xi)\right]_{1}=\frac{r_{1}}{\dot{a}_{1}(i\xi_{1})\delta^{2}(i\xi_{1})}
e^{-2\xi_{1}x+2i\xi_{1}^{2}t+16i\xi_{1}^{4}\gamma t}\left[\widetilde{M}(x,t,i\xi_{1})\right]_{2}.
\end{equation}
\item Singularity conditions at $\xi=0$:\\
In case 1,
\begin{subequations}
\begin{align}
\widetilde{M}_{+}&=
     \left(
     \begin{matrix}
     \frac{4 f_{1}(x,t)}{A^{2}a_{2}(0)\delta(0,\mu)} & -\delta(0,\mu)\overline{f_{2}}(-x,t)\\
     \frac{4 f_{2}(x,t)}{A^{2}a_{2}(0)\delta(0,\mu)} & -\delta(0,\mu)\overline{f_{1}}(-x,t)
     \end{matrix}
     \right)
     (I+O(\xi))
     \left(
     \begin{matrix}
     \xi & 0\\
     0 & \frac{1}{\xi}
     \end{matrix}
     \right),\quad \xi\rightarrow+i0,\label{4-13a}\\
\widetilde{M}_{-}&=\frac{2i}{A}
     \left(
     \begin{matrix}
     -\frac{\overline{f_{2}}(-x,t)}{\delta(0,\mu)} & \frac{\delta(0,\mu)f_{1}(x,t)}{a_{2}(0)}\\
     -\frac{\overline{f_{1}}(-x,t)}{\delta(0,\mu)} & \frac{\delta(0,\mu)f_{2}(x,t)}{a_{2}(0)}
     \end{matrix}
     \right)+O(\xi),\quad \xi\rightarrow-i0.\label{4-13b}
\end{align}
\end{subequations}
In case 2,
\begin{subequations}\label{4-14}
\begin{align}
\widetilde{M}_{+}&=
     \left(
     \begin{matrix}
     \frac{f_{1}(x,t)}{a_{11}\delta(0,\mu)} & -\overline{f_{2}}(-x,t)\delta(0,\mu)\\
     \frac{f_{2}(x,t)}{a_{11}\delta(0,\mu)} & -\overline{f_{1}}(-x,t)\delta(0,\mu)
     \end{matrix}
     \right)
     (I+O(\xi))
     \left(
     \begin{matrix}
     1 & 0\\
     0 & \frac{1}{\xi}
     \end{matrix}
     \right),\quad \xi\rightarrow+i0,\label{4-14a}\\
\widetilde{M}_{-}&=\frac{2i}{A}
     \left(
     \begin{matrix}
     -\frac{\overline{f_{2}}(-x,t)}{\delta(0,\mu)} & \frac{\delta(0,\mu)f_{1}(x,t)}{\dot{a}_{2}(0)}\\
     -\frac{\overline{f_{1}}(-x,t)}{\delta(0,\mu)} & \frac{\delta(0,\mu)f_{2}(x,t)}{\dot{a}_{2}(0)}
     \end{matrix}
     \right)
     (I+O(\xi))
     \left(
     \begin{matrix}
     1 & 0\\
     0 & \frac{1}{\xi}
     \end{matrix}
     \right),\quad \xi\rightarrow-i0.\label{4-14b}
\end{align}
\end{subequations}
\end{enumerate}
\end{RHP}

\subsection{RH problem transformation}\quad

There it is necessary to carry on the second transformation to transform the contour, which can make the jump matrices decline to identity $I$ for the large-$t$. Generally speaking, the new RH transformation $\widetilde{M}(x,t,\xi)$ rely on the reflection coefficients $r_{j}(\xi)$, $\frac{r_{j}(\xi)}{1+r_{1}(\xi)r_{2}(\xi)}$, $j=1,2$. Then by the classical Deift-Zhou method, they can be approximated by some rational functions with good error control.

In the following analysis, for clarity, we will suppose the initial data $q_{0}(x)$ allows a compact perturbation of pure-step initial data $q_{0A}(x)$ shown in \eqref{q31}, which makes certain that the eigenfuntions $\psi_{\pm}^{(s)}(x,0,\xi)$, $s=1,2$ and thus $r_{j}(\xi)$, $j=1,2$ are meromorphic in $\mathbb{C}$. Then we define the function $\widehat{M}(x,t,\xi)$ as follows (see Figure 3.)\\

\begin{align}\label{4-15}
\widehat{M}(x,t,\xi)=\left\{ \begin{aligned}
         &\widetilde{M}(x,t,\xi)\left(
                          \begin{array}{cc}
                            1 & \frac{r_{2}(\xi)\delta^{2}(\xi,\mu)}{1+r_{1}(\xi)r_{2}(\xi)}e^{-2it\theta} \\
                            0 & 1 \\
                          \end{array}
                        \right)
         ,&\quad &\xi\in \Omega_{2},\\
         &\widetilde{M}(x,t,\xi)\left(
                          \begin{array}{cc}
                            1 & 0 \\
                            r_{1}(\xi)\delta^{-2}(\xi,\mu)e^{2it\theta} & 1 \\
                          \end{array}
                        \right),&\quad &\xi\in \Omega_{1},\\
         &\widetilde{M}(x,t,\xi)\left(
                          \begin{array}{cc}
                            1 & -r_{2}(\xi)\delta^{2}(\xi,\mu)e^{-2it\theta} \\
                            0 & 1 \\
                          \end{array}
                        \right),& \quad &\xi\in \Omega^{*}_{1},\\
         &\widetilde{M}(x,t,\xi)\left(
                          \begin{array}{cc}
                            1 & 0 \\
                            -\frac{r_{1}(\xi)\delta^{-2}(\xi,\mu)}{1+r_{1}(\xi)r_{2}(\xi)}e^{2it\theta} & 1 \\
                          \end{array}
                        \right),& \quad &\xi\in \Omega^{*}_{2},\\
                        &\widetilde{M}(x,t,\xi),& \quad &\xi\in \Omega_{0}\cup \Omega^{*}_{0}.\\
                          \end{aligned} \right.
\end{align}

Here, we choose the appropriate angle between the real axis and rays $\Upsilon_{j}, \Upsilon^{*}_{j}$ to ensure the discrete spectrum $i\xi_{1}$ is located in the sector $\Omega_{0}$. Then the following RH problem $\widehat{M}(x,t,\xi)$ on the contour $\Upsilon$ is obtained.

\begin{RHP}\label{rhp3}
Find a matrix function $\widehat{M}(x,t,\xi)$ admits the following relations
\begin{enumerate}[(i)]
\item $\widehat{M}(x,t,\xi)$ is meromorphic for $\xi\in\mathbb{C}\backslash\Upsilon$ and has a simple pole located at $\xi=i\xi_{1}$, $\xi_{1}>0$.
\item Jump conditions:
The non-tangential limits $\widehat{M}(x,t,\xi)=\widehat{M}(x,t,\xi\pm i0)$ exist a.e. for $\xi\in\mathbb{R}$ such that $\widehat{M}(x,t,\cdot)-I\in L^{2}(\mathbb{R}\backslash \left[-\varepsilon,\varepsilon\right])$ for any $\varepsilon>0$ and $\widehat{M}_{\pm}(x,t,\xi)$ satisfy the condition
\begin{equation}\label{4-16}
\widehat{M}_{+}(x,t,\xi)=\widehat{M}_{-}(x,t,\xi)\widehat{J}(x,t,\xi),\quad \xi\in \Upsilon,
\end{equation}
where
\begin{align}\label{4-17}
\widehat{J}(x,t,\xi)=\left\{ \begin{aligned}
         &\left(
         \begin{array}{cc}
         1 & -\frac{r_{2}(\xi)\delta^{2}(\xi,\mu)}{1+r_{1}(\xi)r_{2}(\xi)}e^{-2it\theta}\\
         0 & 1 \\
         \end{array}
         \right)
         ,&\quad &\xi\in \Upsilon_{2},\\
         &\left(
         \begin{array}{cc}
         1 & 0 \\
         -r_{1}(\xi)\delta^{-2}(\xi,\mu)e^{2it\theta} & 1 \\
         \end{array}
         \right),&\quad &\xi\in \Upsilon_{1},\\
         &\left(
         \begin{array}{cc}
         1 & r_{2}(\xi)\delta^{2}(\xi,\mu)e^{-2it\theta} \\
         0 & 1 \\
         \end{array}
         \right),&\quad &\xi\in \Upsilon^{*}_{1},\\
         &\left(
         \begin{array}{cc}
         1 & 0 \\
         \frac{r_{1}(\xi)\delta^{-2}(\xi,\mu)}{1+r_{1}(\xi)r_{2}(\xi)}e^{2it\theta} & 1 \\
         \end{array}
         \right),& \quad &\xi\in \Upsilon^{*}_{2}.\\
                          \end{aligned} \right.
\end{align}
\item Normalization condition at $\xi=\infty$:
\begin{align}\label{4-18}
\widehat{M}(x,t,\xi)=I+O(\frac{1}{\xi}),\quad \xi\rightarrow\infty.
\end{align}
\item Residue condition:
\begin{equation}\label{4-19}
\res_{\xi=i\xi_{1}}\left[\widehat{M}(x,t,\xi)\right]_{1}=c_{1}(x,t)\left[\widehat{M}(x,t,i\xi_{1})\right]_{2},
\end{equation}
where $c_{1}(x,t)=\frac{r_{1}}{\dot{a}_{1}(i\xi_{1})\delta^{2}(i\xi_{1})}
e^{-2\xi_{1}x+2i\xi_{1}^{2}t+16i\xi_{1}^{4}\gamma t}$.
\item Singularity conditions at $\xi=0$, $\widehat{M}(x,t,\xi)$ satisfies the following relations in both case1 and case2:\\
\begin{subequations}\label{4-20}
\begin{align}
\widehat{M}_{+}&=
     \left(
     \begin{matrix}
     -\frac{2i \overline{f}_{2}(-x,t)}{A \delta(0,\mu)}+O(\xi) & -\frac{1}{\xi}\overline{f}_{2}(-x,t)\delta(0,\mu)+O(1)\\
     -\frac{2i \overline{f}_{1}(-x,t)}{A \delta(0,\mu)}+O(\xi) & -\frac{1}{\xi}\overline{f}_{1}(-x,t)\delta(0,\mu)+O(1)
     \end{matrix}
     \right),\quad \xi\rightarrow+i0,\label{4-20a}\\
\widehat{M}_{-}&=\frac{2i}{A}
     \left(
     \begin{matrix}
     -\frac{\overline{f}_{2}(-x,t)}{\delta(0,\mu)}+O(\xi) & -\frac{A}{2i\xi}\delta(0,\mu)\overline{f}_{2}(-x,t)+O(\xi)\\
     -\frac{\overline{f}_{1}(-x,t)}{\delta(0,\mu)}+O(\xi) & -\frac{A}{2i\xi}\delta(0,\mu)\overline{f}_{1}(-x,t)+O(\xi)
     \end{matrix}
     \right),\quad \xi\rightarrow-i0.\label{4-20b}
\end{align}
\end{subequations}
\end{enumerate}
\end{RHP}

Moreover, it can be noted that the singularity conditions at $\xi=0$ in both cases can be reduced to the same residue condition
\begin{align}\label{4-21}
\res_{\xi=0}\left[\widehat{M}(x,t,\xi)\right]_{2}=c_{0}(\mu)\left[\widehat{M}(x,t,0)\right]_{1},
\end{align}
with $c_{0}(\mu)=\frac{A\delta^{2}(0,\mu)}{2i}$.

\begin{figure}[H]
      \centering
\begin{tikzpicture}[scale=1.5]
\draw[->,dashed](3,0)--(4,0)node[right]{ \textcolor{black}{$\re \xi$}};
\draw[->,dashed](0.5,-2)--(0.5,2)node[right]{\textcolor{black}{$\im \xi$}};
\draw[->][dashed](-3,0)--(-2,0);
\draw[-][dashed](-2,0)--(-1,0);
\draw[->][dashed](-1,0)--(0,0);
\draw[-][dashed](0,0)--(1,0);
\draw[->][thick](0,1)--(0.5,0.5);
\draw[-][thick](0.5,0.5)--(1,0);
\draw[->][thick](0,-1)--(0.5,-0.5);
\draw[-][thick](0.5,-0.5)--(1,0);
\draw[-][thick](1.5,-0.5)--(1,0);
\draw[-][thick](1.5,0.5)--(1,0);
\draw[-][thick](2,0)--(1.5,0.5);
\draw[-][thick](0,1)--(-0.5,0.5);
\draw[->][thick](-1,0)--(-0.5,0.5);
\draw[-][thick](0,-1)--(-0.5,-0.5);
\draw[->][thick](-1,0)--(-0.5,-0.5);
\draw[->][thick](-2,1)--(-1.5,0.5);
\draw[-][thick](-1,0)--(-1.5,0.5);
\draw[-][thick](-1.5,0.5)--(-2,1);
\draw[->][thick](-2,-1)--(-1.5,-0.5);
\draw[->][thick](1,0)--(1.25,0.25);
\draw[->][thick](1,0)--(1.25,-0.25);
\draw[->][thick](1.5,0.5)--(1.75,0.25);
\draw[->][thick](1.5,-0.5)--(1.75,-0.25);
\draw[-][thick](-1,0)--(-1.5,-0.5);
\draw[-][thick](-1.5,-0.5)--(-2,-1);
\draw [-,blue, dashed] (2,0) to [out=90,in=-160] (4,1.5);
\draw [-,blue, dashed] (2,0) to [out=-90,in=160] (4,-1.5);
\draw [-,blue, dashed] (-1,0) to [out=90,in=-20] (-3,1.5);
\draw [-,blue, dashed] (-1,0) to [out=-90,in=20] (-3,-1.5);
\draw [-,blue, dashed] (1,0) to [out=100,in=-80] (0.6,1);
\draw [-,blue, dashed] (1,0) to [out=-100,in=90] (0.6,-1);
\draw[-][blue, dashed](0.6,1)to [out=100,in=-90] (0.5,2);
\draw[-][blue, dashed](0.6,-1)to [out=90,in=-90] (0.5,-2);
\draw[->][dashed](1,0)--(2,0);
\draw[fill] (1,0) circle [radius=0.035];
\draw[fill] (0.5,0) circle [radius=0.035]node[below]{$0$};
\draw[fill,red] (2,0) circle [radius=0.035]node[below]{$\lambda_{1}$};
\draw[fill,red] (1,0) circle [radius=0.035]node[below]{$\lambda_{2}$};
\draw[fill,red] (-1,0) circle [radius=0.035]node[below]{$\lambda_{3}$};
\draw[->][dashed](2,0)--(3,0);
\draw[fill] (2,1.5) node{$\Omega_{0}$};
\draw[fill] (2,-1.5) node{$\Omega^{*}_{0}$};
\draw[fill] (0,0.5) node{$\Omega_{1}$};
\draw[fill] (0,-0.5) node{$\Omega^{*}_{1}$};
\draw[fill] (1.5,-0.25) node{$\Omega^{*}_{2}$};
\draw[fill] (1.5,0.25) node{$\Omega_{2}$};
\draw[fill] (-2,-0.5) node{$\Omega^{*}_{2}$};
\draw[fill] (-2,0.5) node{$\Omega_{2}$};
\draw[fill] (3,0.5) node{$\Omega_{1}$};
\draw[fill] (3,-0.5) node{$\Omega^{*}_{1}$};
\draw[-][thick](1.5,-0.5)--(2,0);
\draw[->][thick](2,0)--(2.5,0.5);
\draw[-][thick](2.5,0.5)--(3,1);
\draw[->][thick](2,0)--(2.5,-0.5);
\draw[-][thick](2.5,-0.5)--(3,-1);
\draw[fill] (-1.4,0.7) node{$\Upsilon_{2}$};
\draw[fill] (-1.4,-0.7) node{$\Upsilon^{*}_{2}$};
\draw[fill] (-0.6,0.7) node{$\Upsilon_{1}$};
\draw[fill] (-0.6,-0.7) node{$\Upsilon^{*}_{1}$};
\draw[fill] (0.6,0.7) node{$\Upsilon_{1}$};
\draw[fill] (0.6,-0.7) node{$\Upsilon^{*}_{1}$};
\draw[fill] (1.2,0.5) node{$\Upsilon_{2}$};
\draw[fill] (1.2,-0.5) node{$\Upsilon^{*}_{2}$};
\draw[fill] (1.9,0.5) node{$\Upsilon_{2}$};
\draw[fill] (1.9,-0.5) node{$\Upsilon^{*}_{2}$};
\draw[fill] (2.4,0.7) node{$\Upsilon_{1}$};
\draw[fill] (2.4,-0.7) node{$\Upsilon^{*}_{1}$};
\end{tikzpicture}
          \caption{ \footnotesize The regions $\Omega_{j}$, $\Omega^{*}_{j}$, $j=0,1,2$ and the contours
$\Upsilon=\Upsilon_{j}\cup\Upsilon^{*}_{j}$, $j=1,2$.}
\end{figure}
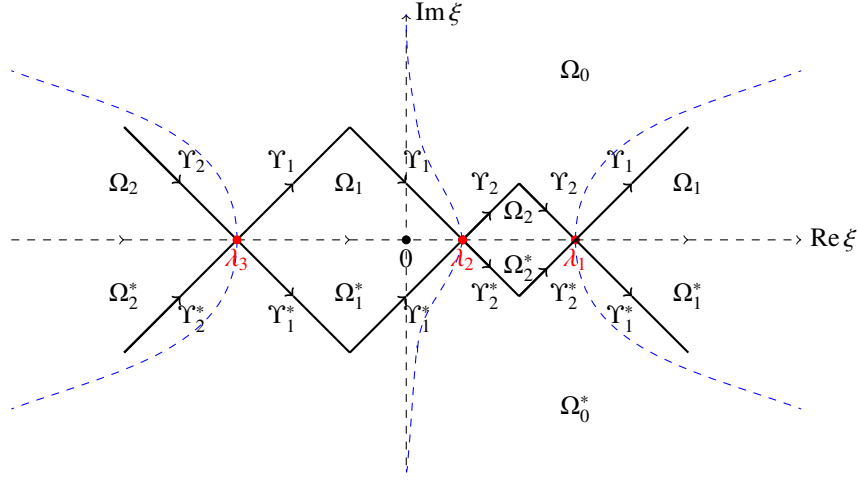

\subsection{Regular RH problem}\quad

In the discussion of this subsection, we will use the BP factor to transform the RH problem \ref{rhp3} with the two residue conditions \eqref{4-19} and \eqref{4-21} into a regular RH problem without residue conditions.

After making the transformation
\begin{align}\label{4-22}
\widehat{M}(x,t,\xi)=B(x,t,\xi)\widehat{M}^{r}(x,t,\xi)\left(
         \begin{array}{cc}
         1 & 0\\
         0 & \frac{\xi-i\xi_{1}}{\xi} \\
         \end{array}
         \right),\quad \xi\in\mathbb{C},
\end{align}
then solving the solution $\widehat{M}(x,t,\xi)$ of RH problem \ref{rhp3} can be converted into solving the solution $\widehat{M}^{r}(x,t,\xi)$ of the regular RH problem. There $B(x,t,\xi)$ has the form $B(x,t,\xi)=I+\frac{i\xi_{1}}{\xi-i\xi_{1}}P(x,t)$, and $B(x,t,\xi)$, $P(x,t)$ are the BP factors. Now we will introduce the regular RH problem as follows
\begin{RHP}\label{rhp5}
Find a matrix function $\widehat{M}^{r}(x,t,\xi)$ admits the following relations
\begin{enumerate}[(i)]
\item $\widehat{M}^{r}(x,t,\xi)$ is analytic for $\xi\in\mathbb{C}\backslash\Upsilon$.
\item Jump conditions:
\begin{equation}\label{4-23}
\widehat{M}^{r}_{+}(x,t,\xi)=\widehat{M}^{r}_{-}(x,t,\xi)\widehat{J}^{r}(x,t,\xi),
\end{equation}
where
\begin{align}\label{4-24}
\widehat{J}^{r}(x,t,\xi)=\left(
         \begin{array}{cc}
         1 & 0\\
         0 & \frac{\xi-i\xi_{1}}{\xi} \\
         \end{array}
         \right)\widehat{J}(x,t,\xi)\left(
         \begin{array}{cc}
         1 & 0\\
         0 & \frac{\xi}{\xi-i\xi_{1}} \\
         \end{array}
         \right),
\end{align}
\begin{align}\label{4-25}
\widehat{J}^{r}(x,t,\xi)=\left\{ \begin{aligned}
         &\left(
         \begin{array}{cc}
         1 & -\frac{r_{2}^{r}(\xi)\delta^{2}(\xi,\mu)}{1+r_{1}^{r}(\xi)r_{2}^{r}(\xi)}e^{-2it\theta}\\
         0 & 1 \\
         \end{array}
         \right)
         ,& \quad &\xi\in \Upsilon_{2},\\
         &\left(
         \begin{array}{cc}
         1 & 0 \\
         -r_{1}^{r}(\xi)\delta^{-2}(\xi,\mu)e^{2it\theta} & 1 \\
         \end{array}
         \right),& \quad &\xi\in \Upsilon_{1},\\
         &\left(
         \begin{array}{cc}
         1 & r_{2}^{r}(\xi)\delta^{2}(\xi,\mu)e^{-2it\theta} \\
         0 & 1 \\
         \end{array}
         \right),& \quad &\xi\in \Upsilon^{*}_{1},\\
         &\left(
         \begin{array}{cc}
         1 & 0 \\
         \frac{r_{1}^{r}(\xi)\delta^{-2}(\xi,\mu)}{1+r_{1}^{r}(\xi)r_{2}^{r}(\xi)}e^{2it\theta} & 1 \\
         \end{array}
         \right),&\quad &\xi\in \Upsilon^{*}_{2},\\
                          \end{aligned} \right.
\end{align}
\begin{align}\label{4-26}
r_{1}^{r}(\xi)=\frac{\xi-i\xi_{1}}{\xi}r_{1}(\xi),\quad
r_{2}^{r}(\xi)=\frac{\xi}{\xi-i\xi_{1}}r_{2}(\xi).
\end{align}
\item Normalization condition at $\xi=\infty$:
\begin{align}\label{4-27}
\widehat{M}^{r}(x,t,\xi)=I+O(\frac{1}{\xi}),\quad \xi\rightarrow\infty.
\end{align}
\item The elements of matrix-value factor $P(x,t)$ are determined by $\widehat{M}^{r}(x,t,\xi)$
\begin{subequations}\label{4-28}
\begin{align}
P_{12}(x,t)&=\frac{u_{1}(x,t)v_{1}(x,t)}{u_{1}(x,t)v_{2}(x,t)-u_{2}(x,t)u_{1}(x,t)},\label{4-28a}\\
P_{21}(x,t)&=-\frac{u_{2}(x,t)v_{2}(x,t)}{u_{1}(x,t)v_{2}(x,t)-u_{2}(x,t)u_{1}(x,t)},\label{4-28b}
\end{align}
\end{subequations}
there the elements $P_{ij}(x,t)$ $(i,j=1,2)$ represent the position of row $i$ and column $j$ in matrix $P_{ij}(x,t)$, and $u(x,t)=(u_{1}(x,t),u_{2}(x,t))^{T}$, $v(x,t)=(v_{1}(x,t),v_{2}(x,t))^{T}$ are given by
\begin{subequations}\label{4-29}
\begin{align}
u(x,t)&=i\xi_{1}\left[\widehat{M}^{r}(x,t,i\xi_{1})\right]_{1}-c_{1}(x,t)\left[\widehat{M}^{r}(x,t,i\xi_{1})\right]_{2},\label{4-29a}\\
v(x,t)&=i\xi_{1}\left[\widehat{M}^{r}(x,t,0)\right]_{2}+c_{0}(\xi)\left[\widehat{M}^{r}(x,t,0)\right]_{1}.\label{4-29b}
\end{align}
\end{subequations}
\end{enumerate}
\end{RHP}

\begin{proof}
The item (i)-(iii) are easy to check. As for the item (iv), by writing the terms of the transformation \eqref{4-22} in matrices form, and pay attention to the residue conditions \eqref{4-19} and \eqref{4-21}, the relation \eqref{4-28} can be obtained by direct calculation.
\end{proof}

\begin{rem}
It should be noticed that from the relations $r_{1}(\xi)=\overline{r_{1}(-\bar{\xi})}$ and $r_{2}(\xi)=\overline{r_{2}(-\bar{\xi})}$ we can obtain
\begin{align}\label{4-30}
r_{1}^{r}(\xi)=\overline{r_{1}^{r}(-\bar{\xi})},\quad
r_{2}^{r}(\xi)=\overline{r_{2}^{r}(-\bar{\xi})}.
\end{align}
Then by the symmetry $\delta(\xi,\mu)=\overline{\delta(-\bar{\xi},\mu)}$ and \eqref{4-23}, we have the regular RH problem admits the symmetry
\begin{align}\label{4-31}
\widehat{M}^{r}(x,t,\xi)=\overline{\widehat{M}^{r}(x,t,-\bar{\xi})}.
\end{align}
\end{rem}
Then, the relation between the solution $q(x,t)$ and $\widehat{M}^{r}(x,t,\xi)$ can be acquired in the following proposition. From the rough approximation $\widehat{M}^{r}(x,t,\xi)\approx I$ as $t\rightarrow\infty$, the rough error estimate of large-$t$ asymptotics of $q(x,t)$ can be obtained.

\begin{prop}\label{prop2}
The solution $q(x,t)$ of the Cauchy problem \eqref{e1} and \eqref{e4} can be indicated in terms of
\begin{subequations}\label{4-32}
\begin{align}
q(x,t)&=-2\xi_{1}P_{12}(x,t)+2i\lim_{\xi\to\infty}\xi\widehat{M}^{r}_{12}(x,t,\xi),\quad x>0,\label{4-32a}\\
q(x,t)&=-2\xi_{1}\overline{P_{21}(-x,t)}-2i\lim_{\xi\to\infty}\xi\overline{\widehat{M}^{r}_{21}(-x,t,\xi)},\quad x<0.\label{4-32b}
\end{align}
\end{subequations}
Furthermore, when $t\rightarrow\infty$, we have a rough estimate about $q(x,t)$ as follows
\begin{subequations}\label{4-33}
\begin{align}
q(x,t)&=A\delta^{2}(0,\mu)+o(1),\quad x>0,\\
q(x,t)&=o(1),\quad x<0,
\end{align}
\end{subequations}
along any ray $\mu=\frac{x}{t}=const\in\left(\epsilon,\sqrt{\frac{1}{27\gamma}}+\epsilon\right)$ or $\mu\in\left(-\sqrt{\frac{1}{27\gamma}}+\epsilon,\epsilon\right)$.
\end{prop}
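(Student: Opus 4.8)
The plan is to prove the two assertions separately: first the exact representation \eqref{4-32}, obtained by tracking the solution through the chain of transformations $M\mapsto\widetilde M\mapsto\widehat M\mapsto\widehat M^{r}$; and then the rough estimate \eqref{4-33}, obtained by inserting the leading-order approximation $\widehat M^{r}\approx I$ into the formulas \eqref{4-28}--\eqref{4-29} for the BP factor $P$.

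For the representation I would argue as follows. In the unbounded sectors $\Omega_{0}\cup\Omega_{0}^{*}$ the triangular deformation \eqref{4-15} is trivial, so near $\xi=\infty$ one has $\widehat M=\widetilde M=M\delta^{-\sigma_{3}}$, i.e. $M=\widehat M\,\delta^{\sigma_{3}}$. Since $\delta(\xi,\mu)\to 1$ as $\xi\to\infty$ while the off-diagonal entries are $O(\xi^{-1})$, the diagonal factor $\delta^{\sigma_{3}}$ does not affect the coefficient of $\xi^{-1}$; hence $\lim_{\xi\to\infty}\xi M_{12}=\lim_{\xi\to\infty}\xi\widehat M_{12}$ and likewise for the $(2,1)$ entry. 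Next I would expand \eqref{4-22} at infinity, using $B=I+\frac{i\xi_{1}}{\xi}P+O(\xi^{-2})$ and the exact identity $\mathrm{diag}(1,\tfrac{\xi-i\xi_{1}}{\xi})=I-\frac{i\xi_{1}}{\xi}\,\mathrm{diag}(0,1)$, and collect the $\xi^{-1}$ terms: since $\mathrm{diag}(0,1)$ only affects the $(2,2)$ slot, a short computation gives $\lim_{\xi\to\infty}\xi\widehat M_{12}=i\xi_{1}P_{12}+\lim_{\xi\to\infty}\xi\widehat M^{r}_{12}$ and $\lim_{\xi\to\infty}\xi\widehat M_{21}=i\xi_{1}P_{21}+\lim_{\xi\to\infty}\xi\widehat M^{r}_{21}$. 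Substituting into \eqref{u35} gives \eqref{4-32a} for $x>0$, and substituting into \eqref{u36} evaluated at $-x$ (using reality of $\xi_{1}$, so that $\overline{i\xi_{1}P_{21}}=-i\xi_{1}\overline{P_{21}}$) gives \eqref{4-32b} for $x<0$.

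For the rough estimate I would use that, by the signature table in Figure 2 and the factors $e^{\pm 2it\theta}$ together with $\delta^{\pm2}$ in \eqref{4-25}, every jump matrix $\widehat J^{r}$ tends to $I$ away from the stationary points as $t\to\infty$, so that $\widehat M^{r}\approx I$ to leading order. In particular $\lim_{\xi\to\infty}\xi\widehat M^{r}_{12}$ and $\lim_{\xi\to\infty}\xi\widehat M^{r}_{21}$ are $o(1)$, and $\widehat M^{r}(x,t,i\xi_{1})\approx I$, $\widehat M^{r}(x,t,0)\approx I$. Plugging the latter into \eqref{4-29} yields the leading values $u\approx(i\xi_{1},-c_{1})^{T}$ and $v\approx(c_{0},i\xi_{1})^{T}$, whence from \eqref{4-28} (reading the common denominator as $u_{1}v_{2}-u_{2}v_{1}$) one finds $P_{12}\approx\frac{i\xi_{1}c_{0}}{c_{0}c_{1}-\xi_{1}^{2}}$ and $P_{21}\approx\frac{i\xi_{1}c_{1}}{c_{0}c_{1}-\xi_{1}^{2}}$. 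The decisive observation is that $|c_{1}(x,t)|=\big|\tfrac{r_{1}}{\dot a_{1}(i\xi_{1})\delta^{2}(i\xi_{1})}\big|\,e^{-2\xi_{1}x}\to 0$ exponentially along a ray with $x>0$; thus $P_{12}\to -\frac{ic_{0}}{\xi_{1}}=-\frac{A\delta^{2}(0,\mu)}{2\xi_{1}}$ using $c_{0}=\frac{A\delta^{2}(0,\mu)}{2i}$, and \eqref{4-32a} collapses to $q(x,t)=-2\xi_{1}P_{12}+o(1)=A\delta^{2}(0,\mu)+o(1)$. For $x<0$ one evaluates $P_{21}$ at $-x>0$, where again $c_{1}(-x,t)\to0$, so $P_{21}(-x,t)\to0$ and \eqref{4-32b} gives $q(x,t)=o(1)$.

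The main obstacle is the honest justification of the replacement $\widehat M^{r}\approx I$. Strictly this is only the zeroth-order term, and securing the $o(1)$ remainder requires controlling the contributions of the jumps concentrated near the stationary points $\lambda_{1},\lambda_{2},\lambda_{3}$, which do not themselves decay. At this stage I would treat that as the declared \emph{rough} input, deferring the precise local parametrix construction via parabolic cylinder functions and the Beals--Coifman error analysis to the later subsections, and simply record that the stationary-point contributions are subleading; this is enough to conclude that the constant term $A\delta^{2}(0,\mu)$ (respectively $0$) survives modulo $o(1)$.
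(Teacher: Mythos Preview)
Your proposal is correct and follows essentially the same approach as the paper: you unwind the chain $M\to\widetilde M\to\widehat M\to\widehat M^{r}$ at $\xi=\infty$ to extract the $\xi^{-1}$ coefficient, and then insert the rough approximation $\widehat M^{r}\approx I$ into the BP data \eqref{4-28}--\eqref{4-29}, using the exponential decay of $c_{1}$ on the relevant half-line to obtain the leading constants. The paper's proof is identical in substance (it records \eqref{4-34} and then \eqref{4-35}--\eqref{4-38}); your version is slightly more explicit in keeping $c_{1}$ in $u_{2}$ before passing to the limit, but this is a cosmetic difference.
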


\begin{proof}
Taking into account the transformation \eqref{4-22} and $B(x,t,\xi)=I+\frac{i\xi_{1}}{\xi-i\xi_{1}}P(x,t)$, we gain the asymptotic expansion of $\widehat{M}(x,t,\xi)$ at $\xi\rightarrow\infty$
\begin{align}\label{4-34}
\widehat{M}(x,t,\xi)=\left(
         \begin{array}{cc}
         1 & 0 \\
         0 & 1-\frac{i\xi_{1}}{\xi} \\
         \end{array}
         \right)+\frac{i\xi_{1}}{\xi-i\xi_{1}}P(x,t)
         +\frac{\widehat{M}^{r}_{1}(x,t)}{\xi}+O(\xi^{-2}),\quad \xi\rightarrow\infty,
\end{align}
where $\widehat{M}^{r}(x,t,\xi)=I+\frac{\widehat{M}^{r}_{1}(x,t)}{\xi}+O(\xi^{-2})$, $\xi\rightarrow\infty$. Then combine relation \eqref{u35}, \eqref{u36} and recall the transformations \eqref{4-8}, \eqref{4-15} to obtain \eqref{4-32}.

The rough approximation $\widehat{M}^{r}(x,t,\xi)\approx I$ as $t\rightarrow\infty$ indicate that, for $x>0$,
\begin{align}\label{4-35}
\left(
         \begin{array}{cc}
         u_{1}(x,t) \\
         u_{2}(x,t) \\
         \end{array}
         \right)\approx
         \left(
         \begin{array}{cc}
         i\xi_{1} \\
         0 \\
         \end{array}
         \right),\quad
\left(
         \begin{array}{cc}
         v_{1}(x,t) \\
         v_{2}(x,t) \\
         \end{array}
         \right)\approx
\left(
         \begin{array}{cc}
         c_{0}(\mu) \\
         i\xi_{1} \\
         \end{array}
         \right),
\end{align}
hence,
\begin{align}\label{4-36}
q(x,t)\approx
-2\xi_{1}\frac{i\xi_{1}c_{0}(\mu)}{-\xi^{2}_{1}+c_{0}(\mu)c_{1}(x,t)}\approx
2ic_{0}(\mu)=A\delta^{2}(0,\mu).
\end{align}
For $x<0$,
\begin{align}\label{4-37}
\left(
         \begin{array}{cc}
         \overline{u_{1}(-x,t)} \\
         \overline{u_{2}(-x,t)} \\
         \end{array}
         \right)\approx
         \left(
         \begin{array}{cc}
         i\xi_{1} \\
         0 \\
         \end{array}
         \right),\quad
\left(
         \begin{array}{cc}
         \overline{v_{1}(-x,t)} \\
         \overline{v_{2}(-x,t)} \\
         \end{array}
         \right)\approx
\left(
         \begin{array}{cc}
         c_{0}(\mu) \\
         i\xi_{1} \\
         \end{array}
         \right),
\end{align}
therefore,
\begin{align}\label{4-38}
q(x,t)\approx
2\xi_{1}\frac{-\overline{c_{1}(-x,t)}(-i\xi_{1})}{-\xi^{2}_{1}+\overline{c_{0}(-\mu)}
\overline{c_{1}(-x,t)}}\approx 0.
\end{align}
\end{proof}

\subsection{Local models near the saddle points}\quad

It is obviously that we can prove the jump matrix $\widehat{J}^{r}(x,t,\xi)$ approach to identity matrix as $t\rightarrow\infty$, but the neighborhood of saddle points $\lambda_{1}$, $\lambda_{2}$ and $\lambda_{3}$ need to be additionally analysed. The goal of this subsection is to gain a good approximation of function $\widehat{M}^{r}(x,t,\xi)$ near these saddle points by the parabolic cylinder functions to obtain the long-time asymptotics of $\widehat{M}^{r}(x,t,\xi)$.

In the following discussion, we take $r_{j}(\xi)\neq0$ $(j=1,2)$. If there is as least one of the $r_{j}(\xi)$, $j=1,2$ equal to zero, then $v(\lambda_{j})=0$. It is enough to estimate the large-time asymptotic solution of $\widehat{J}^{r}(x,t,\xi)$ at $\xi=0$, $\xi=i\xi_{1}$ and $\xi=\infty$ by considering \eqref{4-32}. Furthermore, we find that this RH problem is similar to the case under zero boundary condition, hence we will refer to the idea of \cite{Wang-Liu-2022} to conduct the following analysis.

The scaling transformation are defined as follows
\begin{subequations}\label{4-39}
\begin{align}
\xi&\rightarrow\lambda_{1}+\frac{\tau}{\sqrt{4t(48\gamma\lambda_{1}^{2}-1)}},\\
\xi&\rightarrow\lambda_{2}+\frac{\tau}{\sqrt{4t(1-48\gamma\lambda_{2}^{2})}},\\
\xi&\rightarrow\lambda_{3}+\frac{\tau}{\sqrt{4t(48\gamma\lambda_{3}^{2}-1)}}.
\end{align}
\end{subequations}
Besides, the function $\delta$ can be written as
\begin{align}\label{4-40}
\begin{split}
\delta(\xi(\tau),\mu)&=\left(\frac{\tau}{\sqrt{4t(48\gamma\lambda_{1}^{2}-1)}}\right)^{iv(\lambda_{1})}
\left(\sqrt{\frac{48\gamma\lambda_{3}^{2}-1}{1-48\gamma\lambda_{2}^{2}}} \right)^{-iv(\lambda_{1})}e^{\chi_{1}(\xi)},\\
&=\left(\frac{\tau}{\sqrt{4t(48\gamma\lambda_{3}^{2}-1)}}\right)^{iv(\lambda_{3})}
\left(\sqrt{\frac{48\gamma\lambda_{1}^{2}-1}{1-48\gamma\lambda_{2}^{2}}} \right)^{-iv(\lambda_{2})}e^{\chi_{2}(\xi)},\\
&=\left(\frac{\tau}{\sqrt{4t(48\gamma\lambda_{3}^{2}-1)}}\right)^{iv(\lambda_{3})}
\left(\sqrt{\frac{48\gamma\lambda_{1}^{2}-1}{1-48\gamma\lambda_{2}^{2}}} \right)^{-iv(\lambda_{3})}e^{\chi_{3}(\xi)},
\end{split}
\end{align}
there $v(\lambda_{j})$ and $\chi_{j}(\xi)$, $(j=1,2,3)$ are shown in \eqref{4-6} and \eqref{4-7}. We define $D_{\varepsilon}(\lambda_{j})$, $j=1,2,3$ to express the open disk with radius $\varepsilon$ and centered at $\lambda_{j}$. Also we introduce the contours
\begin{align}\label{4-41}
\Upsilon_{\lambda_{j}}=\Upsilon\cap D_{\varepsilon}(\lambda_{j})
=\Upsilon_{1,\varepsilon_{j}}\cup\Upsilon_{2,\varepsilon_{j}}
\cup\Upsilon^{*}_{1,\varepsilon_{j}}\cup\Upsilon^{*}_{2,\varepsilon_{j}},
\end{align}
for $j=1,2,3$, there $\Upsilon=\Upsilon_{1}\cup\Upsilon_{2}\cup\Upsilon^{*}_{1}\cup\Upsilon^{*}_{2}$.

The next step, we introduce the local parametrix $\widehat{M}^{r}_{\lambda_{j}}(x,t,\xi)$, $j=1,2,3$
\begin{align}\label{4-42}
\widehat{M}^{r}_{\lambda_{1}}(x,t,\xi)=\Lambda_{1}(\mu,t)
\widehat{m}^{pc}_{\lambda_{1}}(\lambda_{1},\tau(\xi))\Lambda^{-1}_{1}(\mu,t),
\end{align}
with
\begin{align}\label{l1}
\Lambda_{1}=e^{[\chi_{1}(\xi)+\phi_{1}(\mu,\tau(\xi))]\sigma_{3}}
\left(\frac{1-48\gamma\lambda_{2}^{2}}{4t(48\gamma\lambda_{1}^{2}-1)(48\gamma\lambda_{3}^{2}-1)}\right)
^{\frac{i}{2}v(\lambda_{1})\sigma_{3}},
\end{align}
\begin{align}\label{phi-1}
\begin{split}
\phi_{1}(\mu,\tau(\xi))=&-\frac{i\gamma\tau^{4}}{2t(48\gamma\lambda_{1}^{2}-1)^{2}}
-\frac{4i\gamma\lambda_{1}\tau^{3}}{\sqrt{t(48\gamma\lambda_{1}^{2}-1)^{3}}}\\
&+\frac{i\tau^{2}}{4t(48\gamma\lambda_{1}^{2}-1)}
-\frac{i(16\gamma\lambda^{2}_{1}-t)\lambda_{1}\tau}{\sqrt{t(48\gamma\lambda_{1}^{2}-1)}}
-4\gamma\lambda_{1}^{4}+\frac{1}{2}\lambda_{1}^{2}.
\end{split}
\end{align}
\begin{align}\label{4-43}
\widehat{M}^{r}_{\lambda_{2}}(x,t,\xi)=\Lambda_{2}(\mu,t)
\widehat{m}^{pc}_{\lambda_{2}}(\lambda_{2},\tau(\xi))\Lambda^{-1}_{2}(\mu,t),
\end{align}
with
\begin{align}\label{l2}
\Lambda_{2}=e^{[\chi_{2}(\xi)+\phi_{2}(\mu,\tau(\xi))]\sigma_{3}}
\left(\frac{1}{4t(48\gamma\lambda_{3}^{2}-1)}\right)
^{-\frac{i}{2}v(\lambda_{3})\sigma_{3}}
\left(\frac{1-48\gamma\lambda_{2}^{2}}{48\gamma\lambda_{1}^{2}-1}\right)
^{-\frac{i}{2}v(\lambda_{2})\sigma_{3}},
\end{align}
\begin{align}\label{phi-2}
\begin{split}
\phi_{2}(\mu,\tau(\xi))=&-\frac{i\gamma\tau^{4}}{2t(1-48\gamma\lambda_{2}^{2})^{2}}
-\frac{4i\gamma\lambda_{2}\tau^{3}}{\sqrt{t(1-48\gamma\lambda_{2}^{2})^{3}}}\\
&+\frac{i\tau^{2}}{4t(1-48\gamma\lambda_{2}^{2})}
-\frac{i(16\gamma\lambda^{2}_{2}-t)\lambda_{2}\tau}{\sqrt{t(1-48\gamma\lambda_{2}^{2})}}
-4\gamma\lambda_{2}^{4}+\frac{1}{2}\lambda_{2}^{2}.
\end{split}
\end{align}
\begin{align}\label{4-44}
\widehat{M}^{r}_{\lambda_{3}}(x,t,\xi)=\Lambda_{3}(\mu,t)
\widehat{m}^{pc}_{\lambda_{3}}(\lambda_{3},\tau(\xi))\Lambda^{-1}_{3}(\mu,t),
\end{align}
with
\begin{align}\label{l3}
\Lambda_{3}=e^{[\chi_{3}(\xi)+\phi_{3}(\mu,\tau(\xi))]\sigma_{3}}
\left(\frac{1-48\gamma\lambda_{2}^{2}}{4t(48\gamma\lambda_{3}^{2}-1)(48\gamma\lambda_{1}^{2}-1)}\right)
^{\frac{i}{2}v(\lambda_{3})\sigma_{3}},
\end{align}
\begin{align}\label{phi-3}
\begin{split}
\phi_{3}(\mu,\tau(\xi))=&-\frac{i\gamma\tau^{4}}{2t(48\gamma\lambda_{3}^{2}-1)^{2}}
-\frac{4i\gamma\lambda_{3}\tau^{3}}{\sqrt{t(48\gamma\lambda_{3}^{2}-1)^{3}}}\\
&+\frac{i\tau^{2}}{4t(48\gamma\lambda_{3}^{2}-1)}
-\frac{i(16\gamma\lambda^{2}_{3}-t)\lambda_{3}\tau}{\sqrt{t(48\gamma\lambda_{3}^{2}-1)}}
-4\gamma\lambda_{3}^{4}+\frac{1}{2}\lambda_{3}^{2}.
\end{split}
\end{align}

There the parameterized RH problems $\widehat{m}^{pc}_{\lambda_{j}}(\lambda_{j},\tau(\xi))$, $j=1,2,3$ can be obtained by
\begin{align}\label{4-45}
\left\{ \begin{aligned}
         &\widehat{m}^{pc}_{\lambda_{j},+}(\lambda_{j},\tau)=
         \widehat{m}^{pc}_{\lambda_{j},-}(\lambda_{j},\tau)J^{pc}_{\lambda_{j}}(\tau),
         \quad \tau\in\Sigma^{pc}_{\lambda_{j}},\\
         &\widehat{m}^{pc}_{\lambda_{j}}(\lambda_{j},\tau)\rightarrow I,\quad \tau\rightarrow\infty.
         \end{aligned} \right.
\end{align}
There the RH problem $\widehat{m}^{pc}_{\lambda_{j}}(\lambda_{j},\tau(\xi))$ can be solved explicitly in Appendix A. The jump contours $\Upsilon_{\lambda_{s}}$ $(s=1,3)$ and $\Upsilon_{\lambda_{2}}$ of local parametrix $\widehat{M}^{r}_{\lambda_{j}}(x,t,\xi)$, $j=1,2,3$ are defined in Figure 4. and Figure 5.
\begin{align}\label{4-46}
J^{pc}_{\lambda_{s}}(\tau)=\left\{ \begin{aligned}
         &\left(
         \begin{array}{cc}
         1 & -\frac{r_{2}^{r}(\lambda_{s})}{1+r_{1}^{r}(\lambda_{s})r_{2}^{r}(\lambda_{s})}e^{-\frac{i}{2}\tau^{2}}\tau^{2iv(\lambda_{s})}\\
         0 & 1 \\
         \end{array}
         \right)
         ,&\quad &\tau\in \Sigma_{2,\varepsilon_{s}},\\
         &\left(
         \begin{array}{cc}
         1 & 0 \\
         -r_{1}^{r}(\lambda_{s})e^{\frac{i}{2}\tau^{2}}\tau^{-2iv(\lambda_{s})} & 1 \\
         \end{array}
         \right),& \quad &\tau\in \Sigma_{1,\varepsilon_{s}},\\
         &\left(
         \begin{array}{cc}
         1 & r_{2}^{r}(\lambda_{s})e^{-\frac{i}{2}\tau^{2}}\tau^{2iv(\lambda_{s})} \\
         0 & 1 \\
         \end{array}
         \right),& \quad &\tau\in \Sigma^{*}_{1,\varepsilon_{s}},\\
         &\left(
         \begin{array}{cc}
         1 & 0 \\
         \frac{r_{1}^{r}(\lambda_{s})}{1+r_{1}^{r}(\lambda_{s})r_{2}^{r}(\lambda_{s})}
         e^{\frac{i}{2}\tau^{2}}\tau^{-2iv(\lambda_{s})} & 1 \\
         \end{array}
         \right),& \quad &\tau\in \Sigma^{*}_{2,\varepsilon_{s}}.\\
                          \end{aligned} \right.
\end{align}
\begin{align}\label{4-47}
J^{pc}_{\lambda_{2}}(\tau)=\left\{ \begin{aligned}
         &\left(
         \begin{array}{cc}
         1 & 0 \\
         -r_{1}^{r}(\lambda_{2})e^{-\frac{i}{2}\tau^{2}}\tau^{2iv(\lambda_{2})} & 1 \\
         \end{array}
         \right)
         ,& \quad  &\tau\in \Sigma_{2,\varepsilon_{2}},\\
         &\left(
         \begin{array}{cc}
         1 & -\frac{r_{2}^{r}(\lambda_{2})}{1+r_{1}^{r}(\lambda_{2})r_{2}^{r}(\lambda_{2})}e^{\frac{i}{2}\tau^{2}}\tau^{-2iv(\lambda_{2})} \\
         0 & 1 \\
         \end{array}
         \right),& \quad &\tau\in \Sigma_{1,\varepsilon_{2}},\\
         &\left(
         \begin{array}{cc}
         1 & 0 \\
         \frac{r_{1}^{r}(\lambda_{2})}{1+r_{1}^{r}(\lambda_{2})r_{2}^{r}(\lambda_{2})}
         e^{-\frac{i}{2}\tau^{2}}\tau^{2iv(\lambda_{2})} & 1 \\
         \end{array}
         \right),& \quad &\tau\in \Sigma^{*}_{1,\varepsilon_{2}},\\
         &\left(
         \begin{array}{cc}
         1 & r_{2}^{r}(\lambda_{2})e^{\frac{i}{2}\tau^{2}}\tau^{-2iv(\lambda_{2})} \\
         0 & 1 \\
         \end{array}
         \right),& \quad &\tau\in \Sigma^{*}_{2,\varepsilon_{2}}.\\
                          \end{aligned} \right.
\end{align}

\begin{figure}[htp]
      \centering
\begin{tikzpicture}[scale=0.6]
\draw[-][pink,dashed](-6,0)--(6,0);
\draw[-][thick](-4,-4)--(4,4);
\draw[-][thick](-4,4)--(4,-4);
\draw[->][thick](2,2)--(3,3);
\draw[->][thick](-4,4)--(-3,3);
\draw[->][thick](-2,-2)--(-3,-3);
\draw[->][thick](4,-4)--(3,-3);
\draw[fill] (4,3)node[below]{$\Upsilon_{1,\varepsilon_{s}}$};
\draw[fill] (4,-3)node[above]{$\Upsilon^{*}_{1,\varepsilon_{s}}$};
\draw[fill] (-4,3)node[below]{$\Upsilon_{2,\varepsilon_{s}}$};
\draw[fill] (-4,-3)node[above]{$\Upsilon^{*}_{2,\varepsilon_{s}}$};
\draw[fill] (0,0)node[below]{$\lambda_{s}$};
\draw[fill] (2,0)node[below]{$\Omega^{*}_{1,\varepsilon_{s}}$};
\draw[fill] (2,0)node[above]{$\Omega_{1,\varepsilon_{s}}$};
\draw[fill] (0,-1)node[below]{$\Omega^{*}_{0,\varepsilon_{s}}$};
\draw[fill] (0,1)node[above]{$\Omega^{*}_{0,\varepsilon_{s}}$};
\draw[fill] (-2,0)node[below]{$\Omega^{*}_{2,\varepsilon_{s}}$};
\draw[fill] (-2,0)node[above]{$\Omega_{2,\varepsilon_{s}}$};
\end{tikzpicture}
          \caption{ \footnotesize The jump contours and domains of the local parametrix $\widehat{M}^{r}_{\lambda_{s}}(x,t,\xi)$ $(s=1,3)$.}
\end{figure}

\begin{figure}[H]
      \centering
\begin{tikzpicture}[scale=0.6]
\draw[-][pink,dashed](-6,0)--(6,0);
\draw[-][thick](-4,-4)--(4,4);
\draw[-][thick](-4,4)--(4,-4);
\draw[->][thick](2,2)--(3,3);
\draw[->][thick](-4,4)--(-3,3);
\draw[->][thick](-2,-2)--(-3,-3);
\draw[->][thick](4,-4)--(3,-3);
\draw[fill] (4,3)node[below]{$\Upsilon_{2,\varepsilon_{2}}$};
\draw[fill] (4,-3)node[above]{$\Upsilon^{*}_{2,\varepsilon_{2}}$};
\draw[fill] (-4,3)node[below]{$\Upsilon_{1,\varepsilon_{2}}$};
\draw[fill] (-4,-3)node[above]{$\Upsilon^{*}_{1,\varepsilon_{2}}$};
\draw[fill] (0,0)node[below]{$\lambda_{2}$};
\draw[fill] (2,0)node[below]{$\Omega^{*}_{2,\varepsilon_{2}}$};
\draw[fill] (2,0)node[above]{$\Omega_{2,\varepsilon_{2}}$};
\draw[fill] (0,-1)node[below]{$\Omega^{*}_{0,\varepsilon_{2}}$};
\draw[fill] (0,1)node[above]{$\Omega^{*}_{0,\varepsilon_{2}}$};
\draw[fill] (-2,0)node[below]{$\Omega^{*}_{1,\varepsilon_{2}}$};
\draw[fill] (-2,0)node[above]{$\Omega_{1,\varepsilon_{2}}$};
\end{tikzpicture}
     \caption{ \footnotesize The jump contours and domains of the local parametrix $\widehat{M}^{r}_{\lambda_{2}}(x,t,\xi)$.}
\end{figure}

$\widehat{m}^{pc}_{\lambda_{j}}(\lambda_{j},\tau(\xi))$, $j=1,2,3$ which can be directly solved by the parabolic cylindrical function are defined by
\begin{align}\label{4-48}
\begin{split}
\left\{ \begin{aligned}
\widehat{m}^{pc}_{\lambda_{j}}(\lambda_{j},\tau)&=
m_{\lambda_{j}}(\lambda_{j},\tau)(G^{l}_{\lambda_{j}})^{-1}(\lambda_{j},\tau),
\quad \tau\in\Omega_{l,\varepsilon_{j}},l=0,1,2,\\
\widehat{m}^{pc}_{\lambda_{j}}(\lambda_{j},\tau)&=
m_{\lambda_{j}}(\lambda_{j},\tau)(G^{l*}_{\lambda_{j}})^{-1}(\lambda_{j},\tau),
\quad \tau\in\Omega^{*}_{l,\varepsilon_{j}},l=0,1,2.
\end{aligned} \right.
\end{split}
\end{align}
We assume that $G^{0}_{\lambda_{s}}=G^{0*}_{\lambda_{s}}=e^{-\frac{i}{4}\tau^{2}\sigma_{3}}\tau^{iv(\lambda_{s})\sigma_{3}}$,
$s=1,3$ and $G^{0}_{\lambda_{2}}=G^{0*}_{\lambda_{2}}=e^{\frac{i}{4}\tau^{2}\sigma_{3}}\tau^{-iv(\lambda_{2})\sigma_{3}}$,
\begin{align*}
\begin{split}
&G^{1}_{\lambda_{s}}=G^{0}_{\lambda_{s}}
\left(
         \begin{array}{cc}
         1 & 0 \\
         -r^{r}_{1}(\lambda_{s}) & 1 \\
         \end{array}
         \right),\quad
G^{1*}_{\lambda_{s}}=G^{0*}_{\lambda_{s}}
\left(
         \begin{array}{cc}
         1 & r^{r}_{2}(\lambda_{s}) \\
         0 & 1 \\
         \end{array}
         \right),\\
&G^{2}_{\lambda_{s}}=G^{0}_{\lambda_{s}}
\left(
         \begin{array}{cc}
         1 & -\frac{r^{r}_{2}(\lambda_{s})}{1+r^{r}_{1}(\lambda_{s})r^{r}_{2}(\lambda_{s})} \\
         0 & 1 \\
         \end{array}
         \right),\quad
G^{2*}_{\lambda_{s}}=G^{0*}_{\lambda_{s}}
\left(
         \begin{array}{cc}
         1 & 0 \\
         \frac{r^{r}_{1}(\lambda_{s})}{1+r^{r}_{1}(\lambda_{s})r^{r}_{2}(\lambda_{s})} & 1 \\
         \end{array}
         \right),\\
\end{split}
\end{align*}
\begin{align*}
\begin{split}
&G^{1}_{\lambda_{2}}=G^{0}_{\lambda_{2}}
\left(
         \begin{array}{cc}
         1 & -\frac{r^{r}_{2}(\lambda_{2})}{1+r^{r}_{1}(\lambda_{2})r^{r}_{2}(\lambda_{2})} \\
         0 & 1 \\
         \end{array}
         \right),\quad
G^{1*}_{\lambda_{s}}=G^{0*}_{\lambda_{s}}
\left(
         \begin{array}{cc}
         1 & 0 \\
         \frac{r^{r}_{1}(\lambda_{2})}{1+r^{r}_{1}(\lambda_{2})r^{r}_{2}(\lambda_{2})} & 1 \\
         \end{array}
         \right),\\
&G^{2}_{\lambda_{2}}=G^{0}_{\lambda_{2}}
\left(
         \begin{array}{cc}
         1 & 0 \\
         -r^{r}_{1}(\lambda_{2}) & 1 \\
         \end{array}
         \right),\quad
G^{2*}_{\lambda_{2}}=G^{0*}_{\lambda_{2}}
\left(
         \begin{array}{cc}
         1 & r^{r}_{2}(\lambda_{2}) \\
         0 & 1 \\
         \end{array}
         \right).
\end{split}
\end{align*}
The functions $m_{\lambda_{j}}(\lambda_{j},\tau)$, $j=1,2,3$ admit the following RH problem
\begin{align}\label{4-49}
\begin{split}
\left\{ \begin{aligned}
m_{\lambda_{j},+}(\lambda_{j},\tau)&=m_{\lambda_{j},-}(\lambda_{j},\tau)J_{j}(\lambda_{j}),
\quad \tau\in\mathbb{R},\\
m_{\lambda_{j}}(\lambda_{j},\tau)&=(I+O(\tau^{-1}))
e^{(-1)^{j}\frac{i}{4}\tau^{2}\sigma_{3}}\tau^{(-1)^{j+1}iv(\lambda_{j})\sigma_{3}},\quad\tau\rightarrow\infty,
\end{aligned} \right.
\end{split}
\end{align}
with
\begin{align}\label{4-50}
J_{j}(\lambda_{j})=\left(
         \begin{array}{cc}
         1+r^{r}_{1}(\lambda_{j})r^{r}_{2}(\lambda_{j}) & -r^{r}_{2}(\lambda_{j}) \\
         -r^{r}_{1}(\lambda_{j}) & 1 \\
         \end{array}
         \right).
\end{align}
In addition, the RH problem $\widehat{m}^{pc}_{\lambda_{j}}(\lambda_{j},\tau)$, $j=1,2,3$ admit the asymptotic behavior as $\tau\rightarrow\infty$
\begin{align}\label{4-51}
\widehat{m}^{pc}_{\lambda_{j}}(\lambda_{j},\tau)=I+\frac{i}{\tau}
\left(
         \begin{array}{cc}
         0 & \beta^{r}_{j}(\lambda_{j}) \\
         -\gamma^{r}_{j}(\lambda_{j}) & 0 \\
         \end{array}
         \right)+O(\tau^{-2}),\quad \tau\rightarrow\infty,
\end{align}
with
\begin{subequations}\label{4-52}
\begin{align}
&\beta^{r}_{s}(\lambda_{s})=-\frac{\sqrt{2\pi}e^{-\frac{\pi}{2}v(\lambda_{s})}e^{\frac{i\pi}{4}}}
{r^{r}_{1}(\lambda_{s})\Gamma(-iv(\lambda_{s}))},\quad s=1,3,\\
&\gamma_{s}(\lambda_{s})=-\frac{\sqrt{2\pi}e^{-\frac{\pi}{2}v(\lambda_{s})}e^{-\frac{i\pi}{4}}}
{r^{r}_{2}(\lambda_{s})\Gamma(iv(\lambda_{s}))},\quad s=1,3.
\end{align}
\end{subequations}
From the symmetry \eqref{4-31}, we have
\begin{align}\label{4-53}
\widehat{m}^{pc}_{\lambda_{2}}(\lambda_{2},\tau)
=\overline{\widehat{m}^{pc}_{\lambda_{1}}(\lambda_{1},-\bar{\tau})},
\end{align}
then
\begin{align}\label{4-54}
\beta^{r}_{2}(\lambda_{2})=\overline{\beta^{r}_{1}(\lambda_{2})},\quad
\gamma^{r}_{2}(\lambda_{2})=\overline{\gamma^{r}_{1}(\lambda_{2})}.
\end{align}

\subsection{The long-time asymptotic behavior}\quad

The purpose of this subsection is to establish the explicit long-time asymptotic expression of the nonlocal LPD equation. After acquiring the local parametrix $\widehat{M}^{r}_{\lambda_{j}}(x,t,\xi)$, $j=1,2,3$, we introduce $\breve{M}^{r}(x,t,\xi)$ as follows
\begin{align}\label{4-55}
\breve{M}^{r}(x,t,\xi)=\left\{ \begin{aligned}
         &\widehat{M}^{r}(x,t,\xi)(\widehat{M}^{r}_{\lambda_{1}})^{-1}(x,t,\xi),
         &\quad &|\xi-\lambda_{1}|<\varepsilon,\\
         &\widehat{M}^{r}(x,t,\xi)(\widehat{M}^{r}_{\lambda_{2}})^{-1}(x,t,\xi),
         &\quad &|\xi-\lambda_{2}|<\varepsilon,\\
         &\widehat{M}^{r}(x,t,\xi)(\widehat{M}^{r}_{\lambda_{3}})^{-1}(x,t,\xi),
         &\quad &|\xi-\lambda_{3}|<\varepsilon,\\
         &\widehat{M}^{r}(x,t,\xi),
         &\quad &elsewhere,
         \end{aligned} \right.
\end{align}
here $\varepsilon$ is small enough to make $|\lambda_{j}|>\varepsilon$ and $|i\xi_{1}-\lambda_{j}|>\varepsilon$, we define the jump contour $\breve{\Upsilon}=\Upsilon\cup\partial D_{\varepsilon}(\lambda_{1})\cup\partial D_{\varepsilon}(\lambda_{2})\cup\partial D_{\varepsilon}(\lambda_{3})$ of $\breve{M}(x,t,\xi)$ shown in Figure 6. There we also define $\Upsilon_{\varepsilon}=[\Upsilon\cap D_{\varepsilon}(\lambda_{1})]\cup[\Upsilon\cap D_{\varepsilon}(\lambda_{2})]\cup[\Upsilon\cap D_{\varepsilon}(\lambda_{3})]$ and the function $\breve{M}^{r}(x,t,\xi)$ admits the following RH problem
\begin{RHP}\label{rhp6}
Find a matrix function $\breve{M}^{r}(x,t,\xi)$ satisfy the following relations
\begin{enumerate}[(i)]
\item $\breve{M}^{r}(x,t,\xi)$ is analytic for $\xi\in\mathbb{C}\backslash\breve{\Upsilon}$.
\item Jump conditions:
\begin{equation}\label{4-56}
\breve{M}^{r}_{+}(x,t,\xi)=\breve{M}^{r}_{-}(x,t,\xi)\breve{J}(x,t,\xi),
\end{equation}
where
\begin{align}\label{4-57}
\breve{J}(x,t,\xi)=\left\{ \begin{aligned}
         &\widehat{M}^{r}_{\lambda_{1}}(x,t,\xi)
         \widehat{J}^{r}(x,t,\xi)(\widehat{M}^{r}_{\lambda_{1}})^{-1}(x,t,\xi),
         & \quad &\xi\in \Upsilon\cap D_{\varepsilon}(\lambda_{1}),\\
         &\widehat{M}^{r}_{\lambda_{2}}(x,t,\xi)
         \widehat{J}^{r}(x,t,\xi)(\widehat{M}^{r}_{\lambda_{2}})^{-1}(x,t,\xi),
         & \quad &\xi\in \Upsilon\cap D_{\varepsilon}(\lambda_{2}),\\
         &\widehat{M}^{r}_{\lambda_{3}}(x,t,\xi)
         \widehat{J}^{r}(x,t,\xi)(\widehat{M}^{r}_{\lambda_{3}})^{-1}(x,t,\xi),
         & \quad &\xi\in \Upsilon\cap D_{\varepsilon}(\lambda_{3}),\\
         &(\widehat{M}^{r}_{\lambda_{1}})^{-1}(x,t,\xi),& \quad &\xi\in \partial D_{\varepsilon}(\lambda_{1}),\\
         &(\widehat{M}^{r}_{\lambda_{2}})^{-1}(x,t,\xi),& \quad &\xi\in \partial D_{\varepsilon}(\lambda_{2}),\\
         &(\widehat{M}^{r}_{\lambda_{3}})^{-1}(x,t,\xi),& \quad &\xi\in \partial D_{\varepsilon}(\lambda_{3}),\\
         &\widehat{J}^{r}(x,t,\xi), &\quad &\xi\in \Upsilon\backslash \Upsilon_{\varepsilon}.\\
                          \end{aligned} \right.
\end{align}
\item Normalization condition at $\xi=\infty$:
\begin{align}\label{4-58}
\breve{M}^{r}(x,t,\xi)=I+O(\frac{1}{\xi}),\quad \xi\rightarrow\infty.
\end{align}
\end{enumerate}
\end{RHP}

\begin{figure}[H]
      \centering
\begin{tikzpicture}[scale=1.5]
\draw[-][thick](-1.0,0)node[below]{$\lambda_{3}$};
\draw[->][thick](0,1)--(0.5,0.5);
\draw[-][thick](0.5,0.5)--(1,0);
\draw[->][thick](0,-1)--(0.5,-0.5);
\draw[-][thick](0.5,-0.5)--(1,0);
\draw[-][thick](1.5,-0.5)--(1,0);
\draw[-][thick](1.5,0.5)--(1,0);
\draw[-][thick](2,0)--(1.5,0.5);
\draw[-][thick](0,1)--(-0.5,0.5);
\draw[->][thick](-1,0)--(-0.5,0.5);
\draw[-][thick](0,-1)--(-0.5,-0.5);
\draw[->][thick](-1,0)--(-0.5,-0.5);
\draw[->][thick](-2,1)--(-1.5,0.5);
\draw[-][thick](-1,0)--(-1.5,0.5);
\draw[-][thick](-1.5,0.5)--(-2,1);
\draw[->][thick](-2,-1)--(-1.5,-0.5);
\draw[->][thick](1,0)--(1.25,0.25);
\draw[->][thick](1,0)--(1.25,-0.25);
\draw[->][thick](1.5,0.5)--(1.75,0.25);
\draw[->][thick](1.5,-0.5)--(1.75,-0.25);
\draw[-][thick](-1,0)--(-1.5,-0.5);
\draw[-][thick](-1.5,-0.5)--(-2,-1);
\draw [pink, dashed](-3,0)--(4,0);
\draw[-][thick](1,0)node[below]{$\lambda_{2}$};
\draw[fill] (1,0) circle [radius=0.035];
\draw[fill] (0.5,0) circle [radius=0.035]node[below]{$0$};
\draw[fill] (2,0) circle [radius=0.035]node[below]{$\lambda_{1}$};
\draw[fill] (-1,0) circle [radius=0.035];
\draw[-][thick](1.5,-0.5)--(2,0);
\draw[->][thick](2,0)--(2.5,0.5);
\draw[-][thick](2.5,0.5)--(3,1);
\draw[->][thick](2,0)--(2.5,-0.5);
\draw[-][thick](2.5,-0.5)--(3,-1);
\draw(1,0) [blue, line width=1] circle(0.3);
\draw(2,0) [blue, line width=1] circle(0.3);
\draw(-1,0) [blue, line width=1] circle(0.3);
\draw[fill] (-1.4,0.7) node{$\Upsilon_{2}$};
\draw[fill] (-1.4,-0.7) node{$\Upsilon^{*}_{2}$};
\draw[fill] (-0.6,0.7) node{$\Upsilon_{1}$};
\draw[fill] (-0.6,-0.7) node{$\Upsilon^{*}_{1}$};
\draw[fill] (0.6,0.7) node{$\Upsilon_{1}$};
\draw[fill] (0.6,-0.7) node{$\Upsilon^{*}_{1}$};
\draw[fill] (1.2,0.5) node{$\Upsilon_{2}$};
\draw[fill] (1.2,-0.5) node{$\Upsilon^{*}_{2}$};
\draw[fill] (1.9,0.5) node{$\Upsilon_{2}$};
\draw[fill] (1.9,-0.5) node{$\Upsilon^{*}_{2}$};
\draw[fill] (2.4,0.7) node{$\Upsilon_{1}$};
\draw[fill] (2.4,-0.7) node{$\Upsilon^{*}_{1}$};
\end{tikzpicture}
          \caption{ \footnotesize The jump contours $\breve{\Upsilon}$ of $\breve{M}^{r}(x,t,\xi)$.}
\end{figure}

The next step is to obtain the large-$t$ valuation of $\breve{M}^{r}(x,t,\xi)$. Now we define $w(x,t,\xi)=\breve{J}(x,t,\xi)-I$. There from the symmetry relations $\widehat{J}^{r}(x,t,\xi)=\overline{\widehat{J}^{r}(x,t,-\bar{\xi})}$ and $\widehat{M}^{r}(x,t,\xi)=\overline{\widehat{M}^{r}(x,t,-\bar{\xi})}$, we have
\begin{align}\label{4-59}
w(x,t,\xi)=\overline{w(x,t,-\bar{\xi})}.
\end{align}
$w(x,t,\xi)$ admits the following estimates (see \cite{Wang-Liu-2022}), there $1\leq n\leq \infty$,
\begin{align}\label{4-60}
\begin{split}
&\parallel w(x,t,\xi) \parallel_{(L^{1}\cap L^{2}\cap L^{\infty})(\Upsilon\backslash \Upsilon_{\varepsilon})}=O(e^{-ct}),\\
&\parallel w(x,t,\xi) \parallel_{L^{n}(\Upsilon_{\varepsilon})}
=O(t^{-\frac{1}{2}-\frac{1}{2n}+\max\left\{|\im v(\lambda_{1})|,|\im v(\lambda_{2})|,|\im v(\lambda_{3})|\right\}}\ln t).
\end{split}
\end{align}
From the definition in \eqref{l1}, \eqref{l2} and \eqref{l3}, we have the estimates at $t\rightarrow\infty$
\begin{align}\label{4-61}
\begin{split}
&\Lambda_{1}=O(t^{\frac{1}{2}\im v(\lambda_{1})},t^{-\frac{1}{2}\im v(\lambda_{1})}),\\
&\Lambda_{2}=O(t^{-\frac{1}{2}\im v(\lambda_{2})},t^{\frac{1}{2}\im v(\lambda_{2})}),\\
&\Lambda_{3}=O(t^{\frac{1}{2}\im v(\lambda_{3})},t^{-\frac{1}{2}\im v(\lambda_{3})}).
\end{split}
\end{align}
For $\xi\in\partial D_{\varepsilon}(\lambda_{1})$,
\begin{align}\label{4-62}
\begin{split}
w&=(\widehat{M}^{r}_{\lambda_{1}})^{-1}(x,t,\xi)-I\\
&=\Lambda_{1}(\mu,t)\left((\widehat{m}^{\Upsilon}_{\lambda_{1}})^{-1}(\lambda_{1},\tau(\xi))-I\right)
\Lambda^{-1}_{1}(\mu,t)\\
&=\Lambda_{1}(\mu,t)\left(-\frac{i}{\tau}\left(
         \begin{array}{cc}
         0 & \beta^{r}_{1}(\lambda_{1}) \\
         -\gamma^{r}_{1}(\lambda_{1}) & 0 \\
         \end{array}
         \right)+O(\tau^{-2})\right)
\Lambda^{-1}_{1}(\mu,t)\\
&=\frac{\Xi_{1}(\mu,t)}{\sqrt{t}(\xi-\lambda_{1})}+\hat{R}^{1}_{1}(\lambda_{1},t),
\end{split}
\end{align}
with
\begin{align}\label{4-63}
\Xi_{1}=-\frac{i}{2\sqrt{48\gamma\lambda^{2}_{1}-1}}
\left(
         \begin{array}{cc}
         0 & \beta^{r}_{1}(\lambda_{1})e^{2[\chi_{1}+\phi_{1}]}
         F_{1}^{iv(\lambda_{1})} \\
         -\gamma^{r}_{1}(\lambda_{1})e^{-2[\chi_{1}+\phi_{1}]}
         F_{1}^{-iv(\lambda_{1})}  & 0 \\
         \end{array}
         \right),
\end{align}
and
\begin{align}\label{R1-1}
\hat{R}^{1}_{1}(\lambda_{1},t)=\left(O(t^{-1-\im v(\lambda_{1})}),O(t^{-1+\im v(\lambda_{1})})\right),
\end{align}
\begin{align*} F_{1}=\frac{1-48\gamma\lambda^{2}_{2}}{4t(48\gamma\lambda^{2}_{1}-1)(48\gamma\lambda^{2}_{3}-1)}.
\end{align*}
Using the same method, it can be obtained that for $\xi\in\partial D_{\varepsilon}(\lambda_{2})$,
\begin{align}\label{4-67}
w=\frac{\Xi_{2}(\mu,t)}{\sqrt{t}(\xi-\lambda_{2})}+\hat{R}^{2}_{1}(\lambda_{2},t),
\end{align}
with
\begin{align}\label{4-68}
\Xi_{2}=-\frac{i}{2\sqrt{1-48\gamma\lambda^{2}_{2}}}
\left(
         \begin{array}{cc}
         0 & \beta^{r}_{2}(\lambda_{2})e^{2[\chi_{2}+\phi_{2}]}
         \frac{F_{2}^{-iv(\lambda_{2})}}{\tilde{F}_{2}} \\
         -\gamma^{r}_{2}(\lambda_{2})e^{-2[\chi_{2}+\phi_{2}]}
         \frac{F_{2}^{iv(\lambda_{2})}}{\tilde{F}_{2}^{-1}}  & 0 \\
         \end{array}
         \right),
\end{align}
and
\begin{align}\label{R1-2}
\hat{R}^{2}_{1}(\lambda_{2},t)=\left(O(t^{-1+\im v(\lambda_{2})}),O(t^{-1-\im v(\lambda_{2})})\right),
\end{align}
\begin{align*}
F_{2}=\frac{1}{4t(1-48\gamma\lambda^{2}_{2})},\quad
\tilde{F}_{2}=\left(4t(48\gamma\lambda^{2}_{1}-1)\right)^{iv(\lambda_{2})}
\left(\frac{1}{4t(48\gamma\lambda^{2}_{3}-1)}\right)^{-iv(\lambda_{3})}.
\end{align*}
For $\xi\in\partial D_{\varepsilon}(\lambda_{3})$,
\begin{align}\label{4-69}
w=\frac{\Xi_{3}(\mu,t)}{\sqrt{t}(\xi-\lambda_{3})}+\hat{R}^{3}_{1}(\lambda_{3},t),
\end{align}
with
\begin{align}\label{4-70}
\Xi_{3}=-\frac{i}{2\sqrt{48\gamma\lambda^{2}_{3}-1}}
\left(
         \begin{array}{cc}
         0 & \beta^{r}_{3}(\lambda_{3})e^{2[\chi_{3}+\phi_{3}]}
         F_{3}^{iv(\lambda_{3})} \\
         -\gamma^{r}_{3}(\lambda_{3})e^{-2[\chi_{3}+\phi_{3}]}
         F_{3}^{-iv(\lambda_{3})}  & 0 \\
         \end{array}
         \right),
\end{align}
and
\begin{align}\label{R1-3}
\hat{R}^{3}_{1}(\lambda_{3},t)=\left(O(t^{-1-\im v(\lambda_{3})}),O(t^{-1+\im v(\lambda_{3})})\right),
\end{align}
\begin{align*}
F_{3}=\frac{1-48\gamma\lambda^{2}_{2}}{4t(48\gamma\lambda^{2}_{3}-1)(48\gamma\lambda^{2}_{1}-1)}.
\end{align*}
There we also have the estimates at $t\rightarrow\infty$ for $w(x,t,\xi)$ as follows when $\xi\in\breve{\Upsilon}$, the $w^{(l)}$ denotes the column $l$ of $w$,
\begin{align}\label{4-71}
\begin{split}
&\parallel w(x,t,\xi) \parallel_{(L^{1}\cap L^{2})(\breve{\Upsilon})}=O(t^{-\frac{1}{2}+\max\left\{|\im v(\lambda_{1})|,|\im v(\lambda_{2})|,|\im v(\lambda_{3})|\right\}}),\\
&\parallel w(x,t,\xi) \parallel_{L^{\infty}(\breve{\Upsilon})}=O(t^{-\frac{1}{2}+\max\left\{|\im v(\lambda_{1})|,|\im v(\lambda_{2})|,|\im v(\lambda_{3})|\right\}}\ln t),\\
&\parallel w^{(l)}(x,t,\xi) \parallel_{(L^{1}\cap L^{2})(\breve{\Upsilon})}=O(t^{-\frac{1}{2}+(-1)^{l+j}\max\left\{\im v(\lambda_{1}),\im v(\lambda_{2}),\im v(\lambda_{3})\right\}}),\\
&\parallel w^{(l)}(x,t,\xi) \parallel_{L^{\infty}(\breve{\Upsilon})}=O(t^{-\frac{1}{2}+(-1)^{l+j}\max\left\{\im v(\lambda_{1}),\im v(\lambda_{2}),\im v(\lambda_{3})\right\}}\ln t).
\end{split}
\end{align}

Here we define the Cauchy operator $(Cf)(\xi^{\prime})=\frac{1}{2\pi i}\int_{\breve{\Upsilon}}\frac{f(s)}{s-\xi^{\prime}}ds$, $\xi^{\prime}\in\mathbb{C}\setminus\breve{\Upsilon}$, and the integral operator $C_{w}:L^{2}(\breve{\Upsilon})+L^{\infty}(\breve{\Upsilon})\rightarrow L^{2}(\breve{\Upsilon})$ by $C_{w}(f)=C_{-}(fw)$, then, we have
\begin{align}\label{4-72}
\parallel C_{w} \parallel\leq T\parallel w \parallel_{L^{\infty}(\breve{\Upsilon})}
=O(t^{-\frac{1}{2}+\max\left\{|\im v(\lambda_{1})|,|\im v(\lambda_{2})|,|\im v(\lambda_{3})|\right\}}\ln t),\quad t\rightarrow\infty,
\end{align}
with $T$ is a constant, and there $C_{-}$ is a operator $L^{2}(\breve{\Upsilon})\rightarrow L^{2}(\breve{\Upsilon})$. It can be seen $\parallel C_{w} \parallel$ degenerates to zero as $t\rightarrow \infty$, which means $I-C_{w}$ is reversible for the large-time. Therefore, we introduce $\breve{v}(x,t,\xi)-I\in L^{2}(\breve{\Upsilon})$, there $\breve{v}(x,t,\xi)$ is the solution of the Fredholm integral function
\begin{align}\label{4-73}
\breve{v}(x,t,\xi)-I=(I-C_{w})^{-1}C_{w}I,
\end{align}
moreover,
\begin{align}\label{4-74}
\parallel\breve{v}(x,t,\xi)-I\parallel_{L^{2}(\breve{\Upsilon})}\leq T\parallel w(x,t,\xi) \parallel_{(L^{1}\cap L^{2})(\breve{\Upsilon})}.
\end{align}
then we have
\begin{align}\label{4-75}
\parallel\breve{v}(x,t,\xi)-I\parallel_{L^{2}(\breve{\Upsilon})}=O(t^{-\frac{1}{2}+\max\left\{|\im v(\lambda_{1})|,|\im v(\lambda_{2})|,|\im v(\lambda_{3})|\right\}}),\quad t\rightarrow\infty.
\end{align}

From the Beals-Coifman theory, the function $\breve{M}^{r}(x,t,\xi)$ can be denoted by the solution of a singular integral equation, it rely on $w$ and normalization condition \eqref{4-58} and has the form
\begin{align}\label{4-76}
\breve{M}^{r}(x,t,\xi)=I+C(\breve{v}w)=I+\frac{1}{2\pi i}\int_{\breve{\Upsilon}}
\breve{v}(x,t,s)w(x,t,s)\frac{ds}{s-\xi},
\end{align}
then, it can be used to derive the following relation
\begin{align}\label{4-77}
\lim_{\xi\rightarrow\infty}\xi(\breve{M}^{r}(x,t,\xi)-I)=-\frac{1}{2\pi i}\int_{\breve{\Upsilon}}\breve{v}(x,t,s)w(x,t,s)ds.
\end{align}
After taking \eqref{4-62}-\eqref{4-70} and \eqref{4-75} into account, for $j=1,2,3$, one has
\begin{align}\label{4-78}
\begin{split}
&\oint_{|s-\lambda_{j}|=\varepsilon}\breve{v}(x,t,s)w(x,t,s)ds\\
&=\oint_{|s-\lambda_{j}|=\varepsilon}w(x,t,s)ds+\oint_{|s-\lambda_{j}|=\varepsilon}(\breve{v}(x,t,s)-I)w(x,t,s)ds\\
&=\frac{\Xi_{j}(\mu,t)}{\sqrt{t}}\oint_{|s-\lambda_{j}|=\varepsilon}\frac{1}{s-\lambda_{j}}ds
+\hat{R}^{j}_{1}(\lambda_{j},t)+\hat{R}^{j}_{2}(\lambda_{j},t)\\
&=-2\pi i\Xi_{j}^{r}(\mu,t)+\hat{R}^{j}_{1}(\lambda_{j},t)+\hat{R}^{j}_{2}(\lambda_{j},t),
\end{split}
\end{align}
there we assume $\Xi_{j}^{r}(\mu,t)=-\frac{\Xi_{j}(\mu,t)}{\sqrt{t}}$, besides, $\Xi_{j}(\mu,t)$ $(j=1,2,3)$ are given by \eqref{4-63}, \eqref{4-68} and \eqref{4-70}, $\hat{R}^{j}_{1}(\lambda_{j},t)$ $(j=1,2,3)$ are given by \eqref{R1-1}, \eqref{R1-2} and \eqref{R1-3}. Moreover,
\begin{align}\label{4-79}
\begin{split}
\hat{R}^{1}_{2}(\lambda_{1},t)&=\parallel \breve{v}(x,t,s)-I \parallel_{L^{2}(\partial D_{\varepsilon}(\lambda_{1}))}O(\Xi_{1}^{r}(\mu,t))\\
&=(O(t^{-1+|\im v(\lambda_{1})|-\im v(\lambda_{1})}),O(t^{-1+|\im v(\lambda_{1})|+\im v(\lambda_{1})})),
\end{split}
\end{align}
\begin{align}\label{4-80}
\begin{split}
\hat{R}^{2}_{2}(\lambda_{2},t)&=\parallel \breve{v}(x,t,s)-I \parallel_{L^{2}(\partial D_{\varepsilon}(\lambda_{2}))}O(\Xi_{2}^{r}(\mu,t))\\
&=(O(t^{-1+|\im v(\lambda_{2})|+\im v(\lambda_{2})}),O(t^{-1+|\im v(\lambda_{2})|-\im v(\lambda_{2})})),
\end{split}
\end{align}
\begin{align}\label{4-81}
\begin{split}
\hat{R}^{3}_{2}(\lambda_{3},t)&=\parallel \breve{v}(x,t,s)-I \parallel_{L^{2}(\partial D_{\varepsilon}(\lambda_{3}))}O(\Xi_{3}^{r}(\mu,t))\\
&=(O(t^{-1+|\im v(\lambda_{3})|-\im v(\lambda_{3})}),O(t^{-1+|\im v(\lambda_{3})|+\im v(\lambda_{3})})).
\end{split}
\end{align}
After supposing $R(\mu,t)=\sum_{j=1}^{3}\hat{R}_{j}(\mu,t)$, there $\hat{R}_{1}(\mu,t)=\sum_{q=1}^{3}\hat{R}^{q}_{1}(\lambda_{q},t)$, $\hat{R}_{2}(\mu,t)=\sum_{p=1}^{3}\hat{R}^{p}_{2}(\lambda_{p},t)$ and $\hat{R}_{3}(\mu,t)=\hat{R}_{1}(\mu,t)+\hat{R}_{2}(\mu,t)$ possesses the form
\begin{align}\label{4-82}
\hat{R}_{3}(\mu,t)=(O(t^{-1+m^{(1)}-m^{(2)}}),O(t^{-1+m^{(1)}+m^{(2)}})),
\end{align}
with
\begin{align*}
&m^{(1)}=\max\left\{|\im v(\lambda_{1})|,|\im v(\lambda_{2})|,|\im v(\lambda_{3})|\right\},\\
&m^{(2)}=\max\left\{\im v(\lambda_{1}),\im v(\lambda_{2}),\im v(\lambda_{3})\right\}.
\end{align*}
Let $R(\mu,t)=\left(
\begin{array}{cc}
R_{1}(\mu,t) & R_{2}(\mu,t) \\
R_{1}(\mu,t) & R_{2}(\mu,t) \\
\end{array}
\right)$, the estimates of $R_{1}(\mu,t)$ and $R_{2}(\mu,t)$ are shown as follows
\begin{align}\label{1-12}
R_{1}=\left\{ \begin{aligned}
         &O(t^{-1}),
         \quad\quad\quad\quad\quad\quad\quad\quad\quad\quad\quad\quad\quad
          (-1)^{j}\im v(\lambda_{j})>0,\\
         &O(t^{-1}\ln t),
         \quad\quad\quad\quad\quad\quad\quad\quad
          \im v(\lambda_{j})=0,(-1)^{l}\im v(\lambda_{l})\leq0,l\neq j,\\
         &O(t^{-1+2|\im v(\lambda_{1})|}),
         \quad\quad\quad\quad\quad\quad
         \im v(\lambda_{1})>0,\im v(\lambda_{2})\geq0,\im v(\lambda_{3})\leq0,\\
         &O(t^{-1+2|\im v(\lambda_{2})|}),
         \quad\quad\quad\quad\quad\quad
          \im v(\lambda_{1})\leq0,\im v(\lambda_{2})<0,\im v(\lambda_{3})\leq0,\\
         &O(t^{-1+2|\im v(\lambda_{3})|}),
         \quad\quad\quad\quad\quad\quad
          \im v(\lambda_{1})\leq0,\im v(\lambda_{2})\geq0,\im v(\lambda_{3})>0,\\
         &O(t^{-1+2\max\left\{|\im v(\lambda_{1})|,|\im v(\lambda_{2})|\right\}}),
         \quad
         \im v(\lambda_{1})>0,\im v(\lambda_{2})<0,\im v(\lambda_{3})\leq0,\\
         &O(t^{-1+2\max\left\{|\im v(\lambda_{2})|,|\im v(\lambda_{3})|\right\}}),
         \quad
         \im v(\lambda_{1})\leq0,\im v(\lambda_{2})<0,\im v(\lambda_{3})>0,\\
         &O(t^{-1+2\max\left\{|\im v(\lambda_{1})|,|\im v(\lambda_{3})|\right\}}),
         \quad
         \im v(\lambda_{1})>0,\im v(\lambda_{2})\geq0,\im v(\lambda_{3})>0,\\
         &O(t^{-1+2\max\left\{|\im v(\lambda_{1})|,|\im v(\lambda_{2})|,|\im v(\lambda_{3})|\right\}}),
         \quad\quad (-1)^{j}\im v(\lambda_{j})<0,
                          \end{aligned} \right.
\end{align}
\begin{align}\label{1-13}
R_{2}=\left\{ \begin{aligned}
         &O(t^{-1+2\max\left\{|\im v(\lambda_{1})|,|\im v(\lambda_{2})|,|\im v(\lambda_{3})|\right\}}),
         \quad (-1)^{j}\im v(\lambda_{j})>0,\\
         &O(t^{-1+2\max\left\{|\im v(\lambda_{1})|,|\im v(\lambda_{2})|\right\}}),
         \quad\quad
         \im v(\lambda_{1})<0,\im v(\lambda_{2})>0,\im v(\lambda_{3})\geq0,\\
         &O(t^{-1+2\max\left\{|\im v(\lambda_{2})|,|\im v(\lambda_{3})|\right\}}),
         \quad\quad
         \im v(\lambda_{1})\geq0,\im v(\lambda_{2})>0,\im v(\lambda_{3})<0,\\
         &O(t^{-1+2\max\left\{|\im v(\lambda_{1})|,|\im v(\lambda_{3})|\right\}}),
         \quad\quad
         \im v(\lambda_{1})<0,\im v(\lambda_{2})\leq0,\im v(\lambda_{3})<0,\\
         &O(t^{-1+2|\im v(\lambda_{1})|}),
         \quad\quad\quad\quad\quad\quad\quad
         \im v(\lambda_{1})<0,\im v(\lambda_{2})\leq0,\im v(\lambda_{3})\geq0,\\
         &O(t^{-1+2|\im v(\lambda_{2})|}),
         \quad\quad\quad\quad\quad\quad\quad
          \im v(\lambda_{1})\geq0,\im v(\lambda_{2})>0,\im v(\lambda_{3})\geq0,\\
         &O(t^{-1+2|\im v(\lambda_{3})|}),
         \quad\quad\quad\quad\quad\quad\quad
          \im v(\lambda_{1})\geq0,\im v(\lambda_{2})\leq0,\im v(\lambda_{3})<0,\\
         &O(t^{-1}\ln t),
         \quad\quad\quad\quad\quad\quad\quad\quad\quad
          \im v(\lambda_{j})=0,(-1)^{l}\im v(\lambda_{l})\geq0,l\neq j,\\
          &O(t^{-1}),
         \quad\quad\quad\quad\quad\quad\quad\quad\quad\quad\quad\quad\quad
          (-1)^{j}\im v(\lambda_{j})<0.
                          \end{aligned} \right.
\end{align}

Then, combine \eqref{4-78}, equation \eqref{4-76} can be written as
\begin{align}\label{4-88}
\begin{split}
\breve{M}^{r}=&I-\frac{1}{2\pi i}\oint_{|s-\lambda_{1}|=\varepsilon}\frac{\Xi_{1}^{r}(\mu,t)}{(s-\lambda_{1})(s-\xi)}ds-
\frac{1}{2\pi i}\oint_{|s-\lambda_{2}|=\varepsilon}\frac{\Xi_{2}^{r}(\mu,t)}{(s-\lambda_{2})(s-\xi)}ds\\
&-\frac{1}{2\pi i}\oint_{|s-\lambda_{3}|=\varepsilon}\frac{\Xi_{3}^{r}(\mu,t)}{(s-\lambda_{3})(s-\xi)}ds
+R(\mu,t),\quad |\xi-\lambda_{j}|>\varepsilon,j=1,2,3.
\end{split}
\end{align}
By \eqref{4-55}, it can be further obtained that $\widehat{M}^{r}(x,t,\xi)=\breve{M}^{r}(x,t,\xi)$, then
\begin{align}\label{4-89}
\begin{split}
\lim_{\xi\rightarrow\infty}\xi(\widehat{M}^{r}(x,t,\xi)-I)
=\Xi_{1}^{r}(\mu,t)+\Xi_{2}^{r}(\mu,t)+\Xi_{3}^{r}(\mu,t)+R(\mu,t),
\end{split}
\end{align}
and
\begin{subequations}\label{4-90}
\begin{align}
\widehat{M}^{r}(x,t,0)&=I-\frac{\Xi_{1}^{r}(\mu,t)}{\lambda_{1}}-
\frac{\Xi_{2}^{r}(\mu,t)}{\lambda_{2}}-
\frac{\Xi_{3}^{r}(\mu,t)}{\lambda_{3}}+R(\mu,t),\\
\widehat{M}^{r}(x,t,i\xi_{1})&=I-\frac{\Xi_{1}^{r}(\mu,t)}{\lambda_{1}-i\xi_{1}}-
\frac{\Xi_{2}^{r}(\mu,t)}{\lambda_{2}-i\xi_{1}}-
\frac{\Xi_{3}^{r}(\mu,t)}{\lambda_{3}-i\xi_{1}}+R(\mu,t).
\end{align}
\end{subequations}
Next, we estimate the elements $P_{12}(x,t)$ and $P_{21}(x,t)$ of matrix-value factor $P(x,t)$ defined in \eqref{4-28}. According to the definitions of $u(x,t)$ and $v(x,t)$ in \eqref{4-29}, we have
\begin{align}\label{4-91}
\left\{ \begin{aligned}
         &u_{1}(x,t)=i\xi_{1}+R_{1}(\mu,t),\\
         &u_{2}(x,t)=-i\xi_{1}\left(\frac{(\Xi_{1}^{r})_{21}(\mu,t)}{\lambda_{1}-i\xi_{1}}
         +\frac{(\Xi_{2}^{r})_{21}(\mu,t)}{\lambda_{2}-i\xi_{1}}+
         \frac{(\Xi_{3}^{r})_{21}(\mu,t)}{\lambda_{3}-i\xi_{1}}\right)+R_{1}(\mu,t),
         \end{aligned} \right.
\end{align}
\begin{align}\label{4-92}
\left\{ \begin{aligned}
         &v_{1}(x,t)=c_{0}(\mu)-i\xi_{1}\left(\frac{(\Xi_{1}^{r})_{12}(\mu,t)}{\lambda_{1}}
         +\frac{(\Xi_{2}^{r})_{12}(\mu,t)}{\lambda_{2}}+
         \frac{(\Xi_{3}^{r})_{12}(\mu,t)}{\lambda_{3}}\right)+R_{3}(\mu,t),\\
         &v_{2}(x,t)=i\xi_{1}-c_{0}(\mu)\left(\frac{(\Xi_{1}^{r})_{21}(\mu,t)}{\lambda_{1}}
         +\frac{(\Xi_{2}^{r})_{21}(\mu,t)}{\lambda_{2}}+
         \frac{(\Xi_{3}^{r})_{21}(\mu,t)}{\lambda_{3}}\right)+R_{3}(\mu,t),
         \end{aligned} \right.
\end{align}
with $R_{3}(\mu,t)=R_{1}(\mu,t)+R_{2}(\mu,t)$. Furthermore, we gain the estimate
\begin{align}\label{4-93}
\begin{split}
\left\{ \begin{aligned}
u_{1}v_{1}&=i\xi_{1}c_{0}(\mu)+\xi^{2}_{1}\left(\frac{(\Xi_{1}^{r})_{12}(\mu,t)}{\lambda_{1}}
         +\frac{(\Xi_{2}^{r})_{12}(\mu,t)}{\lambda_{2}}+
         \frac{(\Xi_{3}^{r})_{12}(\mu,t)}{\lambda_{3}}\right)+R_{3}(\mu,t),\\
u_{1}v_{2}&=-\xi^{2}_{1}-i\xi_{1}c_{0}(\mu)\left(\frac{(\Xi_{1}^{r})_{21}(\mu,t)}{\lambda_{1}}
         +\frac{(\Xi_{2}^{r})_{21}(\mu,t)}{\lambda_{2}}+
         \frac{(\Xi_{3}^{r})_{21}(\mu,t)}{\lambda_{3}}\right)+R_{3}(\mu,t),\\
u_{2}v_{1}&=-i\xi_{1}c_{0}(\mu)\left(\frac{(\Xi_{1}^{r})_{21}(\mu,t)}{\lambda_{1}-i\xi_{1}}
         +\frac{(\Xi_{2}^{r})_{21}(\mu,t)}{\lambda_{2}-i\xi_{1}}+
         \frac{(\Xi_{3}^{r})_{21}(\mu,t)}{\lambda_{3}-i\xi_{1}}\right)+R_{1}(\mu,t),\\
u_{2}v_{2}&=\xi^{2}_{1}\left(\frac{(\Xi_{1}^{r})_{21}(\mu,t)}{\lambda_{1}-i\xi_{1}}
         +\frac{(\Xi_{2}^{r})_{21}(\mu,t)}{\lambda_{2}-i\xi_{1}}+
         \frac{(\Xi_{3}^{r})_{21}(\mu,t)}{\lambda_{3}-i\xi_{1}}\right)+R_{1}(\mu,t).
         \end{aligned} \right.
\end{split}
\end{align}
After bringing the above equation into formula \eqref{4-28} and performing the direct calculations, we have
\begin{align}\label{4-94}
\begin{split}
P_{12}(x,t)=&-\frac{ic_{0}}{\xi_{1}}-\left(\frac{(\Xi_{1}^{r})_{12}(\mu,t)}{\lambda_{1}}
         +\frac{(\Xi_{2}^{r})_{12}(\mu,t)}{\lambda_{2}}+
         \frac{(\Xi_{3}^{r})_{12}(\mu,t)}{\lambda_{3}}\right)\\
         &+\frac{ic^{2}_{0}}{\xi_{1}}\left(\frac{(\Xi_{1}^{r})_{21}(\mu,t)}{\lambda_{1}(\lambda_{1}-i\xi_{1})}
         +\frac{(\Xi_{2}^{r})_{21}(\mu,t)}{\lambda_{2}(\lambda_{2}-i\xi_{1})}+
         \frac{(\Xi_{3}^{r})_{21}(\mu,t)}{\lambda_{3}(\lambda_{3}-i\xi_{1})}\right)+R_{3}(\mu,t),
\end{split}
\end{align}
\begin{align}\label{4-95}
P_{21}(x,t)=\left(\frac{(\Xi_{1}^{r})_{21}(\mu,t)}{\lambda_{1}-i\xi_{1}}
         +\frac{(\Xi_{2}^{r})_{21}(\mu,t)}{\lambda_{2}-i\xi_{1}}+
         \frac{(\Xi_{3}^{r})_{21}(\mu,t)}{\lambda_{3}-i\xi_{1}}\right)+R_{1}(\mu,t).
\end{align}
It can be seen the expressions of $P_{12}(x,t)$ and $P_{21}(x,t)$ explicitly contain parameter $\xi_{1}$. Then we define
\begin{align}\label{4-96}
(\Xi_{j}^{r})_{12}=\frac{\lambda_{j}}{\lambda_{j}-i\xi_{1}}(\tilde{\Xi}_{j})_{12},\quad
(\Xi_{j}^{r})_{21}=\frac{\lambda_{j}-i\xi_{1}}{\lambda_{j}}(\tilde{\Xi}_{j})_{21},
\end{align}
there we use $r_{l}(\lambda_{j})$ to replace $r^{r}_{l}(\lambda_{j})$, $l=1,2$, $j=1,2,3$ in $\Xi_{j}^{r}$ to obtain $\tilde{\Xi}_{j}$. Therefore, we get that the terms which have the explicit expressions about $\xi_{1}$ now have decayed in the main asymptotic terms. Next, we substitute \eqref{4-94}-\eqref{4-96} and \eqref{4-89} into \eqref{4-32}, Then the long-term asymptotic behaviors of the solutions of the LPD equation at cases $x>0$ and $x<0$ are established. The main results are shown in the following Theorem.

\begin{thm}\label{thm1}
Taking into account the Cauchy problem \eqref{e1} and \eqref{e4}, where the initial data $q_{0}(x)$ is a compact perturbation of the pure step initial data \eqref{q31}: $q_{0}(x)-q_{0A}(x)=0$ for $|x|>\varepsilon$ with some $\varepsilon>0$. Here we assume that the scattering coefficients $a_{1}(\xi)$, $a_{2}(\xi)$ and $b(\xi)$ which are associated to the initial data $q_{0}(x)$ satisfy the following conditions
\begin{enumerate}[(I)]
\item $a_{1}(\xi)$ has a single, simple zero point in $\overline{\mathbb{C}^{+}}$ at $\xi=i\xi_{1}$ and $a_{2}(\xi)$ either has no zero points or has a single, simple zero point in $\overline{\mathbb{C}^{-}}$ at $\xi=0$;
\item $\im v(\lambda_{j})\in(-\frac{1}{2},\frac{1}{2})$, $j=1,2,3$ for
$\im v(\lambda_{j})=\frac{1}{2\pi}\int_{-\infty}^{\lambda_{j}}d arg(1+r_{1}(s)r_{2}(s))$ with $r_{1}(\xi)=\frac{\overline{b(-\bar{\xi})}}{a_{1}(\xi)}$ and $r_{2}(\xi)=\frac{b(\xi)}{a_{2}(\xi)}$.
\end{enumerate}
Under the assumption that the solution $q(x,t)$ satisfying the Cauchy problem \eqref{e1} and \eqref{e4} exists, the long-time asymptotics of $q(x,t)$ along any line $\mu=\frac{x}{t}=const\in\left(-\sqrt{\frac{1}{27\gamma}}+\varepsilon,
\sqrt{\frac{1}{27\gamma}}-\varepsilon\right)$ can be obtained as follows
\begin{enumerate}[(i)]
\item for $x<0$, the long-time asymptotics of $q(x,t)$ reads
\begin{align}\label{1-5}
\begin{split}
q(x,t)=&-\sum_{s=1}^{3}t^{-\frac{1}{2}+(-1)^{s}\im v(-\lambda_{s})}\exp\left\{-2[\overline{\chi_{s}}+\overline{\phi_{s}}(-\mu,\tau)]+(-1)^{s}i\re v(-\lambda_{s})\ln t\right\}H_{s}\\
&+R_{1}(-\mu,t),
\end{split}
\end{align}
with
\begin{subequations}\label{1-6}
\begin{align}
H_{1}&=
\frac{\sqrt{2\pi}e^{-\frac{\pi}{2}\overline{v(-\lambda_{1})}}e^{\frac{i\pi}{4}}}
{\sqrt{48\gamma\lambda^{2}_{1}-1}\overline{r_{2}}(-\lambda_{1})\Gamma(-i\overline{v(-\lambda_{1})})}
\left(\frac{1-48\gamma\lambda^{2}_{2}}{4(48\gamma\lambda^{2}_{1}-1)(48\gamma\lambda^{2}_{3}-1)}\right)^{i\overline{v(-\lambda_{1})}},\\
H_{2}&=
\frac{\sqrt{2\pi}e^{-\frac{\pi}{2}v(-\lambda_{2})}e^{-\frac{i\pi}{4}}}
{\sqrt{1-48\gamma\lambda^{2}_{2}}r_{2}(-\lambda_{2})\Gamma(iv(-\lambda_{2}))}
\left(\frac{1}{4(48\gamma\lambda^{2}_{3}-1)}\right)^{-i\overline{v(-\lambda_{3})}}
\left(\frac{1-48\gamma\lambda^{2}_{2}}{48\gamma\lambda^{2}_{1}-1}\right)^{-i\overline{v(-\lambda_{2})}},\\
H_{3}&=
\frac{\sqrt{2\pi}e^{-\frac{\pi}{2}\overline{v(-\lambda_{3})}}e^{\frac{i\pi}{4}}}
{\sqrt{48\gamma\lambda^{2}_{3}-1}\overline{r_{2}}(-\lambda_{3})\Gamma(-i\overline{v(-\lambda_{3})})}
\left(\frac{1-48\gamma\lambda^{2}_{2}}{4(48\gamma\lambda^{2}_{3}-1)
(48\gamma\lambda^{2}_{1}-1)}\right)^{i\overline{v(-\lambda_{3})}}.
\end{align}
\end{subequations}
\item for $x>0$, based on the value of $\im v(\lambda_{j})$ $(j=1,2,3)$ (here we assume that for all $j=1,2,3$, $\im v(\lambda_{j})$ in the same interval), three possible types asymptotics of $q(x,t)$ are as follows,
\begin{enumerate}[(a)]
\item  $\im v(\lambda_{j})\in I_{1}=(-\frac{1}{2},-\frac{1}{6}]$, $j=1,2,3$,
\begin{align}\label{1-7}
\begin{split}
q(x,t)=&\sum_{s=1}^{3}t^{-\frac{1}{2}+(-1)^{s}\im v(\lambda_{s})}\exp\left\{-2[\chi_{s}+\phi_{s}(\mu,\tau)]-(-1)^{s}i\re v(\lambda_{s})\ln t\right\}N_{s}\\
&+A\delta^{2}(\mu,0)+R_{1}(\mu,t),
\end{split}
\end{align}
\item  $\im v(\lambda_{j})\in I_{2}=(-\frac{1}{6},\frac{1}{6})$, $j=1,2,3$,
\begin{align}\label{1-8}
\begin{split}
q(x,t)=&-\sum_{s=1}^{3}t^{-\frac{1}{2}-(-1)^{s}\im v(\lambda_{s})}\exp\left\{2[\chi_{s}+\phi_{s}(\mu,\tau)]+(-1)^{s}i\re v(\lambda_{s})\ln t\right\}L_{s}\\
&+\sum_{s=1}^{3}t^{-\frac{1}{2}+(-1)^{s}\im v(\lambda_{s})}\exp\left\{-2[\chi_{s}+\phi_{s}(\mu,\tau)]-(-1)^{s}i\re v(\lambda_{s})\ln t\right\}N_{s}\\
&+A\delta^{2}(\mu,0)+R_{3}(\mu,t),
\end{split}
\end{align}
\item  $\im v(\lambda_{j})\in I_{3}=[\frac{1}{6},\frac{1}{2})$, $j=1,2,3$,
\begin{align}\label{1-9}
\begin{split}
q(x,t)=&-\sum_{s=1}^{3}t^{-\frac{1}{2}-(-1)^{s}\im v(\lambda_{s})}\exp\left\{2[\chi_{s}+\phi_{s}(\mu,\tau)]+(-1)^{s}i\re v(\lambda_{s})\ln t\right\}L_{s}\\
&+A\delta^{2}(\mu,0)+R_{2}(\mu,t),
\end{split}
\end{align}
\end{enumerate}
with
\begin{subequations}\label{1-10}
\begin{align}
L_{1}&=
\frac{\sqrt{2\pi}e^{-\frac{\pi}{2}v(\lambda_{1})}e^{\frac{i\pi}{4}}}
{\sqrt{48\gamma\lambda^{2}_{1}-1}r_{1}(\lambda_{1})\Gamma(-iv(\lambda_{1}))}
\left(\frac{1-48\gamma\lambda^{2}_{2}}{4(48\gamma\lambda^{2}_{1}-1)(48\gamma\lambda^{2}_{3}-1)}\right)^{iv(\lambda_{1})},\\
L_{2}&=
\frac{\sqrt{2\pi}e^{-\frac{\pi}{2}\overline{v(\lambda_{2})}}e^{-\frac{i\pi}{4}}}
{\sqrt{1-48\gamma\lambda^{2}_{2}}\overline{r_{1}}(\lambda_{2})\Gamma(i\overline{v(\lambda_{2})})}
\left(\frac{1}{4(48\gamma\lambda^{2}_{3}-1)}\right)^{-iv(\lambda_{3})}
\left(\frac{1-48\gamma\lambda^{2}_{2}}{48\gamma\lambda^{2}_{1}-1}\right)^{-iv(\lambda_{2})},\\
L_{3}&=
\frac{\sqrt{2\pi}e^{-\frac{\pi}{2}v(\lambda_{3})}e^{\frac{i\pi}{4}}}
{\sqrt{48\gamma\lambda^{2}_{3}-1}r_{1}(\lambda_{3})\Gamma(-iv(\lambda_{3}))}
\left(\frac{1-48\gamma\lambda^{2}_{2}}{4(48\gamma\lambda^{2}_{3}-1)
(48\gamma\lambda^{2}_{1}-1)}\right)^{iv(\lambda_{3})},
\end{align}
\end{subequations}
and
\begin{subequations}\label{1-11}
\begin{align}
N_{1}&=
\frac{c^{2}_{0}\sqrt{2\pi}e^{-\frac{\pi}{2}v(\lambda_{1})}e^{-\frac{i\pi}{4}}}
{\sqrt{48\gamma\lambda^{2}_{1}-1}r_{2}(\lambda_{1})\Gamma(iv(\lambda_{1}))\lambda^{2}_{1}}
\left(\frac{1-48\gamma\lambda^{2}_{2}}{4(48\gamma\lambda^{2}_{1}-1)(48\gamma\lambda^{2}_{3}-1)}\right)^{-iv(\lambda_{1})},\\
N_{2}&=
\frac{c^{2}_{0}\sqrt{2\pi}e^{-\frac{\pi}{2}\overline{v(\lambda_{2})}}e^{\frac{i\pi}{4}}}
{\sqrt{1-48\gamma\lambda^{2}_{2}}\overline{r_{2}}(\lambda_{2})\Gamma(-i\overline{v(\lambda_{2})})\lambda^{2}_{2}}
\left(\frac{1}{4(48\gamma\lambda^{2}_{3}-1)}\right)^{-iv(\lambda_{1})}
\left(\frac{1-48\gamma\lambda^{2}_{2}}{48\gamma\lambda^{2}_{1}-1}\right)^{iv(\lambda_{2})},\\
N_{3}&=
\frac{c^{2}_{0}\sqrt{2\pi}e^{-\frac{\pi}{2}v(\lambda_{3})}e^{-\frac{i\pi}{4}}}
{\sqrt{48\gamma\lambda^{2}_{3}-1}r_{2}(\lambda_{3})\Gamma(iv(\lambda_{3}))\lambda^{2}_{3}}
\left(\frac{1-48\gamma\lambda^{2}_{2}}{4(48\gamma\lambda^{2}_{3}-1)
(48\gamma\lambda^{2}_{1}-1)}\right)^{-iv(\lambda_{3})}.
\end{align}
\end{subequations}

There we have the following relations
\begin{align*}
\delta(\mu,0)=\exp\left\{\frac{1}{2\pi i}\left(\int_{-\infty}^{\lambda_{3}}+\int_{\lambda_{2}}^{\lambda_{1}}\right)\frac{\ln(1+r_{1}(s)r_{2}(s))}{s}ds \right\},
\end{align*}
$v(\lambda_{j})$, $j=1,2,3$ can be seen in \eqref{4-7}. Moreover, $\chi_{j}(\xi)$, $j=1,2,3$ have the expressions in \eqref{4-6}. $\phi_{j}(\mu,\tau(\xi))$, $j=1,2,3$ are shown in \eqref{phi-1}, \eqref{phi-2} and \eqref{phi-3}. $\Gamma(\cdot)$ is the Gamma function and the estimates $R_{3}(\mu,t)=R_{1}(\mu,t)+R_{2}(\mu,t)$. The error estimation $R_{1}(\mu,t)$ and $R_{2}(\mu,t)$ are shown in \eqref{1-12} and \eqref{1-13}.
\end{enumerate}
\end{thm}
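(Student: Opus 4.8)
The plan is to assemble the nonlinear steepest descent chain built in Sections~3.1--3.5 and read the asymptotics off the exact representation in Proposition~\ref{prop2}. First I would take \eqref{4-32}, which expresses $q(x,t)$ through the Blaschke--Potapov entries $P_{12},P_{21}$ and through $\lim_{\xi\to\infty}\xi\,\widehat{M}^{r}_{12}$ (for $x>0$) or its conjugate counterpart (for $x<0$), thereby reducing everything to the large-$t$ behavior of the single regular object $\widehat{M}^{r}$. Since $\widehat{M}^{r}=\breve{M}^{r}$ off the disks $D_{\varepsilon}(\lambda_{j})$, the Beals--Coifman representation \eqref{4-76}--\eqref{4-77} of RH Problem~\ref{rhp6} supplies the three quantities I need --- $\lim_{\xi\to\infty}\xi(\widehat{M}^{r}-I)$, $\widehat{M}^{r}(x,t,0)$ and $\widehat{M}^{r}(x,t,i\xi_{1})$ --- as the single integral $-\tfrac{1}{2\pi i}\int_{\breve{\Upsilon}}\breve{v}w\,ds$ sampled at the corresponding base points.

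The analytic heart is already in place in Section~3.4: near each saddle $\lambda_{j}$ the matrix $\widehat{M}^{r}$ is matched to the parabolic-cylinder parametrix $\widehat{M}^{r}_{\lambda_{j}}$, so $w=\breve{J}-I$ carries on $\partial D_{\varepsilon}(\lambda_{j})$ the explicit simple-pole leading term $\Xi_{j}(\mu,t)/[\sqrt{t}\,(\xi-\lambda_{j})]$ recorded in \eqref{4-62}, \eqref{4-67}, \eqref{4-69}, with $\Xi_{j}$ given by \eqref{4-63}, \eqref{4-68}, \eqref{4-70} and the constants $\beta^{r}_{j},\gamma^{r}_{j}$ fixed through \eqref{4-52}, \eqref{4-54}. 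I would evaluate the residue integrals \eqref{4-78}, producing $\sum_{j}\Xi^{r}_{j}$ with $\Xi^{r}_{j}=-\Xi_{j}/\sqrt{t}$, together with the collected error contributions whose orders are tabulated in \eqref{4-82} and, after splitting into matrix entries, in \eqref{1-12}--\eqref{1-13}. Substituting the resulting values \eqref{4-89}--\eqref{4-90} into the definitions \eqref{4-29} of $u,v$, and then into \eqref{4-28}, yields the explicit expansions \eqref{4-94}--\eqref{4-95} of $P_{12},P_{21}$.

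To finish the $x>0$ branch I would insert \eqref{4-94} and the $(1,2)$ entry of \eqref{4-89} into \eqref{4-32a}. The plateau term $-2\xi_{1}(-ic_{0}/\xi_{1})=2ic_{0}=A\delta^{2}(0,\mu)$ gives the constant background, while the rescaling \eqref{4-96}, which replaces $r^{r}_{l}$ by $r_{l}$, absorbs the explicit $\xi_{1}$-dependence so that the surviving oscillatory pieces are free of $\xi_{1}$: the $(\Xi^{r}_{j})_{12}$ entries assemble into $\sum_{s}L_{s}$ and the $(\Xi^{r}_{j})_{21}$ entries, through the $c_{0}^{2}$ term, into $\sum_{s}N_{s}$, with the powers $t^{-1/2\mp(-1)^{s}\im v(\lambda_{s})}$, the phases $e^{\pm2[\chi_{s}+\phi_{s}]}$ and the factors $t^{\pm(-1)^{s}i\re v(\lambda_{s})}$ read directly off $\Xi_{j}$. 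The three subcases (a)--(c) are then a comparison of decay rates: the $N_{s}$-terms decay like $t^{-1/2+(-1)^{s}\im v(\lambda_{s})}$ and the $L_{s}$-terms like $t^{-1/2-(-1)^{s}\im v(\lambda_{s})}$, and matching these against the error floors of \eqref{1-12}--\eqref{1-13}, of order $t^{-1+2|\im v|}$, produces the threshold $|\im v(\lambda_{s})|=\tfrac16$ (from $-\tfrac12-|\im v|=-1+2|\im v|$); this keeps only the $N_{s}$-sum above $R_{1}$ on $I_{1}$, only the $L_{s}$-sum above $R_{2}$ on $I_{3}$, and both above $R_{3}=R_{1}+R_{2}$ on $I_{2}$, reproducing \eqref{1-7}--\eqref{1-9}. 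For $x<0$ I would instead use \eqref{4-32b} with \eqref{4-37}--\eqref{4-38}: the plateau cancels at leading order, and the symmetry \eqref{4-31}, $\widehat{M}^{r}(x,t,\xi)=\overline{\widehat{M}^{r}(x,t,-\bar{\xi})}$, converts the saddle data at $\lambda_{s}$ into the conjugated quantities $\overline{v(-\lambda_{s})},\overline{\chi_{s}},\overline{\phi_{s}},\overline{r_{2}}(-\lambda_{s})$, delivering \eqref{1-5}--\eqref{1-6}.

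I expect the principal difficulty to be the error bookkeeping rather than any individual estimate. With three stationary points simultaneously active, the residue evaluations and the cross-products among the $\lambda_{1},\lambda_{2},\lambda_{3}$ contributions and the $\xi_{1}$-residue generate many terms of mixed order $t^{-1\pm\im v(\lambda_{j})}$; the delicate step is to consolidate all of them into the piecewise estimates $R_{1},R_{2}$ of \eqref{1-12}--\eqref{1-13} and to verify, uniformly in $\mu$ across the stated ray interval, that in each regime of $\im v$ exactly the advertised oscillatory terms remain above the error floor, with the $\tfrac16$ thresholds sharp.
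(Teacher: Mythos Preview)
Your proposal is correct and follows essentially the same route as the paper: the theorem is the culmination of Sections~3.1--3.5, and you have accurately traced the assembly --- start from \eqref{4-32}, feed in the Beals--Coifman output \eqref{4-89}--\eqref{4-90} via \eqref{4-78}, compute $P_{12},P_{21}$ through \eqref{4-91}--\eqref{4-95}, remove the explicit $\xi_{1}$-dependence with \eqref{4-96}, and sort the surviving oscillatory pieces against the error tables \eqref{1-12}--\eqref{1-13} to obtain the three regimes; the $x<0$ branch comes from \eqref{4-32b} together with the symmetry \eqref{4-31}. Your identification of the $\tfrac16$ threshold and of the error bookkeeping as the main labor is exactly what the paper leaves implicit in the passage between \eqref{4-95}--\eqref{4-96} and the statement of the theorem.
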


\begin{rem}\label{rem1}
For case $x>0$, we only consider the conditions all $\im v(\lambda_{j})$ for $j=1,2,3$ in the same interval, but if they are not in the same interval, for instance, $\im v(\lambda_{j})\in I_{1}(I_{3})$, then the solution $q(x,t)$ possesses the items containing $N_{j}(L_{j})$. If $\im v(\lambda_{j})\in I_{2}$, the solution $q(x,t)$ possesses the items containing $N_{j}$ and the items containing $L_{j}$.
\end{rem}

\begin{rem}\label{rem2}
It can be seen that $A\delta^{2}(\mu,0)\rightarrow A$ as $\lambda_{j}\rightarrow\infty$ for $j=1,2,3$, and thus the asymptotic solutions in \eqref{1-7}-\eqref{1-9} about the boundary condition \eqref{e4b} still hold.
\end{rem}

\begin{rem}\label{rem3}
For the case of pure-step initial data in \eqref{q31}, the two assumptions (I) and (II) of theorem \ref{thm1} can be all satisfied. Furthermore, in this case we have $1+r_{1}(\xi)r_{2}(\xi)=\frac{4\xi^{2}}{4\xi^{2}+A^{2}}$, which means $\im v(\cdot)=0$.
\end{rem}

\section{Conclusion}
In this work, the nonlinear steepest descent method of Deift and Zhou is developed to study the long-time asymptotic behavior of nonlocal Lakshmanan-Porsezian-Daniel equation with step-like initial data: $q_0(x)=o(1)$ as $x\rightarrow-\infty$ and $q_0(x)=A+o(1)$ as $x\rightarrow+\infty$, where $A$ is an arbitrary positive constant. Comparing to the existing results in literature \cite{Peng-Chen-2023}, we upgrade the decaying initial value condition that located in the Schwartz space to the non-decaying initial value condition that has a step-like structure. Besides, the nonlocal condition with symmetries $x\rightarrow-x$ and $t\rightarrow t$ is also considered, the difference is that we give the long-time asymptotic behaviors of the LPD equation as $t\rightarrow+\infty$ and $t\rightarrow-\infty$, separately. At the same time, under the step-like initial data condition, there exist singularities in the original RH problem. To transform it into a regular RH problem, we introduce the BP factor, and this will increase the complexity of our work in the subsequent analyses about the long-time asymptotic behaviors.

Firstly, we make the spectral analysis to the Lax pair of LPD equation and acquire the Volterra integral forms of the eigenfunctions. In the direct scattering part, the analytic, symmetric and asymptotic properties of the eigenfunctions and scattering data are given. It is worth noting that, for the asymptotic properties at singularity point zero, we make the asymptotic expansions to the eigenfunctions by assuming some undetermined functions. Then, the relationships between these undetermined functions are acquired by using Volterra integral equations and the symmetry relations of eigenfunctions, and they are used to represent the expansions of scattering data at zero point. In addition to this, we mention the special case of the scattering matrix under pure-step initial data condition. By observing the scattering matrix in this case, we propose two assumptions about the zero point of the scattering data as case1 and case2. After that, we calculate the expressions for zero point $\xi_{1}$ in two cases. In the inverse scattering part, we construct the RH problem, and obtain the solution of LPD equation by using the solution of RH problem. Secondly, we decompose the jump matrix $J(x,t,\xi)$ into the matrices consist by the upper triangle and lower triangle. To get ride of the intermediate matrix, we introduce the $\delta$ function. Then, we perform the second RH deformation to transform the contour and make the jump matrices decline to identity $I$ for the large-$t$. After that, the BP factor is introduced to transform the RH problem into a regular RH problem. Next, by using the BP matrix and the solution of regular RH problem, we construct the solution of LPD equation and the rough estimate of it as $t\rightarrow\infty$. Then, the regular RH problem can be solved by the parabolic cylinder functions. Through the Beals-coifman theory, we gain the error analysis of regular RH problem. Finally, the long-time asymptotics of the solutions of LPD equation at cases $x>0$ and $x<0$ are attained, respectively.

\section*{Acknowledgements}
This work was supported by the National Natural Science Foundation of China under Grant No. 11975306, the Natural Science Foundation of Jiangsu Province under Grant No. BK20181351, the Six Talent Peaks Project in Jiangsu Province under Grant No. JY-059, the 333 Project in Jiangsu Province, the Fundamental Research Fund for the Central Universities under the Grant No. 2019ZDPY07, and funded  by the Graduate  Innovation  Program of China University of Mining and Technology under Grant No. 2023WLJCRCZL142.

\section{Appendix A: The parabolic cylinder model problem}
This Appendix is to solve the local RH problem at the three saddle points $\lambda_{j}$, $j=1,2,3$. Taking $\lambda_{1}$ as example, we consider the following parabolic cylinder model RH problem.

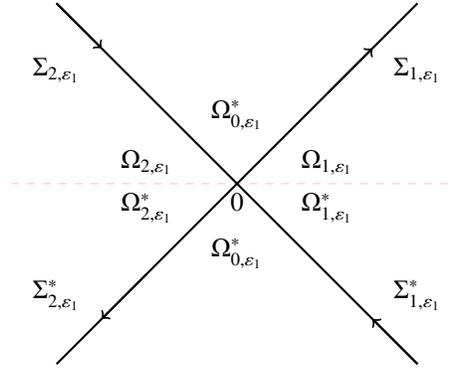
\begin{figure}[H]
      \centering
\begin{tikzpicture}[scale=0.6]
\draw[-][pink,dashed](-5,0)--(5,0);
\draw[-][thick](-4,-4)--(4,4);
\draw[-][thick](-4,4)--(4,-4);
\draw[->][thick](2,2)--(3,3);
\draw[->][thick](-4,4)--(-3,3);
\draw[->][thick](-2,-2)--(-3,-3);
\draw[->][thick](4,-4)--(3,-3);
\draw[fill] (4,3)node[below]{$\Sigma_{1,\varepsilon_{1}}$};
\draw[fill] (4,-3)node[above]{$\Sigma^{*}_{1,\varepsilon_{1}}$};
\draw[fill] (-4,3)node[below]{$\Sigma_{2,\varepsilon_{1}}$};
\draw[fill] (-4,-3)node[above]{$\Sigma^{*}_{2,\varepsilon_{1}}$};
\draw[fill] (0,0)node[below]{$0$};
\draw[fill] (2,0)node[below]{$\Omega^{*}_{1,\varepsilon_{1}}$};
\draw[fill] (2,0)node[above]{$\Omega_{1,\varepsilon_{1}}$};
\draw[fill] (0,-1)node[below]{$\Omega^{*}_{0,\varepsilon_{1}}$};
\draw[fill] (0,1)node[above]{$\Omega^{*}_{0,\varepsilon_{1}}$};
\draw[fill] (-2,0)node[below]{$\Omega^{*}_{2,\varepsilon_{1}}$};
\draw[fill] (-2,0)node[above]{$\Omega_{2,\varepsilon_{1}}$};
\end{tikzpicture}
     \caption{ \footnotesize The contours and domains of jump matrix $J^{pc}_{\lambda_{1}}(\tau)$.}
\end{figure}

\begin{RHP}\label{PC-model}
Find a matrix-value function $\widehat{m}^{pc}_{\lambda_{1}}(\tau):=\widehat{m}^{pc}_{\lambda_{1}}(\lambda_{1},\tau)$
such that
\begin{enumerate}[(i)]
\item $\widehat{m}^{pc}_{\lambda_{1}}(\tau)$ is analytic in $\mathbb{C}\backslash\Sigma^{pc}_{\lambda_{1}}$.
\item Jump conditions:
\begin{align}\label{A1}
\widehat{m}^{pc}_{\lambda_{1},+}(\tau)=
         \widehat{m}^{pc}_{\lambda_{1},-}(\tau)J^{pc}_{\lambda_{1}}(\tau),
         \quad \tau\in\Sigma^{pc}_{\lambda_{1}},
\end{align}
with
\begin{align}\label{A2}
J^{pc}_{\lambda_{1}}(\tau)=\left\{ \begin{aligned}
         &\left(
         \begin{array}{cc}
         1 & -\frac{r_{2}^{r}(\lambda_{1})}{1+r_{1}^{r}(\lambda_{1})r_{2}^{r}(\lambda_{1})}e^{-\frac{i}{2}\tau^{2}}\tau^{2iv(\lambda_{1})}\\
         0 & 1 \\
         \end{array}
         \right)
         ,& \quad &\tau\in \Sigma_{2,\varepsilon_{1}},\\
         &\left(
         \begin{array}{cc}
         1 & 0 \\
         -r_{1}^{r}(\lambda_{1})e^{\frac{i}{2}\tau^{2}}\tau^{-2iv(\lambda_{1})} & 1 \\
         \end{array}
         \right),& \quad &\tau\in \Sigma_{1,\varepsilon_{1}},\\
         &\left(
         \begin{array}{cc}
         1 & r_{2}^{r}(\lambda_{1})e^{-\frac{i}{2}\tau^{2}}\tau^{2iv(\lambda_{1})} \\
         0 & 1 \\
         \end{array}
         \right), & \quad &\tau\in \Sigma^{*}_{1,\varepsilon_{1}},\\
         &\left(
         \begin{array}{cc}
         1 & 0 \\
         \frac{r_{1}^{r}(\lambda_{1})}{1+r_{1}^{r}(\lambda_{1})r_{2}^{r}(\lambda_{1})}
         e^{\frac{i}{2}\tau^{2}}\tau^{-2iv(\lambda_{1})} & 1 \\
         \end{array}
         \right),& \quad &\tau\in \Sigma^{*}_{2,\varepsilon_{1}}.\\
                          \end{aligned} \right.
\end{align}
\item Normalization condition at $\tau=\infty$:
\begin{align}\label{A3}
\widehat{m}^{pc}_{\lambda_{1}}(\tau)=I+
\frac{(\widehat{m}^{pc}_{\lambda_{1}})_{1}}{\tau}+
O(\frac{1}{\tau^{2}}), \quad \tau\rightarrow\infty.
\end{align}
\end{enumerate}
\end{RHP}

The jump contours and domains of jump matrix $J^{pc}_{\lambda_{1}}(\tau)$ are shown in Figure 7. As we know, the parabolic cylinder model RH problem $\widehat{m}^{pc}_{\lambda_{1}}(\tau)$ has the form of Webber equation
\begin{align*}
\left(\frac{\partial^{2}}{\partial \tau^{2}}+(\frac{1}{2}-\frac{\tau^{2}}{2}+a)\right)D_{a}(\tau)=0.
\end{align*}

Then we make the transformation to obtain the explicit solution of $\widehat{m}^{pc}_{\lambda_{1}}(\tau)$
\begin{align}\label{A4}
\widehat{m}^{pc}_{\lambda_{1}}(\tau)=m_{\lambda_{1}}(\tau)\mathcal {P}\tau^{-iv(\lambda_{1})\sigma_3}e^{\frac{i}{4}\tau^2\sigma_3},
\end{align}
where
\begin{align*}
\mathcal{P}=\left\{\begin{aligned}
&\left(
                    \begin{array}{cc}
                      1 & 0 \\
         -r^{r}_{1}(\lambda_{1}) & 1 \\
                    \end{array}
                  \right),& \quad &\tau\in\Omega_{1},\\
&\left(
                    \begin{array}{cc}
                      1 & -\frac{r^{r}_{2}(\lambda_{1})}{1+r^{r}_{1}(\lambda_{1})r^{r}_{2}(\lambda_{1})} \\
                    0 & 1 \\
                    \end{array}
                  \right),& \quad &\tau\in\Omega_{2},\\
&\left(
                    \begin{array}{cc}
                      1 & 0 \\
         \frac{r^{r}_{1}(\lambda_{1})}{1+r^{r}_{1}(\lambda_{1})r^{r}_{2}(\lambda_{1})} & 1 \\
                    \end{array}
                  \right),& \quad &\tau\in\Omega^{*}_{2},\\
&\left(
                    \begin{array}{cc}
                     1 & r^{r}_{2}(\lambda_{1}) \\
                     0 & 1 \\
                    \end{array}
                  \right),&\quad &\tau\in\Omega^{*}_{1},\\
&~~~\mathbb{I},& \quad &\tau\in\Omega_{0}\cup\Omega^{*}_{0}.
\end{aligned}\right.
\end{align*}
The $2\times2$ matrix-valued function $m_{\lambda_{1}}(\tau)$ admits the following RH problem
\begin{RHP}\label{RHPA2}
Find a matrix-value function $m_{\lambda_{1}}(\tau)$
such that
\begin{enumerate}[(i)]
\item $m_{\lambda_{1}}(\tau)$ is analytic in $\mathbb{C}\backslash \mathbb{R}$.
\item Jump condition:
\begin{align}\label{A5}
m_{\lambda_{1},+}(\tau)=
         m_{\lambda_{1},-}(\tau)J_{1}(\lambda_{1}),
         \quad \tau\in\mathbb{R},
\end{align}
with
\begin{align}\label{A6}
J_{1}(\lambda_{1})=\left(
         \begin{array}{cc}
         1+r^{r}_{1}(\lambda_{1})r^{r}_{2}(\lambda_{1}) & -r^{r}_{2}(\lambda_{1}) \\
         -r^{r}_{1}(\lambda_{1}) & 1 \\
         \end{array}
         \right).
\end{align}
\item Asymptotic behavior:
\begin{align}\label{A7}
m_{\lambda_{1}}(\tau)&=\left(I+
\frac{(\widehat{m}^{pc}_{\lambda_{1}})_{1}}{\tau}+
O(\frac{1}{\tau^{2}})\right)
e^{-\frac{i}{4}\tau^{2}\sigma_{3}}\tau^{iv(\lambda_{1})\sigma_{3}},\quad\tau\rightarrow\infty.
\end{align}
\end{enumerate}
\end{RHP}
Using the fact that $\frac{i}{2}\tau\sigma_3 m_{\lambda_{1},+}=\frac{i}{2}\tau\sigma_3 m_{\lambda_{1},-}J_{1}(\lambda_{1})$, differentiating \eqref{A5} with respect to $\tau$ yields
\begin{align}\label{A8}
\left(\frac{d m_{\lambda_{1}}}{d\tau}+\frac{i}{2}\tau\sigma_3m_{\lambda_{1}}\right)_{+}=
\left(\frac{d m_{\lambda_{1}}}{d\tau}+\frac{i}{2}\tau\sigma_3m_{\lambda_{1}}\right)_{-}J_{1}(\lambda_{1}).
\end{align}
The condition $\det J_{1}(\lambda_{1})=1$ indicates that $\det m_{\lambda_{1},+}=\det m_{\lambda_{1},-}$. By using the painlev\'{e} extension theorem, it can be verified that $\left(\frac{d m_{\lambda_{1}}}{d\tau}+\frac{i}{2}\tau\sigma_3 m_{\lambda_{1}}\right)m_{\lambda_{1}}^{-1}$ is analytic in the whole plane. Taking \eqref{A4} into account, after direct calculation, we have
\begin{align}\label{A9}
\left(\frac{d m_{\lambda_{1}}}{d\tau}+\frac{i}{2}\tau\sigma_3 m_{\lambda_{1}}\right)m_{\lambda_{1}}^{-1}=\left(\frac{d \widehat{m}^{pc}_{\lambda_{1}}}{d\tau}+\widehat{m}^{pc}_{\lambda_{1}}\frac{iv(\lambda_{1})\sigma_3}{\tau}
\right)(\widehat{m}^{pc}_{\lambda_{1}})^{-1}+\frac{i}{2}\tau[\sigma_3,(\widehat{m}^{pc}_{\lambda_{1}})_{1}]
(\widehat{m}^{pc}_{\lambda_{1}})^{-1}.
\end{align}
By the Liouville theorem, we obtain that $\left(\frac{d m_{\lambda_{1}}}{d\tau}+\frac{i}{2}\tau\sigma_3 m_{\lambda_{1}}\right)m_{\lambda_{1}}^{-1}$ is a constant matrix, then there exists a constant matrix $B$ such that
\begin{align}\label{A10}
B=\frac{i}{2}\tau[\sigma_3,(\widehat{m}^{pc}_{\lambda_{1}})_{1}]=\left(
         \begin{array}{cc}
         0 & i(\widehat{m}^{pc}_{\lambda_{1}})^{12}_{1} \\
         -i(\widehat{m}^{pc}_{\lambda_{1}})^{21}_{1} & 0 \\
         \end{array}
         \right)=
\left(
         \begin{array}{cc}
         0 & \beta^{r}_{1}(\lambda_{1}) \\
         \gamma^{r}_{1}(\lambda_{1}) & 0 \\
         \end{array}
         \right).
\end{align}
Then we have
\begin{align}\label{A11}
\frac{d m_{\lambda_{1}}}{d\tau}+\frac{i}{2}\tau\sigma_3 m_{\lambda_{1}}=B m_{\lambda_{1}},
\end{align}
expending the above equation on the upper half-plane, after calculation, it can be concluded that
\begin{subequations}\label{A12}
\begin{align}
&(m_{\lambda_{1}})_{11}^{''}+\left(\frac{i}{2}+\frac{\tau^2}{4}-\gamma^{r}_{1}
\beta^{r}_{1}\right)(m_{\lambda_{1}})_{11}=0,~~~
(m_{\lambda_{1}})_{21}^{''}+\left(-\frac{i}{2}+\frac{\tau^2}{4}-\gamma^{r}_{1}
\beta^{r}_{1}\right)(m_{\lambda_{1}})_{21}=0,\label{A12a}\\
&(m_{\lambda_{1}})_{12}^{''}+\left(\frac{i}{2}+\frac{\tau^2}{4}-\gamma^{r}_{1}
\beta^{r}_{1}\right)(m_{\lambda_{1}})_{12}=0,~~~
(m_{\lambda_{1}})_{22}^{''}+\left(-\frac{i}{2}+\frac{\tau^2}{4}-\gamma^{r}_{1}
\beta^{r}_{1}\right)(m_{\lambda_{1}})_{22}=0.\label{A12b}
\end{align}
\end{subequations}
There we set $a=i\beta^{r}_{1}\gamma^{r}_{1}$, and introduce a new variable $\zeta=\tau e^{-\frac{3\pi i}{4}}$. Let $(m_{\lambda_{1}})_{11}(\tau)=g(\tau e^{-\frac{3\pi i}{4}})$, the equations \eqref{A12} can be written as parabolic cylinder equation reads
\begin{align}\label{A13}
g^{''}(\zeta)+\left(\frac{i}{2}-\frac{\zeta^2}{4}+a\right)g(\zeta)=0.
\end{align}

Thus, in the upper plane, for $0<\arg \tau<\pi$, we have $-\frac{3\pi}{4}<\arg \zeta<\frac{\pi}{4}$. The solution of $m_{\lambda_{1}}$ as $\im \tau>0$ can be expressed as follows
\begin{align}\label{A14}
m_{\lambda_{1}}(\tau)=\left(
              \begin{array}{cc}
                e^{-\frac{3\pi}{4}v(\lambda_1)}D_{iv(\lambda_1)}(\tau e^{-\frac{3\pi i}{4}}) &
                -\frac{iv(\lambda_1)}{\gamma^{r}_1(\lambda_1)}e^{\frac{\pi}{4}(v(\lambda_1)-i)}
                D_{-iv(\lambda_1)-1}(\tau e^{-\frac{\pi i}{4}}) \\
                \frac{iv(\lambda_1)}{\beta^{r}_1(\lambda_1)}e^{-\frac{3\pi}{4}(v(\lambda_1)+i)}
                D_{iv(\lambda_1)-1}(\tau e^{-\frac{3\pi i}{4}}) &
                 e^{\frac{\pi}{4}v(\lambda_1)}D_{-iv(\lambda_1)}(\tau e^{-\frac{\pi i}{4}}) \\
              \end{array}
            \right),
\end{align}
as $\im \tau>0$, we have
\begin{align}\label{A15}
m_{\lambda_{1}}(\tau)=\left(
              \begin{array}{cc}
                e^{\frac{\pi}{4}v(\lambda_1)}D_{iv(\lambda_1)}(\tau e^{\frac{\pi i}{4}}) &
                -\frac{iv(\lambda_1)}{\gamma^{r}_1(\lambda_1)} e^{-\frac{3\pi}{4}(v(\lambda_1)-i)}
                D_{-iv(\lambda_1)-1}(\tau e^{\frac{3\pi i}{4}}) \\
                \frac{iv(\lambda_1)}{\beta^{r}_1(\lambda_1)}e^{\frac{\pi}{4}(v(\lambda_1)+i)}
                D_{iv(\lambda_1)-1}(\tau e^{\frac{\pi i}{4}}) &
                 e^{-\frac{3\pi}{4}v(\lambda_1)}D_{-iv(\lambda_1)}(\tau e^{\frac{3\pi i}{4}}) \\
              \end{array}
            \right).
\end{align}
According to \eqref{A5}, we have
\begin{align}\label{A16}
\begin{split}
-r^{r}_{1}(\lambda_1)&=(m_{\lambda_{1}})^{-}_{11}(m_{\lambda_{1}})^{+}_{21}
-(m_{\lambda_{1}})^{+}_{11}(m_{\lambda_{1}})^{-}_{21}\\
&=e^{\frac{\pi}{4}v(\lambda_1)}D_{iv(\lambda_1)}(\tau e^{-\frac{3\pi i}{4}})\frac{e^{-\frac{3\pi v(\lambda_1)}{4}}}{\beta^{r}_1(\lambda_1)}\left[\partial_{\tau}\left(D_{iv(\lambda_1)}(\tau e^{-\frac{3\pi i}{4}})\right)+\frac{i\tau}{2}D_{iv(\lambda_1)}(\tau e^{-\frac{3\pi i}{4}})\right]  \\
&-e^{-\frac{3\pi v(\lambda_1)}{4}}D_{iv(\lambda_1)}(\tau e^{-\frac{3\pi i}{4}})
\frac{e^{\frac{\pi v(\lambda_1)}{4}}}{\beta^{r}_1(\lambda_1)}\left[\partial_{\tau} \left(D_{iv(\lambda_1)}(\tau e^{\frac{\pi i}{4}})\right)+\frac{i\tau}{2}D_{iv(\lambda_1)}(\tau e^{\frac{\pi i}{4}})\right] \\
&=\frac{e^{-\frac{\pi v(\lambda_1)}{2}}}{\beta^{r}_1(\lambda_1)}\frac{\sqrt{2\pi}e^{\frac{\pi i}{4}}
}{\Gamma(-iv(\lambda_1))},
\end{split}
\end{align}
\begin{align}\label{A17}
\begin{split}
-r^{r}_{2}(\lambda_1)&=(m_{\lambda_{1}})^{-}_{22}(m_{\lambda_{1}})^{+}_{12}
-(m_{\lambda_{1}})^{+}_{22}(m_{\lambda_{1}})^{-}_{12}\\
&=e^{-\frac{3\pi}{4}v(\lambda_1)}D_{-iv(\lambda_1)}(\tau e^{\frac{3\pi i}{4}})\frac{e^{\frac{\pi v(\lambda_1)}{4}}}{\gamma^{r}_1(\lambda_1)}\left[\partial_{\tau}\left(D_{-iv(\lambda_1)}(\tau e^{-\frac{\pi i}{4}})\right)-\frac{i\tau}{2}D_{-iv(\lambda_1)}(\tau e^{-\frac{\pi i}{4}})\right]  \\
&-e^{\frac{\pi v(\lambda_1)}{4}}D_{-iv(\lambda_1)}(\tau e^{-\frac{\pi i}{4}})
\frac{e^{-\frac{3\pi v(\lambda_1)}{4}}}{\gamma^{r}_1(\lambda_1)}\left[\partial_{\tau} \left(D_{-iv(\lambda_1)}(\tau e^{\frac{3\pi i}{4}})\right)-\frac{i\tau}{2}D_{-iv(\lambda_1)}(\tau e^{\frac{3\pi i}{4}})\right] \\
&=\frac{e^{-\frac{\pi v(\lambda_1)}{2}}}{\gamma^{r}_1(\lambda_1)}\frac{\sqrt{2\pi}e^{-\frac{\pi i}{4}}
}{\Gamma(iv(\lambda_1))}.
\end{split}
\end{align}
Then
\begin{align}\label{A18}
\beta^{r}_1(\lambda_1)=-\frac{\sqrt{2\pi}e^{-\frac{\pi v(\lambda_1)}{2}}e^{\frac{\pi i}{4}}}{r^{r}_{1}(\lambda_1)\Gamma(-iv(\lambda_1))},\quad
\gamma^{r}_1(\lambda_1)=-\frac{\sqrt{2\pi}e^{-\frac{\pi v(\lambda_1)}{2}}e^{-\frac{\pi i}{4}}}{r^{r}_{2}(\lambda_1)\Gamma(iv(\lambda_1))},
\end{align}
there $\Gamma(\cdot)$ denotes the gamma function, and
\begin{align}\label{A19}
\widehat{m}^{pc}_{\lambda_{1}}(\tau)=I+\frac{i}{\tau}
\left(
         \begin{array}{cc}
         0 & \beta^{r}_{1}(\lambda_{1}) \\
         -\gamma^{r}_{1}(\lambda_{1}) & 0 \\
         \end{array}
         \right)+O(\tau^{-2}), \quad \tau\rightarrow\infty.
\end{align}

\renewcommand{\baselinestretch}{1.2}

\end{document}